\tikzstyle{printersafe}=[snake=snake,segment amplitude=0 pt]
\newtheorem{proposition}{\em Proposition}
\newtheorem{theorem}{\em Theorem}
\newtheorem{conjecture}{\em Conjecture}
\newtheorem{lemma}{\em Lemma}
\newtheorem{claim}{\em Claim}
\newtheorem{claimprime}{\em Claim}
\newtheorem{remark}{\em Remark}
\newtheorem{corollary}{\em Corollary}
\newtheorem{example}{\em Example}
\journal{Latex Templates}
\begin{document}

\begin{frontmatter}

\title{Expanding vertices to triangles in cubic graphs}

\author[label2]{Giuseppe Mazzuoccolo}
\address[label2]{Department of Physics, Computer Science and Mathematics\\
University of Modena and Reggio Emilia\\
Via Campi 213/a, 41125 Modena, Italy}
\ead{giuseppe.mazzuoccolo@unimore.it}

\author[label1]{Vahan Mkrtchyan}
\address[label1]{Department of Mathematical Sciences\\
Purdue University Fort Wayne (starting Fall 2025)\\
Fort Wayne, IN, USA-46805}

%
\ead{vahan.mkrtchyan@gssi.it}
%
%

\begin{abstract}
Contraction of triangles is a standard operation in the study of cubic graphs, as it reduces the order of the graph while typically preserving many of its properties. In this paper, we investigate the converse problem, wherein certain vertices of cubic graphs are expanded into triangles to achieve a desired property. We first focus on bridgeless cubic graphs and define the  parameter $T(G)$ as the minimum number of vertices that need to be expanded into triangles so that the resulting cubic graph can be covered with four perfect matchings. We relate this parameter to the concept of shortest cycle cover. Furthermore, we show that if $5$-Cycle Double Cover Conejcture holds true, then $T(G)\leq \frac{2}{5} |V(G)|$. We conjecture a tighter bound, $T(G)\leq \frac{1}{10}|V(G)|$, which is optimal for the Petersen graph, and show that this bound follows from major conjectures like the Petersen Coloring Conjecture.
In the second part of the paper, we introduce the parameter $t(G)$ as the minimum number of vertex expansions needed for the graph to admit a perfect matching. We prove a Gallai type identity: $t(G)+\ell(G)=|V(G)|$, where $\ell(G)$ is the number of edges in a largest even subgraph of $G$. Then we prove the general upper bound $t(G)< \frac{1}{4}|V(G)|$ for cubic graphs, and $t(G)< \frac{1}{6}|V(G)|$ for cubic graphs without parallel edges. We provide examples showing that these bounds are asymptotically tight. The paper concludes with a discussion of the computational complexity of determining these parameters.
\end{abstract}

\begin{keyword}
Matching \sep cubic graph \sep perfect matching \sep 2-factor \sep triangle.
\MSC[2020] 05C70 \sep 05C15.
\end{keyword}

\end{frontmatter}



\section{Introduction}
\label{IntroSection}

Many important conjectures in graph theory, such as the Cycle Double Cover Conjecture or 5-Flow Conjecture, can be reduced to the case of cubic graphs, graphs in which every vertex has degree three. We mean that proving a conjecture for cubic graphs is enough to establish it in its general formulation. When working with the conjectures mentioned above, triangles are often considered a trivial case, since their contraction reduces the size and complexity of the graph. In this paper, we consider the reverse approach: instead of contracting triangles, we study what happens when we expand certain vertices of a cubic graph into triangles. Our goal is to understand whether such an expansion can enforce a desired property that the original graph may not have satisfied.

In this paper, we consider finite, undirected graphs that do not contain loops. However, they may contain parallel edges. If they do not contain parallel edges, then we will refer to them as simple graphs. 

For a bridgeless cubic graph $G$, we introduce the parameter $T(G)$ as the minimum number of vertices that we need to expand into triangles so that the edge-set of the resulting cubic graph can be covered with four perfect matchings. In some sense our parameter measures how far is our graph from being coverable with four perfect matchings. Similar measures for other properties of cubic graphs are presented in \cite{steffen:2004} and \cite{fiol}. 
A strong motivation for studying the parameter $T(G)$ is that, if it can be shown that $T(G)$ is finite for every cubic graph, this would imply the $5$-CDC Conjecture (see \cite{HakobyanAMC} and Conjecture \ref{conj:ClawFree4Coverable}).
We relate the parameter $T(G)$ to the length of a shortest cycle cover of the graph $G$. Moreover, we show that if $5$-Cycle Double Cover Conjecture is verified, then $T(G)\leq \frac{2}{5}|V|$ holds. Anyway, this bound does not seem tight. In the paper we offer a conjecture that $T(G)\leq \frac{1}{10}|V|$. To evaluate the plausibility of our conjecture, observe that the bound is asymptotically tight, as evidenced by the Petersen graph, and we demonstrate that it follows as a consequence of several established conjectures.

In Section \ref{CubicResultsSection}, we introduce another parameter $t(G)$, defined as the minimum number of vertices in an arbitrary cubic graph $G$ that need to be expanded into triangles so that the resulting graph admits a perfect matching. In connection with this parameter, we preliminary show that for every cubic graph $G$ the identity $t(G)+\ell(G)=|V(G)|$ holds, where $\ell(G)$ denotes the number of edges in a largest even subgraph of $G$. We then use this equality to show that $t(G)<\frac{1}{4}|V(G)|$ for an arbitrary cubic graph, and $t(G)< \frac{1}{6}|V(G)|$ for simple cubic graphs. These two bounds are complemented by examples demonstrating their tightness.

We conclude the paper in Section \ref{sec:complexity} by discussing the computational complexity of determining both parameters.

\section{Notations and auxiliary results}
\label{AuxSection}
This section is devoted to introducing the notation and definitions that will be used throughout the paper. We also recall all known results that will play a role in the forthcoming sections. Non-defined terms and concepts can be found, for instance, in \cite{west:1996}.
  
If $G$ is a graph, then let $V(G)$ and $E(G)$ denote the sets of vertices and edges of $G$, respectively. When the graph is clear from context, we simply write $V$ and $E$.


If $G$ is a graph, then a block of $G$ is an inclusion-wise maximal $2$-connected subgraph of $G$. If $B$ is a block in $G$ and it contains at most one cut-vertex, then $B$ will be called an \emph{end-block}. Let $G$ be a cubic graph with bridges and let $B$ be an end-block of $G$, there is a unique cut-vertex $y$ of $G$ in $B$ that is adjacent to a unique vertex $x$ lying outside $B$. We will refer along the paper to $x$ as the root of $B$. 

In Section \ref{CubicResultsSection} a special role will be played by the graph with three vertices obtained from a triangle by duplicating one of its edges (see the three end-blocks of $S_{10}$ from Figure \ref{fig:SylvGraph}). From now on, we will denote by $W$ such a graph. Similarly, $W'$ will denote the unique graph obtained from a complete graph $K_4$ by subdividing one of its edges exactly once.

If $G$ is a cubic graph and $e=uv$ is an edge in it, then the following operation will be relevant: subdivide the edge $e=uv$ with a new vertex $w_e$, add a copy of $W$ and join the unique degree-two vertex of $W$ to $w_e$ (Figure \ref{fig:OperationGraphs}). Note that the resulting graph is cubic, the added copy of $W$ is an end-block in it whose root is $w_e$. We often say that {\it we subdivide $e$ and attach a copy of $W$ to it.} We will define a similar operation when instead of $W$ we work with $W'$. 

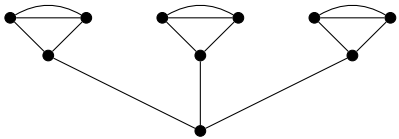
\begin{figure}[ht]
  
  \begin{center}
	
	\tikzstyle{every node}=[circle, draw, fill=black!50,
                        inner sep=0pt, minimum width=4pt]
	
		\begin{tikzpicture}
																							
			\node[circle,fill=black,draw] at (-5.5,-1) (n1) {};
			
			
			
			

			\node[circle,fill=black,draw] at (-6, -0.5) (n2) {};
																								
			\node[circle,fill=black,draw] at (-5,-0.5) (n3) {};
																								
			\node[circle,fill=black,draw] at (-3.5,-1) (n4) {};
																								
			\node[circle,fill=black,draw] at (-4, -0.5) (n5) {};
																								
			\node[circle,fill=black,draw] at (-3,-0.5) (n6) {};
																								
			\node[circle,fill=black,draw] at (-1.5,-1) (n7) {};
																								
			\node[circle,fill=black,draw] at (-2, -0.5) (n8) {};
																								
			\node[circle,fill=black,draw] at (-1,-0.5) (n9) {};
																								
			\node[circle,fill=black,draw] at (-3.5,-2) (n10) {};

			\path[every node]
			(n1) edge  (n2)

			edge  (n3)
			edge (n10)
																								   	
			(n2) edge (n3)
			edge [bend left] (n3)
																								       
			(n3) 
			(n4) edge (n5)
			edge (n6)
			edge (n10)
																								    
			(n5) edge (n6)
			edge [bend left] (n6)
			(n6)
																								   
			(n7) edge (n8)
			edge (n9)
			edge (n10)
																								    
			(n8) edge (n9)
			edge [bend left] (n9)
																								  
			;
		\end{tikzpicture}
																
	\end{center}
								
	\caption{The graph $S_{10}$.}
	\label{fig:SylvGraph}

\end{figure}

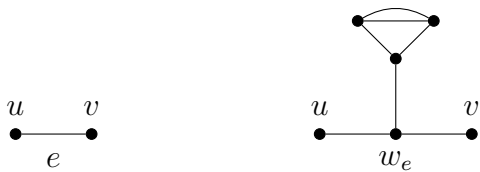
\begin{figure}[ht]
  
  \begin{center}

		\begin{tikzpicture}

            \node at (-10, -2.35) {$e$};

            \node at (-10.5, -1.65) {$u$};
            \node at (-9.5, -1.65) {$v$};

            \node at (-6.5, -1.65) {$u$};
            \node at (-4.5, -1.65) {$v$};
            \node at (-5.5, -2.35) {$w_e$};

            \tikzstyle{every node}=[circle, draw, fill=black!50,
                        inner sep=0pt, minimum width=4pt]
																							
			\node[circle,fill=black,draw] at (-5.5,-1) (n1) {};
			
			
			
			

			\node[circle,fill=black,draw] at (-6, -0.5) (n2) {};
																								
			\node[circle,fill=black,draw] at (-5,-0.5) (n3) {};

                \node[circle,fill=black,draw] at (-5.5,-2) (n4) {};

                \node[circle,fill=black,draw] at (-6.5,-2) (n5) {};
                \node[circle,fill=black,draw] at (-4.5,-2) (n6) {};

                 \node[circle,fill=black,draw] at (-10.5,-2) (n7) {};
                \node[circle,fill=black,draw] at (-9.5,-2) (n8) {};

			\path[every node]
			(n1) edge  (n2)
                edge (n4)

			edge  (n3)
			
																								   	
			(n2) edge (n3)
			edge [bend left] (n3)
                (n4) edge (n5)
                (n4) edge (n6)

                (n7) edge (n8)

			;
		\end{tikzpicture}
																
	\end{center}
								
	\caption{Subdividing $e$ and attaching a copy of $W$ to it.}
	\label{fig:OperationGraphs}

\end{figure}



  


    

 
 


As usual, a circuit in a graph $G$ is a connected 2-regular subgraph. If $C$ is a circuit in $G$, we refer to $|E(C)|$ as the length of $C$. If $C$ is a circuit of length three, then we will call it a triangle. Whereas, in this context, we use the term cycle in a broader sense: a subgraph in which every vertex has positive even degree. In the case of cubic graphs, this definition implies that a cycle is a union of circuits, and the length of a cycle is simply the sum of the lengths of its circuits.
A cycle cover $\mathcal{C}$ of $G$ is a list of cycles in $G$ such that every edge of $G$ belongs to at least one element of $\mathcal{C}$. The length of a cycle cover $\mathcal{C}$ of $G$ is the sum of length of all cycles in $\mathcal{C}$. For a bridgeless graph $G$, $scc(G)$ denote the length of a shortest cycle cover of $G$. 
We define an even subgraph of a graph $G$ as a spanning subgraph in which every vertex has even degree. Under this terminology, even subgraphs may include isolated vertices (i.e., vertices of degree zero), in contrast to cycles, which by definition do not. The complement of an even subgraph is referred to as a parity subgraph of $G$.


In this paper we also need to consider the Petersen Coloring Conjecture of Jaeger and its classical consequences. This conjecture asserts that for every bridgeless cubic graph $G$ its edge-set $E(G)$ can be colored by using as set of colors $E(P_{10})$, where $P_{10}$ is the Petersen graph (Figure \ref{fig:Petersen10}), such that adjacent edges of $G$ receive as colors adjacent edges of $P_{10}$.

   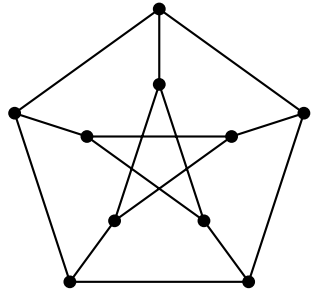
\begin{figure}[ht]
	\begin{center}
	\begin{tikzpicture}[style=thick]
\draw (18:2cm) -- (90:2cm) -- (162:2cm) -- (234:2cm) --
(306:2cm) -- cycle;
\draw (18:1cm) -- (162:1cm) -- (306:1cm) -- (90:1cm) --
(234:1cm) -- cycle;
\foreach \x in {18,90,162,234,306}{
\draw (\x:1cm) -- (\x:2cm);
\draw[fill=black] (\x:2cm) circle (2pt);
\draw[fill=black] (\x:1cm) circle (2pt);
}
\end{tikzpicture}
	\end{center}
	\caption{The Petersen graph $P_{10}$.}\label{fig:Petersen10}
\end{figure}

Here, we introduce such a conjecture in a more formal way. 
Let $G$ and $H$ be two cubic graphs. If there is a mapping $\phi:E(G)\rightarrow E(H)$, such that for each $v\in V(G)$ there is $w\in V(H)$ such that $\phi(\partial_{G}(v)) = \partial_{H}(w)$, then $\phi$ is called an $H$-coloring of $G$. If $G$ admits an $H$-coloring, then we will write $H
\prec G$. It can be easily seen that if $H\prec G$ and $K\prec H$, then $K\prec G$. In other words, $\prec$ is a transitive relation defined on the set of cubic graphs. 

\begin{example}
    If $G$ is the complete bipartite graph $K_{3,3}$ and $H$ is the complete graph $K_4$, then Figure \ref{fig:HcoloringG} shows an example of an $H$-coloring of $G$. Here $V(H)=\{1,2,3,4\}$ and $E(H)=\{a_1, a_2, a_3, a_4, a_5, a_6\}$. Figure \ref{fig:HcoloringG} shows the colors of edges of $G$ with the edges of $H$.

\begin{figure}[ht]
		\begin{center}
			\begin{tikzpicture}[scale=0.85]
			

\def \r {0.1}
\def \radius {\r cm}
\def \c {\r}

\node at (1,-2) {$H$};

\draw[fill=black]  (0,0) circle (\radius);

\draw[fill=black]  (0,2) circle (\radius);

\draw[fill=black]  (2,2) circle (\radius);

\draw[fill=black]  (2,0) circle (\radius);

\draw[-, thick] (0,\r)--(0,2-\r); 
\node at (-0.3,1) {$a_1$};

\draw[-, thick] (\r,2)--(2-\r,2);
\node at (1,2.3) {$a_2$};

\draw[-, thick] (2,2-\r)--(2,\r);
\node at (2.3,1) {$a_3$};

\draw[-, thick] (2-\r,0)--(\r,0);
\node at (1,-0.3) {$a_4$};

\draw[-, thick] (\r/2,\r/2)--(2-\r/2,2-\r/2);
\node at (0.75,0.4) {$a_5$};

\draw[-, thick] (2-\r/2,\r/2)--(\r/2,2-\r/2);
\node at (0.75,1.55) {$a_6$};

\node at (6,-2) {$G$};

\draw[fill=black] (4,0) circle (\radius);

\draw[fill=black] (8,0) circle (\radius);

\draw[fill=black] (6,-1) circle (\radius);

\draw[fill=black] (4,2) circle (\radius);

\draw[fill=black] (8,2) circle (\radius);

\draw[fill=black] (6,3) circle (\radius);


\draw[-, thick] (4,0)--(4,2);
\node at (3.7,1) {$a_1$};

\draw[-, thick] (8,\r)--(8,2-\r);
\node at (8.3,1) {$a_2$};

\draw[-, thick] (4+\r/2,-\r/2)--(6,-1);
\node at (4.7,-0.7) {$a_4$};

\draw[-, thick] (8-\r/2,-\r/2)--(6,-1);
\node at (7.3,-0.7) {$a_3$};

\draw[-, thick] (4+\r/2,2+\r/2)--(6,3);
\node at (4.7,2.7) {$a_4$};

\draw[-, thick] (8-\r/2,2+\r/2)--(6,3);
\node at (7.3,2.7) {$a_3$};


\draw[-, thick] (6,-1)--(6,3);
\node at (6.3,-0.5) {$a_6$};
\node at (6.3,2.5) {$a_6$};

\draw[-, thick] (4+\r/2,\r/2)--(8-\r/2,2-\r/2);
\node at (4.8,0.2) {$a_5$};
\node at (7.15,1.8) {$a_5$};

\draw[-, thick] (4+\r/2,2-\r/2)--(8-\r/2,\r/2);
\node at (7.15,0.2) {$a_5$};
\node at (4.8,1.8) {$a_5$};

			\end{tikzpicture}

		\end{center}
		
		\caption{An example of an $H$-coloring of $G$.}\label{fig:HcoloringG}
	\end{figure}
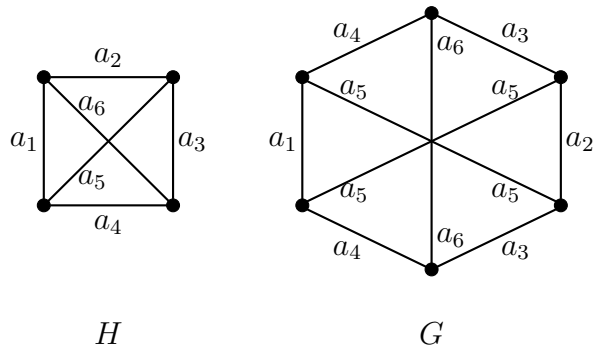

\end{example}

\begin{conjecture}\label{conj:P10conj} (Jaeger, 1988 \cite{Jaeger1979,Jaeger1985,Jaeger1988}) For any bridgeless cubic
graph $G$, one has $P_{10} \prec G$.
\end{conjecture}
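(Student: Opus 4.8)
Since Conjecture \ref{conj:P10conj} is the Petersen Coloring Conjecture, one of the central open problems on cubic graphs, I cannot offer a genuine proof; instead I will describe the reductions I would carry out and the strategy I would pursue, and indicate precisely where the difficulty lies. The plan is to peel away the cases that can be settled, reformulate the remaining core, and attack it by structural induction.

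First I would settle the $3$-edge-colorable case directly. Fix a proper $3$-edge-coloring of $G$ with color classes $M_1, M_2, M_3$, choose any vertex $w \in V(P_{10})$, and let $\partial_{P_{10}}(w) = \{e_1, e_2, e_3\}$. Mapping every edge of $M_i$ to $e_i$ defines $\phi : E(G) \to E(P_{10})$, and since the three edges at any $v \in V(G)$ carry the three distinct colors we get $\phi(\partial_{G}(v)) = \{e_1, e_2, e_3\} = \partial_{P_{10}}(w)$. Hence $P_{10} \prec G$ holds automatically for every $3$-edge-colorable $G$, and the conjecture is open only for \emph{snarks}, the bridgeless cubic graphs that are not $3$-edge-colorable.

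Next I would reduce to the structurally hardest snarks. Since, as noted in the introduction, contracting a triangle preserves $P_{10}$-colorability, a minimal counterexample has girth at least $4$; exploiting the transitivity of $\prec$ together with the usual gluing of $P_{10}$-colorings across $2$- and $3$-edge-cuts, one may further assume it to be cyclically $4$-edge-connected, since any graph with a small cyclic edge-cut can be assembled from $P_{10}$-colorings of smaller bridgeless cubic graphs. For this core case I would switch to Jaeger's equivalent formulation: $P_{10} \prec G$ holds if and only if $G$ admits a \emph{normal} $5$-edge-coloring, that is, a proper edge-coloring with five colors in which every edge $e = uv$ is either \emph{poor} (the four edges meeting $e$ at $u$ and $v$ use only two colors) or \emph{rich} (they use four colors). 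The goal then becomes to produce such a coloring for every nontrivial snark, ideally by locating in each of them a \emph{reducible configuration} that can be deleted or replaced to yield a smaller bridgeless cubic graph, coloring the smaller graph by induction, and extending the normal coloring back across the configuration.

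The hard part --- and the reason the statement remains a conjecture --- is exactly this final step. There is no known unavoidable family of reducible configurations for cyclically $4$-edge-connected snarks, so the induction has no firm base to stand on; worse, normality is a global condition, so even when a local configuration is replaced, re-establishing the rich/poor property of \emph{every} edge simultaneously resists any purely local patching. Any complete proof would have to overcome this tension between the local nature of a reduction and the global nature of a normal coloring, which is precisely what has kept the conjecture open.
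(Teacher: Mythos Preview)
You correctly recognized that this statement is an open conjecture, not a theorem: the paper does not prove it either, but merely states it and uses it as a hypothesis elsewhere (Theorem~\ref{thm:P10implies35EvenSubgraph}, Corollary~\ref{cor:P10T(G)110Bound}). There is therefore no proof in the paper to compare your proposal against, and your observation that no genuine proof can be given is exactly right.

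Your discussion of the easy $3$-edge-colorable case, the reduction to cyclically $4$-edge-connected snarks, and Jaeger's normal $5$-edge-coloring reformulation is accurate and well organized; it goes beyond what the paper says about the conjecture, which simply records that it is hard and lists some of its consequences.
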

The conjecture is well-known in graph theory. It is considered hard to prove since it implies some other classical conjectures in the field such as Berge-Fulkerson Conjecture (Conjecture \ref{conj:BergeFulkerson} below), Cycle Double Cover Conjecture, $5$-Cycle Double Cover Conjecture (Conjecture \ref{conj:52CDC} below) and the Shortest Cycle Cover Conjecture (Conjecture \ref{conj:75conj} below) (see \cite{Fulkerson,Jaeger1985,Zhang1997}).
\begin{conjecture}\label{conj:BergeFulkerson} (Berge-Fulkerson, 1972 \cite{Fulkerson,Seymour}) Any bridgeless
cubic graph $G$ contains six (not necessarily distinct) perfect matchings
$F_1, \ldots , F_6$ such that any edge of $G$ belongs to exactly two of them.
\end{conjecture}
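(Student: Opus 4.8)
Since the statement is one of the central open problems of the field, an unconditional proof is out of reach; the plan is therefore to present the classical conditional derivation, taking as granted the Petersen Coloring Conjecture (Conjecture \ref{conj:P10conj}), which is stated above and which is known to imply the present one. The whole argument rests on a single structural fact about the Petersen graph $P_{10}$: it possesses exactly six perfect matchings, and---because $P_{10}$ is edge-transitive while an incidence count gives $6\cdot 5=30=2\cdot 15$---every edge of $P_{10}$ lies in exactly two of them. In other words, the six perfect matchings of $P_{10}$ themselves already realize a Berge--Fulkerson cover of $P_{10}$. The first step is thus to fix these six matchings $M_1,\dots,M_6$ and record this double-cover property.

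The second step is to transport this cover to an arbitrary bridgeless cubic graph $G$ via a Petersen coloring. Assuming Conjecture \ref{conj:P10conj}, choose $\phi:E(G)\to E(P_{10})$ witnessing $P_{10}\prec G$, and for each $i$ set $F_i=\phi^{-1}(M_i)$. I would first check that each $F_i$ is a perfect matching of $G$: by the definition of an $H$-coloring, for every $v\in V(G)$ the map $\phi$ sends $\partial_{G}(v)$ bijectively onto $\partial_{P_{10}}(w)$ for some vertex $w$ of $P_{10}$; since $M_i$ meets $\partial_{P_{10}}(w)$ in exactly one edge, $F_i$ meets $\partial_{G}(v)$ in exactly one edge, so $F_i$ saturates every vertex and is a perfect matching.

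The third step is the counting that upgrades this to an exact double cover. For an edge $e\in E(G)$ we have $e\in F_i$ if and only if $\phi(e)\in M_i$; by the structural fact from the first step the color $\phi(e)$ belongs to precisely two of the $M_i$, and hence $e$ belongs to precisely two of $F_1,\dots,F_6$. This produces six perfect matchings covering each edge exactly twice, completing the conditional argument. The genuine obstacle is of course the assumption itself: the Petersen Coloring Conjecture is at least as strong as the statement being proved, so this reduction relocates rather than removes the difficulty. Unconditionally, the $3$-edge-colorable case is immediate---given a proper $3$-edge-coloring with color classes $M_1,M_2,M_3$, the list $M_1,M_1,M_2,M_2,M_3,M_3$ already works---so the real content, and the point at which every known approach stalls, is the class of non-$3$-edge-colorable (snark) graphs, for which no unconditional proof is presently available.
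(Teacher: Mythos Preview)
The paper does not prove this statement: it is presented as an open conjecture, and the paper merely records (with references to \cite{Fulkerson,Jaeger1985,Zhang1997}) that it is a known consequence of the Petersen Coloring Conjecture. Your proposal correctly identifies the statement as open and supplies the standard conditional derivation from Conjecture~\ref{conj:P10conj}; the argument you give---pulling back the six perfect matchings of $P_{10}$ along a Petersen coloring and observing that the double-cover property is inherited edge by edge---is exactly the classical one the paper is alluding to, so there is nothing to compare beyond noting that you have spelled out what the paper leaves as a citation.
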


\begin{conjecture}\label{conj:52CDC}
(5-Cycle Double Cover Conjecture, \cite{Celmins1984,Preiss1981}) Any bridgeless
graph $G$ contains five cycles such that any
edge of $G$ belongs to exactly
two of them.
\end{conjecture}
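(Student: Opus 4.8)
This is a long-standing open conjecture, so what follows is the reduction strategy I would pursue together with an honest account of where it breaks down, rather than a complete argument. The plan is to peel away the easy cases and isolate the genuine core, which turns out to be exactly what keeps the problem open. First I would reduce to bridgeless \emph{cubic} graphs: graphs with a bridge are rightly excluded, since a bridge lies in no cycle and hence in no double cover, while for the remaining graphs the standard reductions for cycle double covers --- suppressing degree-two vertices and splitting vertices of degree larger than three --- preserve the existence of a double cover by five even subgraphs, so it suffices to treat the cubic case.

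Next I would dispose of the \emph{$3$-edge-colorable} cubic graphs directly. Fixing a proper $3$-edge-coloring with color classes (perfect matchings) $c_1,c_2,c_3$, the three $2$-factors $F_i=c_j\cup c_k$, taken over $\{i,j,k\}=\{1,2,3\}$, are even subgraphs, and an edge of color $i$ lies in precisely the two factors $F_j,F_k$ with $j,k\neq i$. This already yields a double cover of each edge by three even subgraphs, comfortably within the budget of five; so on this side the conjecture holds outright, and the entire difficulty is concentrated on the non-$3$-edge-colorable cubic graphs (the snarks).

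For the snark case I would try to route through Jaeger's Petersen Coloring Conjecture (Conjecture \ref{conj:P10conj}). The Petersen graph itself admits a double cover of each edge by five even subgraphs, as one checks directly. Moreover, if $\phi:E(G)\to E(P_{10})$ is a Petersen coloring, then the preimage $\phi^{-1}(S)$ of any even subgraph $S$ of $P_{10}$ is again an even subgraph of $G$ --- at a vertex $v$ mapped to $w$ the map $\phi$ restricts to a bijection of the three incident edges, so parity of degree is preserved --- and an edge $e$ with $\phi(e)=f$ lies in $\phi^{-1}(S)$ exactly when $f\in S$. Hence $P_{10}\prec G$ would pull a five-fold double cover of $P_{10}$ back to one of $G$, settling the conjecture.

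The hard part --- and the reason the statement remains a conjecture --- is precisely this final step. Reducing it to $P_{10}\prec G$ only trades the problem for a strictly stronger open conjecture, and no unconditional construction is known that produces five even subgraphs doubly covering an arbitrary snark. The obstruction is structural: a snark admits no nowhere-zero $4$-flow, so the clean coloring argument of the $3$-edge-colorable case is simply unavailable, and controlling a global double cover --- in effect an embedding of the graph in a surface with the cycles as faces --- without such algebraic leverage is exactly what has resisted all attempts.
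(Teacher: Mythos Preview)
Your assessment is exactly right: this is the 5-Cycle Double Cover Conjecture, a famous open problem, and the paper does not prove it either --- it is stated as Conjecture~\ref{conj:52CDC} and used only as a \emph{hypothesis} in conditional results (for instance Theorem~\ref{thm:25|V|UpperBound}: ``If Conjecture~\ref{conj:52CDC} holds true, then $T(G)\le \frac{2}{5}|V(G)|$''). There is no proof in the paper to compare your attempt against.

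Your write-up is an accurate survey of the landscape: the reduction to cubic graphs is standard, the $3$-edge-colorable case is indeed trivial via the three complementary $2$-factors, and the snark case is precisely the unresolved core. Your observation that Petersen coloring pulls back a $5$-CDC of $P_{10}$ to one of $G$ is correct and is exactly why the paper lists Conjecture~\ref{conj:52CDC} among the consequences of Conjecture~\ref{conj:P10conj}. You were right to stop where you did.
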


\begin{conjecture}
\label{conj:75conj} (Shortest Cycle Cover Conjecture, \cite{JT1992,Zhang1997,CQZhangBook2}) Let $G$ be a  bridgeless graph. Then, $scc(G)\leq \frac{7}{5}|E(G)|$ holds.
\end{conjecture}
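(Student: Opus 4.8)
The statement is the Shortest Cycle Cover Conjecture, which is a well-known open problem; accordingly I would not aim at an unconditional proof but would lay out the two routes that seem most natural and indicate precisely where each one stalls. A standard preliminary reduction is to restrict to bridgeless cubic graphs: suppressing degree-two vertices and splitting vertices of degree at least four preserves bridgelessness and does not increase the ratio $scc(G)/|E(G)|$, so the general bound would follow from the cubic case. This is consistent with the paper's recurring theme that cubic graphs are the essential case.

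The cleanest route, and the only one I know that produces exactly the conjectured constant, is conditional on the Petersen Coloring Conjecture (Conjecture \ref{conj:P10conj}), which I am permitted to invoke here. The plan is to fix a Petersen coloring $\phi : E(G) \to E(P_{10})$ and transport a short cover of $P_{10}$ back to $G$. The key structural fact I would establish first is that the preimage $\phi^{-1}(D)$ of any even subgraph $D$ of $P_{10}$ is again an even subgraph of $G$: at a vertex $v \in V(G)$ the three incident edges map bijectively onto $\partial_{P_{10}}(w)$ for some $w$, so the degree of $v$ in $\phi^{-1}(D)$ equals the degree of $w$ in $D$, which is even. Consequently any cycle cover $\{D_1,\dots,D_k\}$ of $P_{10}$ pulls back to a cycle cover $\{\phi^{-1}(D_i)\}$ of $G$ of total length $\sum_{e\in E(P_{10})} m_e\,|\phi^{-1}(e)|$, where $m_e$ counts the $D_i$ containing $e$. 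I would begin from a shortest cover of $P_{10}$, of length $scc(P_{10})=21=\frac{7}{5}|E(P_{10})|$, so that $\sum_e m_e = 21$ and the average of $m_e$ over the fifteen edges is $\frac{7}{5}$.

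The decisive step exploits the edge-transitivity of $P_{10}$. Rather than bound $\sum_e m_e\,|\phi^{-1}(e)|$ for one cover, I would average the pulled-back length over all images $\sigma(\{D_i\})$ with $\sigma\in\mathrm{Aut}(P_{10})$; this replaces $m_e$ by its orbit-average, which by edge-transitivity equals $\frac{7}{5}$ for every $e$ independently of $e$. The expected cover length is then $\sum_e \frac{7}{5}|\phi^{-1}(e)| = \frac{7}{5}|E(G)|$, so some $\sigma$ realizes a cover of length at most $\frac{7}{5}|E(G)|$. This yields the sharp constant and simultaneously explains why both $P_{10}$ and the number $\frac{7}{5}$ are forced upon us.

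Unconditionally, I would fall back on the even-subgraph method of Alon--Tarsi and Bermond--Jackson--Jaeger: a bridgeless graph carries a nowhere-zero flow (Seymour's $6$-flow or Jaeger's $8$-flow theorem), which produces a family of even subgraphs covering every edge a controlled number of times, and optimizing the overlaps recovers $scc(G)\le\frac{5}{3}|E(G)|$, sharpened by later flow arguments to ratios slightly above $1.6$. The main obstacle is exactly the gap between these $\approx 1.6$ bounds and the target $1.4$: the extremal configurations built from $P_{10}$ show $\frac{7}{5}$ cannot be lowered, yet no purely flow- or even-subgraph-based technique is known to attain it without assuming something at least as strong as the Petersen Coloring Conjecture. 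Closing this gap is the genuinely hard part, and it is the reason the conjecture is still open.
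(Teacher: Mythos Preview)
The statement is an open conjecture; the paper does not prove it. It is listed among the classical conjectures (alongside Berge--Fulkerson and $5$-CDC) that are known consequences of the Petersen Coloring Conjecture, with a citation to \cite{Jaeger1985,Zhang1997} rather than an argument. You correctly recognise this and sensibly offer a conditional derivation instead of an unconditional proof.

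Your averaging argument over $\mathrm{Aut}(P_{10})$ is a valid way to obtain $scc(G)\le\frac{7}{5}|E(G)|$ for bridgeless cubic $G$ from a Petersen coloring: the preimage of an even subgraph is indeed even (your degree computation is exactly right), and edge-transitivity forces the expected depth of every edge to be $21/15=7/5$. This is more explicit than anything the paper writes down for this implication; the paper simply cites it.

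Where the paper differs is that its own conditional route to the $\frac{7}{5}$ bound (for cubic graphs) runs through the new parameter $T(G)$: Proposition~\ref{prop:sccT(G)} gives $scc(G)\le\frac{4}{3}|E(G)|+T(G)$, and Conjecture~\ref{conj:T(G)upperbound} ($T(G)\le\frac{1}{10}|V(G)|$) then yields $\frac{4}{3}|E|+\frac{1}{10}|V|=\frac{7}{5}|E|$. The paper derives Conjecture~\ref{conj:T(G)upperbound} from the Petersen Coloring Conjecture via Theorems~\ref{thm:P10implies35EvenSubgraph} and~\ref{thm:35EvenSubgraphImpliesT(G)10}, so both routes ultimately rest on the same hypothesis. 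Your averaging argument is shorter and self-contained; the paper's detour buys a structural interpretation of the defect $scc(G)-\frac{4}{3}|E(G)|$ in terms of triangle expansions, which is the paper's real contribution.

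One caveat: your reduction to cubic graphs is asserted rather than argued. The claim that vertex-splitting ``does not increase the ratio $scc(G)/|E(G)|$'' is not obvious---a splitting adds one edge, and you would need the shortest cover of the split graph to pull back with a saving of at least one unit of length per added edge, which is not automatic. The paper itself is careful to speak only of ``the restriction of Conjecture~\ref{conj:75conj} to cubic graphs'' when drawing this kind of consequence. If you want the general statement, that step needs either a proof or a reference.
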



More conjectures similar to Conjecture \ref{conj:P10conj} can be found in \cite{HakobyanAMC,Mkrt2013}. Recent results about $H$-colorings can be found in \cite{rGraphHColorings} and \cite{HcoloringsRegulars}, where the graphs under consideration are regular, but not necessarily cubic.

We conclude this section with a list of some results that would be used later in the paper.

\begin{proposition}
    \label{prop:31degreeTrees} Let $T$ be a tree in which every vertex is of degree $1$ or $3$. Assume $T$ has $n$ vertices. Let $k_1$ and $k_3$ be the number of vertices with degrees $1$ and $3$, respectively. Then, $k_1=\frac{n}{2}+1$ and $k_3=\frac{n}{2}-1$ hold.
\end{proposition}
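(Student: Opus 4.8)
The plan is to reduce the statement to a pair of linear equations in the two unknowns $k_1$ and $k_3$ and then solve. The two equations come from two elementary facts about $T$: a counting identity on the vertices, and the degree-sum formula combined with the defining property of trees.

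First I would record the obvious partition identity. Since by hypothesis every vertex of $T$ has degree $1$ or $3$, the vertex set splits into exactly these two classes, giving
\begin{equation*}
k_1 + k_3 = n.
\end{equation*}
Next I would invoke two standard results. A tree on $n$ vertices is connected and acyclic, hence has exactly $n-1$ edges. By the handshake lemma, the sum of all vertex degrees equals twice the number of edges, so
\begin{equation*}
\sum_{v \in V(T)} \deg(v) = 2\,|E(T)| = 2(n-1).
\end{equation*}
On the other hand, evaluating the left-hand side using the degree classes gives $1\cdot k_1 + 3\cdot k_3$, so I obtain the second equation $k_1 + 3k_3 = 2n - 2$.

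At this point the proof is a one-line computation: subtracting the first equation from the second yields $2k_3 = n-2$, hence $k_3 = \frac{n}{2} - 1$, and substituting back into $k_1 = n - k_3$ gives $k_1 = \frac{n}{2} + 1$, as claimed. As a side remark I would note that these formulas implicitly confirm that $n$ must be even (so that $\frac{n}{2}$ is the integer it is written as); this is forced by the equation $2k_3 = n-2$, since the left-hand side is even.

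I do not expect any genuine obstacle here, as the argument is purely a two-fact linear system. The only points requiring a modicum of care are citing correctly that a tree has $n-1$ edges and that this, rather than the general inequality $|E| \le \binom{n}{2}$, is what makes the system determined; and checking that the degree-$1$/degree-$3$ dichotomy is used in both equations (once for the vertex count and once for the degree sum). Everything else is routine arithmetic.
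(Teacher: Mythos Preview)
Your proof is correct and is the standard two-equation argument. The paper actually states this proposition without proof, treating it as an elementary fact (and merely citing \cite{samvel:2010,corrigendum} for context), so there is nothing to compare against; your write-up would serve perfectly well as the omitted justification.
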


The trees discussed above have played an important role in \cite{samvel:2010,corrigendum}.
A classical theorem by Petersen can be stated as follows.

\begin{theorem}
    \label{thm:Petersen} (Petersen, see \cite{Sch} or Section 3.4 of \cite{Lovasz})
    \begin{enumerate}
        \item [(a)] Let $G$ be a bridgeless cubic graph. Then $G-e_1-e_2$ has a perfect matching for every $e_1, e_2\in E(G)$;
        
        \item [(b)] Let $G$ be a cubic graph with at most two bridges. Then $G$ has a perfect matching.
    \end{enumerate}
\end{theorem}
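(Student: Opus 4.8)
The plan is to deduce both statements from Tutte's $1$-factor theorem: a graph $H$ admits a perfect matching if and only if $o(H-S)\le |S|$ for every $S\subseteq V(H)$, where $o(H-S)$ denotes the number of connected components of $H-S$ of odd order. The whole argument rests on one parity remark together with one double-counting estimate. Since every cubic graph has an even number of vertices, for any $S$ we have $o(H-S)\equiv |V(H)|-|S|\equiv |S|\pmod 2$; hence $o(H-S)$ and $|S|$ always share the same parity, and it suffices to prove the weaker inequality $o(H-S)\le |S|+1$, which then upgrades automatically to $o(H-S)\le |S|$.

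For part (a), I would set $H=G-e_1-e_2$, fix $S\subseteq V(G)$, write $C_1,\dots,C_q$ for the odd components of $H-S$, and let $\partial_G(C_i)$ be the set of edges of $G$ with exactly one endpoint in $C_i$. First I would record the lower bound $|\partial_G(C_i)|\ge 3$ for each $i$: since each vertex of $C_i$ has degree $3$ in $G$, the count $|\partial_G(C_i)|$ has the same parity as $3|C_i|=|C_i|$, which is odd; and because $G$ is bridgeless no proper edge cut has size $1$, so $|\partial_G(C_i)|\ge 2$, forcing $|\partial_G(C_i)|\ge 3$ (an odd $C_i$ cannot be a whole connected component of $G$, whose order would be even, so $\partial_G(C_i)\neq\emptyset$). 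Next I would classify the edges of $\partial_G(C_i)$: in $H$ every edge leaving $C_i$ must reach $S$, so each edge of $\partial_G(C_i)$ either is an $H$-edge joining $C_i$ to $S$ or is one of the two deleted edges $e_1,e_2$. Summing over $i$, the $H$-edges to $S$ contribute at most $3|S|$ (the total degree available at $S$), while $e_1$ and $e_2$ each lie in at most two of the sets $\partial_G(C_i)$, contributing at most $4$ in total. Thus $3q\le \sum_i|\partial_G(C_i)|\le 3|S|+4$, so $q\le |S|+1$, and the parity remark gives $q\le |S|$. Tutte's condition holds and $H$ has a perfect matching.

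For part (b) I would run the same scheme with $H=G$ itself. Now every edge leaving an odd component $C_i$ goes to $S$, so $|\partial_G(C_i)|$ equals the number of edges between $C_i$ and $S$; it is odd, hence at least $1$. The crucial point is that $|\partial_G(C_i)|=1$ occurs only when the unique external edge of $C_i$ is a bridge of $G$, and distinct odd components yield distinct bridges; since $G$ has at most two bridges, at most two of the $C_i$ satisfy $|\partial_G(C_i)|=1$, while all others satisfy $|\partial_G(C_i)|\ge 3$. Writing $q_1$ and $q_3$ for the number of odd components with $|\partial_G(C_i)|=1$ and $|\partial_G(C_i)|\ge 3$ respectively, double counting gives $q_1+3q_3\le 3|S|$ with $q_1\le 2$; hence $3(q_1+q_3)=(q_1+3q_3)+2q_1\le 3|S|+4$, so $q=q_1+q_3\le |S|+1$, and again parity upgrades this to $q\le |S|$.

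The routine parts are the degree counts; the step that needs the most care is the bookkeeping of the correction term in each case — pinning down that the two deleted edges can be double-counted across components in part (a), and that components with a single boundary edge correspond to \emph{distinct} bridges and therefore number at most two in part (b). Once these are established, the uniform parity argument closes the gap from $|S|+1$ to $|S|$ and yields both conclusions. A minor point to dispatch separately is the possibility that $G$ is disconnected, which is handled by applying the edge-cut bound inside each connected component.
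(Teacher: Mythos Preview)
The paper does not prove this theorem at all: it is stated in Section~2 as a classical result, with references to Sch\"onberger and to Lov\'asz--Plummer, and is used later without proof. So there is no ``paper's own proof'' to compare against.

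Your argument via Tutte's $1$-factor theorem is the standard textbook route and is correct. The parity observation $o(H-S)\equiv |S|\pmod 2$ together with the double count $\sum_i|\partial_G(C_i)|\le 3|S|+4$ is exactly how both parts are usually handled; the bookkeeping you flag (that $e_1,e_2$ can each be counted from two odd components in~(a), and that components with a single boundary edge produce \emph{distinct} bridges in~(b)) is done correctly. One small remark: the final sentence about disconnected $G$ is harmless but unnecessary---your global counting already covers it, since every component of a cubic graph has even order (so $S=\emptyset$ yields $q=0$) and the inequalities $|\partial_G(C_i)|\ge 3$ and $\sum_i|\partial_G(C_i)|\le 3|S|+4$ hold without assuming connectedness.
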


\begin{corollary}
    \label{cor:Endblock2factor} Let $G$ be a bridgeless graph in which all degrees of vertices are three, except one which is of degree two. Then $G$ contains a $2$-factor. In particular, every end-block of a cubic graph contains a $2$-factor. 
\end{corollary}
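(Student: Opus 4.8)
The plan is to reduce the existence of a $2$-factor to the existence of a perfect matching and then invoke Theorem \ref{thm:Petersen}(a). First I would record the following dictionary. Let $w$ be the unique vertex of degree two, with incident edges $e_1=wa$ and $e_2=wb$. A spanning $2$-regular subgraph $F$ of $G$ must use both $e_1$ and $e_2$ (these being the only edges at $w$), and at every other vertex it omits exactly one of the three incident edges; hence $M:=E(G)\setminus F$ is precisely a matching that saturates $V(G)\setminus\{w\}$ and is disjoint from $w$, i.e.\ a perfect matching of $G-w$. Conversely, if $N$ is any perfect matching of $G-w$, then $E(G)\setminus N$ has degree two at every vertex and so is a $2$-factor. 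Thus it suffices to produce a perfect matching of $G-w$.

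Next I would realise $G-w$ as a cubic graph minus one edge. Note first that $a\neq b$: otherwise the two parallel edges $e_1,e_2$ would join $w$ only to $a$, so the third edge at $a$ would be the unique edge leaving $\{w,a\}$ and hence a bridge of $G$, contradicting the hypothesis. Let $G'$ be the cubic graph obtained by \emph{suppressing} $w$, that is, by deleting $w$ and adding the edge $ab$. Then $G'$ is cubic and $G-w=G'-ab$.

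The key step, and the part I expect to require the most care, is to show that $G'$ is bridgeless, so that Theorem \ref{thm:Petersen}(a) applies. I would argue by contradiction: a bridge $f$ of $G'$ determines a partition $V(G')=X\cup\overline{X}$ with $f$ the only crossing edge, and I would pull this cut back to $G$. If $f=ab$, then $a$ and $b$ lie on opposite sides, say $a\in X$ and $b\in\overline{X}$; re-attaching $w$ on the side of $b$ leaves $e_1=wa$ as the unique edge crossing $[X,\overline{X}\cup\{w\}]$ in $G$, so $e_1$ is a bridge of $G$. If $f\neq ab$, then $ab$ does not cross the cut, so $a$ and $b$ lie on the same side, say $X$; placing $w$ on that side shows that $f$ remains the only edge crossing $[X\cup\{w\},\overline{X}]$ in $G$, so $f$ is a bridge of $G$. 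In either case we contradict the bridgelessness of $G$, hence $G'$ is bridgeless.

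It then remains to assemble the pieces. Applying Theorem \ref{thm:Petersen}(a) to the bridgeless cubic graph $G'$ with the two deleted edges chosen to be $ab$ and any edge adjacent to it, I obtain a perfect matching $N$ of $G'$ that avoids $ab$; thus $N\subseteq E(G')\setminus\{ab\}=E(G-w)$ is a perfect matching of $G-w$, and $E(G)\setminus N$ is the desired $2$-factor. (If $G$ is disconnected, I would apply this to the component containing $w$ and, for each remaining component, which is a bridgeless cubic graph, use that the complement of a perfect matching, guaranteed by Theorem \ref{thm:Petersen}(b), is a $2$-factor.) Finally, for the ``in particular'' statement, let $B$ be an end-block of a cubic graph, with cut-vertex $y$ and root $x$. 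Inside $B$ every vertex other than $y$ keeps all three of its edges, while $y$ loses exactly the edge $yx$ leaving $B$; hence $B$ has all degrees three except $y$, which has degree two, and $B$ is $2$-connected and therefore bridgeless. The first part of the corollary then produces a $2$-factor of $B$.
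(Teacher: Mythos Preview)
Your proof is correct and is exactly the natural derivation the paper has in mind: the corollary is stated immediately after Theorem~\ref{thm:Petersen} with no proof, and suppressing the degree-two vertex to obtain a bridgeless cubic graph, then invoking Theorem~\ref{thm:Petersen}(a) to find a perfect matching avoiding the new edge, is the standard way to read it off. Your handling of the side issues (why $a\neq b$, why $G'$ stays bridgeless, the disconnected case, and the end-block specialization) is clean and complete.
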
  

\begin{theorem}
    \label{thm:Steffen4Coverable} (\cite{Steffen2015}) Let $G$ be a bridgeless cubic graph. The edge-set of $G$ can be covered with four perfect matching if and only if 
    \[scc(G)= \frac{4}{3}|E(G)|.\]
\end{theorem}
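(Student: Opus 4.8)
I would split the statement into the universal inequality $scc(G)\ge\frac43|E(G)|$, a description of the extremal cycle covers, and the two implications of the equivalence. First I would record the lower bound, which holds for \emph{every} bridgeless cubic graph. Given a cycle cover $\mathcal C$, write $\mu(e)$ for the number of members of $\mathcal C$ containing $e$, so $\mu(e)\ge 1$ and $\mathrm{length}(\mathcal C)=\sum_e\mu(e)$. At a vertex $v$ with incident edges $e_1,e_2,e_3$, every cycle of $\mathcal C$ (being an even subgraph) meets $v$ in $0$ or $2$ of these edges, so $\mu(e_1)+\mu(e_2)+\mu(e_3)$ is even; as a sum of three positive integers it is therefore at least $4$. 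Summing over all vertices counts each edge twice, giving $2\,\mathrm{length}(\mathcal C)\ge 4|V(G)|$, i.e. $\mathrm{length}(\mathcal C)\ge 2|V(G)|=\frac43|E(G)|$. Equality forces $\mu(e_1)+\mu(e_2)+\mu(e_3)=4$ at every vertex, and the only way to write $4$ as a sum of three positive integers is $(2,1,1)$. Hence in an extremal cover the doubly covered edges $D$ form a perfect matching, the singly covered edges form the complementary $2$-factor $F=E(G)\setminus D$, and exactly two members of $\mathcal C$ pass through each vertex, both of them through its $D$-edge.

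For the implication ``$E(G)$ coverable by four perfect matchings $\Rightarrow scc(G)=\frac43|E(G)|$'', I would first extract the same structure. At a vertex $v$ each colour $i\in\{1,2,3,4\}$ lies on exactly one incident edge (since $M_i$ is a perfect matching), so the four colours are distributed, one per colour, among the three incident edges, and since each edge carries at least one colour the distribution is $(2,1,1)$. Thus every edge lies in one or two of the $M_i$, the doubly coloured edges form a perfect matching $D$, and $F=E(G)\setminus D$ is a $2$-factor carrying a proper $4$-edge-colouring in which the two $F$-edges at $v$ receive exactly the two colours complementary to the pair carried by the $D$-edge at $v$; in particular the two endpoints of any $D$-edge have the same ``colour pair''. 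It then remains to assemble from this coloured data a genuine family of even subgraphs covering $F$ once and $D$ twice, whose total length is automatically $2|V(G)|$, so that the lower bound is attained.

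For the converse implication I would start from an extremal cover, take the decomposition $D$, $F$ above, and reconstruct four matchings. The natural device is the local datum ``which of the two cover-cycles through $v$ contains a given $F$-edge'': this $2$-valued information should be globalised into a proper $4$-edge-colouring of $F$ that is constant on the colour pair across every $D$-edge, after which $M_i=\{e:\ i\text{ lies in the colour set of }e\}$ is forced to be a perfect matching and the four of them cover $E(G)$. Both implications thus funnel through the same intermediate object: a perfect matching $D$ together with a proper $4$-edge-colouring of the $2$-factor $F=E(G)\setminus D$ that is constant-pair across $D$.

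\textbf{The main obstacle} is exactly the passage between this coloured $(D,F)$-structure and the cycle cover, and it is genuinely rigid. The two tempting shortcuts both fail: trying to cover with only the two even subgraphs $D\cup F_X$ and $D\cup F_Y$ forces a splitting of $F$ into two perfect matchings $F_X,F_Y$, which exists only when all circuits of $F$ are even (equivalently $G$ is $3$-edge-colourable); and merely doubling an \emph{arbitrary} perfect matching and decomposing the resulting $4$-regular multigraph into closed trails can produce a trail traversing a doubled edge twice, so it need not yield a list of genuine cycles. That these constructions cannot work for an arbitrary matching is witnessed by the Petersen graph $P_{10}$, every $2$-factor of which is a union of two odd $5$-circuits, for which $scc(P_{10})=21>20=\frac43|E(P_{10})|$, and which is not coverable by four perfect matchings. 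The technical heart is therefore to handle the odd circuits of $F$, routing through the $D$-edges (whose endpoints share a colour pair) so that the odd circuits are ``evened out'' into genuine even subgraphs in one direction and the local cover-cycle data globalises to precisely four matchings in the other; it is here that the colour-pair consistency, rather than the bare existence of a perfect matching, must be used essentially.
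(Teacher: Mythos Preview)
The paper does not give its own proof of this theorem: it is quoted in Section~\ref{AuxSection} as a known result from \cite{Steffen2015} and used as a black box thereafter. So there is no argument in the paper to compare your proposal against.

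As for the proposal itself, it is a plan rather than a proof, and you say as much. Your structural analysis is correct on both sides: the lower bound $scc(G)\ge\frac{4}{3}|E(G)|$ via the vertex--parity count is standard and valid; the conclusion that in an extremal cover the doubly covered edges form a perfect matching $D$ and each cycle alternates $D$/$F$ at every visited vertex is right; and on the other side, the $(2,1,1)$ colour distribution at each vertex for four covering perfect matchings is also correctly derived. Your diagnosis of the two ``tempting shortcuts'' is accurate as well: the two--cycle construction $D\cup F_1$, $D\cup F_2$ needs all circuits of $F$ even, and an arbitrary Eulerian decomposition of $G$ with $D$ doubled may produce a closed trail using both copies of a $D$--edge and hence fail to project to an even subgraph of $G$.

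But the proposal stops precisely at the point that carries all the content. You have reduced both implications to passing through a perfect matching $D$ together with a proper $4$--edge--colouring of the complementary $2$--factor whose colour pair is constant across each $D$--edge, and then you assert that ``it remains to assemble'' a genuine cycle cover from this data and, conversely, to globalise the local cover data into four matchings. Neither assembly is carried out, and neither is routine: for instance, the natural attempt to take two of the symmetric differences $M_i\triangle M_j$ as the cover already leaves the $D$--edges with colour pair $\{i,j\}$ or its complement uncovered, and patching this defect is exactly where the work lies. Until that step is supplied, the proposal is a correct framing of the problem but not a proof; for the completed argument you should consult Steffen's original paper \cite{Steffen2015}.
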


\begin{theorem}
    \label{thm:ParityGraphs} (\cite{Hou2016,Steffen2015}) Let $G$ be a bridgeless graph. Then $G$ admits a cover with five cycles such that each edge is covered twice, if and only if $G$ can be covered with four parity subgraphs such that every edge is covered at most twice.
\end{theorem}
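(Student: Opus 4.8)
The plan is to prove the equivalence by setting up an explicit correspondence between the way edges are distributed among five cycles and the way they are distributed among four parity subgraphs. The natural starting point is the complementation duality: if $C$ is a cycle (even subgraph) then $E(G)\setminus C$ is a parity subgraph and conversely, since complementing changes each vertex degree by $\deg_G(v)$ and hence preserves every degree parity. Applied naively, though, complementing a five-cycle double cover produces five parity subgraphs covering each edge exactly three times, which overshoots the target of four parity subgraphs covering each edge at most twice. So complementation alone will not suffice, and I would instead build a bijection at the level of local coverage patterns, using the complementation duality only as the underlying reason parities can be traded back and forth.

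Concretely, in a five-cycle double cover each edge $e$ lies in exactly two of the five cycles, so it carries a label $\pi(e)\subseteq\{1,\dots,5\}$ with $|\pi(e)|=2$; in a four-parity-subgraph cover in which every edge is used once or twice, each edge carries a label $T(e)\subseteq\{1,2,3,4\}$ with $|T(e)|\in\{1,2\}$. There are exactly ten labels of each kind, and I would connect them by singling out the index $5$: send a singleton $\{c\}$ to the pair $\{c,5\}$, and send a pair $\{i,k\}\subseteq\{1,2,3,4\}$ to its complement $\{1,2,3,4\}\setminus\{i,k\}$. Under this map the requirement ``each edge lies in exactly two cycles'' matches ``each edge lies in one or two parity subgraphs'' by inspection, so the whole content of the theorem reduces to checking that the two families of spanning-degree constraints transfer to one another.

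For that verification I would pass to $\mathbb{F}_2$ and argue one vertex at a time. At a vertex $v$, record for each label the parity of the number of incident edges carrying it. In the cycle picture, evenness of $C_m$ is the linear condition that the sum of these parities over all labels containing $m$ vanishes; in the parity picture, the condition defining $Q_j$ is that the analogous sum equals $\deg_G(v)\bmod 2$. Identifying the ten pair-labels with the edges of $K_5$ on the index set $\{1,\dots,5\}$, all of these sums are governed by the five ``star'' functionals $t_k$ (summation over the $K_5$-edges meeting the vertex $k$), and evenness of the five cycles is exactly the statement that every $t_k$ vanishes at $v$. The correspondence is engineered so that the label set attached to each $Q_j$ differs from the full label set by the edge-cut $\mathrm{cut}(\{j,5\})$ of $K_5$, and every such cut is a sum of star functionals; hence the parity-subgraph conditions follow from the evenness conditions, and running the same computation in reverse gives the converse. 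The main obstacle is not this verification, which is routine $\mathbb{F}_2$ linear algebra, but identifying the correct correspondence in the first place: one must choose the four distinguished vertex sets so that every edge of $K_5$ is separated by either two or three of them (this is what forces the ``at most twice'' bound after complementing) while the associated membership sets simultaneously land in the span of the star functionals (this is what forces the degree parities to transfer), and the symmetric choice $\{j,5\}$ is essentially the only way to satisfy both demands at once.
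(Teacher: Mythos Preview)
The paper does not actually prove Theorem~\ref{thm:ParityGraphs}; it is quoted from \cite{Hou2016,Steffen2015} as a known result, so there is no in-paper proof to compare against. That said, the construction underlying the cited proofs appears explicitly in this paper in the proofs of Theorems~\ref{thm:25|V|UpperBound} and~\ref{thm:35EvenSubgraphImpliesT(G)10}: from a $5$-CDC $(C_0,\dots,C_4)$ one forms the four parity subgraphs $J_i=\overline{C_0\triangle C_i}$, $i=1,\dots,4$, and checks directly that an edge is covered once by the $J_i$ if it lies in $C_0$ and twice otherwise. Your bijection is exactly this construction in disguise: with index $5$ playing the role of $C_0$, an edge with cycle label $\{j,5\}$ gets parity label $\{j\}$, while an edge with cycle label $\{i,k\}\subseteq\{1,2,3,4\}$ gets parity label $\{1,2,3,4\}\setminus\{i,k\}$, and one verifies immediately that $e\in Q_j$ iff $e\in\overline{C_5\triangle C_j}$.

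Your $K_5/\mathbb{F}_2$ repackaging is correct and has the virtue of making the converse transparent. The one step you leave implicit is worth stating: from the four parity conditions $t_j+t_5=0$ ($j=1,\dots,4$) you get $t_1=\cdots=t_5$, and then the identity $t_1+\cdots+t_5=0$ (each $K_5$-edge lies in exactly two stars) forces every $t_m=0$, so all five reconstructed $C_m$ are even. With that line added, your argument is complete and equivalent to the standard one; the $K_5$ framing is a pleasant conceptual bonus rather than a genuinely different route.
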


\section{Expanding Vertices to Reduce the Perfect Matching index}
\label{BridgelessCubicResultsSection}

Let $G$ be a cubic graph and let $U\subseteq V(G)$. The cubic graph obtained from $G$ by expanding the vertices of $U$ to triangles will be denoted by $G_U$. 
For a bridgeless cubic graph $G$, let $T(G)$ be the size of a smallest set $U$ such that the edge-set of the resulting cubic graph $G_U$ can be covered with four perfect matchings. In this section we obtain some bounds on $T(G)$ and we link its value to some well-known conjectures. 
The following remark is an immediate consequence of the definition of a parity subgraph.

\begin{remark}
    \label{prop:ParitySubgraphCubic} Let $G$ be a cubic graph, $J$ be a parity subgraph of $G$ and $U$ be the set of degree-three vertices in $J$. Then $J$ is a perfect matching in $G_U$.
\end{remark} A priori it is unclear why starting with any bridgeless cubic graph and expanding some vertices to triangles, we will obtain a bridgeless cubic graph that can be covered with four perfect matchings. However, we proved that 5-Cycle Double Cover Conjecture is equivalent to the following statement.

\begin{conjecture}
    \label{conj:ClawFree4Coverable} (\cite{HakobyanAMC}) Any claw-free bridgeless cubic graph can be covered with four perfect matchings.
\end{conjecture}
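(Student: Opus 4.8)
The plan is to route the proof through shortest cycle covers and parity subgraphs, while anticipating from the structure below that the fully general claim is as strong as the $5$-Cycle Double Cover Conjecture (Conjecture \ref{conj:52CDC}); I would therefore aim to isolate exactly that difficulty and, as a byproduct, obtain the statement conditionally on Conjecture \ref{conj:52CDC} and unconditionally on every subclass for which the $5$-CDC is already known.

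First I would reduce to a connected bridgeless claw-free cubic graph $G$ and record the elementary structural fact that, in a cubic graph, claw-freeness is equivalent to every vertex lying in a triangle: the vertex $v$ together with its three neighbours induces a claw unless two of those neighbours are adjacent. Consequently the triangles of $G$ partition $V(G)$ into \emph{pure} triangles, carrying three external edges, and \emph{diamonds} $K_4-e$, i.e.\ two triangles sharing an edge and carrying two external edges; the sole exception is $G=K_4$, which is dispatched by hand. Contracting each pure triangle to a single vertex and suppressing each diamond to a single edge yields a bridgeless cubic skeleton $H$ of which $G$ is an inflation. By Theorem \ref{thm:Steffen4Coverable} the target $scc(G)=\frac{4}{3}|E(G)|$ is equivalent to the desired $4$-coverability, so it suffices to produce four perfect matchings of $G$.

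Next I would transport a cover downstairs. By Remark \ref{prop:ParitySubgraphCubic}, perfect matchings of an inflation correspond to parity subgraphs of the contracted graph; pushing this through the triangle/diamond decomposition shows that covering $G$ with four perfect matchings amounts to covering $H$ with four parity subgraphs in which every edge is used at least once and at most twice. Indeed, in a pure triangle the three internal edges are covered precisely when the external corner realises all three positions across the four matchings, which is exactly the requirement that each external edge lie in one or two of the four parity subgraphs. By Theorem \ref{thm:ParityGraphs} this is in turn equivalent to $H$ possessing a cover by five cycles with every edge covered exactly twice, that is, a $5$-cycle double cover. Given such a cover of $H$, reading Remark \ref{prop:ParitySubgraphCubic} in reverse lifts the four parity subgraphs to four perfect matchings of $G$, completing the argument.

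The hard part is structural and unavoidable: because every bridgeless cubic graph occurs as the skeleton $H$ of some claw-free inflation, the reduction is reversible, and the unconditional statement is therefore \emph{equivalent} to the full $5$-Cycle Double Cover Conjecture. I would consequently not pursue an elementary unconditional proof. The realistic deliverables are (i) the conditional theorem \emph{assuming Conjecture \ref{conj:52CDC}, every claw-free bridgeless cubic graph is $4$-coverable}, which the reduction yields at once, and (ii) unconditional confirmation on families where the $5$-CDC is known. Within (ii), the principal technical obstacle is bookkeeping: one must verify that the at-most-twice multiplicity of the parity-subgraph cover is preserved through each diamond suppression, so that the lifted matchings never triple-cover a triangle edge; handling long diamond strings, where successive suppressions interact, is where I expect the care to be concentrated.
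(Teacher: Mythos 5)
This statement is an open conjecture in the paper: no proof is given there, and the surrounding text merely records (citing \cite{HakobyanAMC}) that it is equivalent to the $5$-Cycle Double Cover Conjecture, the equivalence resting on the parity-subgraph characterization of Theorem \ref{thm:ParityGraphs}. Your proposal correctly recognizes exactly this situation, and your triangle/diamond inflation--contraction reduction combined with Theorem \ref{thm:ParityGraphs} is essentially the same equivalence argument as the one the paper cites, so your conditional result and your refusal to attempt an unconditional proof are the right outcome.
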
 
See \cite{HakobyanAMC} for the proof of this equivalence. The proof given there relies on Theorem \ref{thm:ParityGraphs} from \cite{Hou2016}. This means that if 5-Cycle Double Cover Conjecture holds true then $T(G)$ is well-defined and $T(G)\leq |V(G)|$. 

By definition, $T(G)=0$ if and only if the graph $G$ itself admits a cover of the edge-set with at most four perfect matchings. In particular, if $G$ admits a $3$-edge-coloring then $T(G)=0$.

\subsection{Relations between $T(G)$ and $scc(G)$}

Theorem \ref{thm:Steffen4Coverable} suggests a link between $scc(G)$-the length of a shortest cycle cover of $G$ and the parameter $T(G)$. In what follows, we aim to strengthen this connection.

\begin{proposition}
\label{prop:sccT(G)} Let $G$ be a bridgeless cubic graph. Then \[scc(G)\leq \frac{4}{3}|E(G)|+T(G).\]
\end{proposition}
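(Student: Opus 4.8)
The plan is to start from an optimal expansion set, use Theorem~\ref{thm:Steffen4Coverable} to obtain a short cycle cover of the expanded graph, and then contract the triangles back to recover a cycle cover of $G$ whose length I can control. Concretely, I would fix a set $U\subseteq V(G)$ with $|U|=T(G)$ such that $G_U$ is coverable with four perfect matchings. Since $G$ is bridgeless, expanding a vertex into a triangle preserves $2$-edge-connectivity (every edge of $G$ still lies on a circuit of $G_U$ routed through the new triangles, and the triangle edges lie on the triangle circuit), so $G_U$ is again bridgeless and Theorem~\ref{thm:Steffen4Coverable} applies, giving $scc(G_U)=\frac{4}{3}|E(G_U)|$. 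Each expansion replaces one vertex by three and adds three new (triangle) edges, so $|E(G_U)|=|E(G)|+3T(G)$, and hence a shortest cycle cover $\mathcal{C}$ of $G_U$ has length $\frac{4}{3}|E(G)|+4T(G)$.

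Next I would transport $\mathcal{C}$ to $G$ by contracting every expanded triangle back to its original vertex. For a single cycle $C\in\mathcal{C}$, let $C'$ be the subgraph of $G$ obtained by deleting the triangle edges used by $C$ and reattaching the surviving edges to the contracted vertices. The crucial point is that $C'$ is again even: at every untouched vertex the degree is unchanged, while at a contracted vertex $v$ arising from a triangle on $\{a,b,c\}$ the new degree equals the number of the three original edges at $a,b,c$ that lie in $C$; adding the even-degree conditions of $a,b,c$ in $C$ and cancelling the (doubly counted) triangle edges shows that this number is even. Discarding isolated vertices, each $C'$ is therefore a cycle of $G$ (or empty, when $C$ was exactly a contracted triangle). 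Since $\mathcal{C}$ covers all non-triangle edges of $G_U$, which are precisely the edges of $G$, the family $\mathcal{C}'=\{\,C' : C\in\mathcal{C}\,\}$ is a cycle cover of $G$.

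Finally I would bound the length of $\mathcal{C}'$. Passing from $\mathcal{C}$ to $\mathcal{C}'$ removes exactly the triangle-edge occurrences, and since a cover must use each of the $3T(G)$ triangle edges at least once, the total length drops by at least $3T(G)$. Consequently the length of $\mathcal{C}'$, and therefore $scc(G)$, satisfies
\[
scc(G)\le \Big(\tfrac{4}{3}|E(G)|+4T(G)\Big)-3T(G)=\tfrac{4}{3}|E(G)|+T(G),
\]
which is the claimed bound.

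I expect the main obstacle to be the middle step: verifying rigorously that contraction sends the even subgraph $C$ to an even subgraph of $G$ and that no edge of $G$ is lost. The parity bookkeeping at the contracted vertices is the heart of the argument, since it is exactly what guarantees that discarding the triangle edges (and only those) still leaves a legitimate cycle cover of $G$; the length accounting is then immediate.
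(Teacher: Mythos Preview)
Your proposal is correct and follows essentially the same route as the paper. The paper packages the contraction step as the one-line inequality $scc(G')\ge scc(G)+3$ for a single vertex expansion (hence $scc(G_U)\ge scc(G)+3T(G)$), whereas you unpack this same inequality by explicitly contracting the cycle cover and doing the parity check at the contracted vertices; after that, both proofs finish with the identical arithmetic $scc(G)\le \frac{4}{3}|E(G_U)|-3T(G)=\frac{4}{3}|E(G)|+T(G)$.
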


\begin{proof} Let $G$ be any bridgeless cubic graph. If we replace one vertex of $G$ with a triangle, then for the resulting cubic graph $G'$ we will have
\[scc(G')\geq scc(G)+3.\]
Now, if $U$ is a smallest subset of $V(G)$, such that $G_U$ can be covered with four perfect matchings, then we will have
\[scc(G_U)\geq scc(G)+3|U|=scc(G)+3T(G).\]
By Theorem \ref{thm:Steffen4Coverable}, we have
\[scc(G_U)\leq \frac{4}{3}|E(G_U)|,\]
hence
\begin{align*}
    scc(G) &\leq scc(G_U)-3T(G)\leq \frac{4}{3}|E(G_U)|-3T(G)\\
           &= \frac{4}{3}|E(G)|+4T(G)-3T(G)=  \frac{4}{3}|E(G)|+T(G). 
\end{align*}
The proof is complete.
\end{proof}

We are unable to prove that, in the previous statement, the inequality can be replaced by an equality. However, we suspect this may be the case, at least under the additional assumption that $G$ is $3$-edge-connected, and we propose it as a conjecture.

\begin{conjecture}
    \label{conj:sccT(G)} Let $G$ be a $3$-edge-connected cubic graph. Then \[scc(G)= \frac{4}{3}|E(G)|+T(G).\]
\end{conjecture}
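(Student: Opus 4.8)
Since Proposition \ref{prop:sccT(G)} already gives $scc(G)-\frac{4}{3}|E(G)|\le T(G)$, the whole content of Conjecture \ref{conj:sccT(G)} is the reverse inequality
\[T(G)\le scc(G)-\tfrac{4}{3}|E(G)|.\]
Write $\delta(G):=scc(G)-\frac{4}{3}|E(G)|$. Since $\frac{4}{3}|E(G)|=2|V(G)|$ is an integer, $\delta(G)$ is a non-negative integer, and by Theorem \ref{thm:Steffen4Coverable} we have $\delta(G)=0$ exactly when $G$ is coverable with four perfect matchings, i.e. when $T(G)=0$. The plan is to prove $T(G)\le \delta(G)$ by induction on $\delta(G)$. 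The base case $\delta(G)=0$ is precisely Theorem \ref{thm:Steffen4Coverable}, and for the inductive step I would look for a single vertex $v$ whose expansion strictly lowers the excess, $\delta(G_{\{v\}})=\delta(G)-1$, while keeping the graph $3$-edge-connected. Expanding a cubic vertex into a triangle cannot create an edge cut of size less than three, so $G_{\{v\}}$ stays $3$-edge-connected, and since $|E(G_{\{v\}})|=|E(G)|+3$, the requirement $\delta(G_{\{v\}})=\delta(G)-1$ is equivalent to $scc(G_{\{v\}})=scc(G)+3$, the smallest increase permitted by (the proof of) Proposition \ref{prop:sccT(G)}. Iterating $\delta(G)$ times and collecting the expanded vertices into one set $U\subseteq V(G)$ would then give $T(G)\le|U|=\delta(G)$.

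To locate such a vertex I would inspect a shortest cycle cover $\mathcal{C}$ of $G$ near a vertex $v$ with incident edges $e_1,e_2,e_3$. As every cycle has even degree at $v$, each member of $\mathcal{C}$ meets $v$ in exactly one of the pairs $\{e_1,e_2\},\{e_1,e_3\},\{e_2,e_3\}$; let $p_{12},p_{13},p_{23}$ be the corresponding multiplicities, so that $e_i$ is covered $p_{ij}+p_{ik}$ times and $p_{12}+p_{13}+p_{23}$ is the number of cycles through $v$. Expanding $v$ into a triangle $v_1v_2v_3$ (with $e_i$ incident to $v_i$) forces each of these cycles to traverse the triangle from $v_i$ to $v_j$, using either the single edge $v_iv_j$ or the other two. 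A short local calculation shows that all three triangle edges can be covered with a total length increase of only $3$ \emph{precisely} when $p_{12}=p_{13}=p_{23}=1$, i.e. when each of $e_1,e_2,e_3$ is covered exactly twice and the three cycles through $v$ realise the three distinct transitions; routing each such cycle along its short triangle edge then yields a cover of $G_{\{v\}}$ of length $scc(G)+3$, and with Proposition \ref{prop:sccT(G)} this forces $scc(G_{\{v\}})=scc(G)+3$. The crux is therefore the following claim: \emph{every $3$-edge-connected cubic graph with $\delta(G)>0$ possesses a shortest cycle cover and a vertex $v$ of this special type, and $v$ can be chosen among the original vertices of $G$ at each stage.}

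The main obstacle is to establish exactly this claim. A naive averaging argument does not suffice and in fact points the wrong way: since $\sum_{v}(\text{local coverage at }v)=2\,scc(G)\le\frac{10}{3}|E(G)|=5|V(G)|$, by the classical linear bound $scc(G)\le\frac{5}{3}|E(G)|$ for bridgeless graphs, the average local coverage is below $6$, so a typical vertex is covered with total multiplicity $4$, not $6$. But one checks that a vertex of local coverage $4$ necessarily forces $scc(G_{\{v\}})=scc(G)+4$, leaving $\delta$ unchanged; the useful vertices are the rarer ones of local coverage $6$ carrying three distinct transitions, where the excess of the cover is in effect concentrated. Proving that such vertices always exist — and may be taken to be original vertices at every step — seems to demand a genuine understanding of how $\delta(G)$ is distributed over the graph, and this is where $3$-edge-connectivity should be decisive: for merely bridgeless graphs, small edge cuts can trap the excess in positions that no single expansion absorbs, consistent with the conjecture being stated only for the $3$-edge-connected case. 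An alternative route, bypassing the greedy induction, is to build the expansion set directly: by Remark \ref{prop:ParitySubgraphCubic} and Theorem \ref{thm:ParityGraphs}, a cover of $G$ by four parity subgraphs in which every edge appears at most twice would, after expanding the union $U$ of their degree-three vertex sets, turn each parity subgraph into a perfect matching of $G_U$. The difficulty then migrates to bounding $|U|$ by $\delta(G)$ and guaranteeing that all triangle edges are covered, which once more requires a delicate global choice of the four parity subgraphs.
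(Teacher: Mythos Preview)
This statement is labelled and treated in the paper as a conjecture, not a theorem: the authors explicitly say they cannot prove equality and leave it open. What the paper \emph{does} prove (Proposition~\ref{thm:12coverimpliesequality}) is that Conjecture~\ref{conj:sccT(G)} follows from Zhang's Conjecture~\ref{conj:12cover}, which asserts that every $3$-edge-connected graph admits a shortest cycle cover of depth~$2$. So both you and the paper stop short of an unconditional argument; the difference is that the paper identifies a well-known open problem that suffices, whereas you arrive at an ad hoc structural claim you admit you cannot establish.

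The two missing ingredients are in fact closely related. In a depth-$2$ shortest cycle cover of a cubic graph, a parity check at each vertex forces either all three incident edges to have depth~$2$, or exactly one of them. The vertices of the first type---the set $U$ in the paper's proof---are precisely your ``special'' vertices with $p_{12}=p_{13}=p_{23}=1$, and a short count gives $|U|=scc(G)-\frac{4}{3}|E(G)|=\delta(G)$. Rather than peeling off one such vertex at a time, the paper expands \emph{all} of $U$ simultaneously and routes each cycle through the corresponding triangle along its short edge, obtaining a cycle cover of $G_U$ of length exactly $\frac{4}{3}|E(G_U)|$; Theorem~\ref{thm:Steffen4Coverable} then gives $T(G)\le |U|=\delta(G)$ in one stroke. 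This sidesteps both difficulties you flag: there is no need to re-invoke any hypothesis on intermediate graphs, and no issue of the expanded vertex drifting outside $V(G)$.

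Your acknowledged gap---the existence, whenever $\delta(G)>0$, of a shortest cycle cover with a vertex carrying three distinct transitions---is therefore genuine. It is a priori weaker than Zhang's conjecture (you need only one such vertex, not a global depth-$2$ structure), but it is still open, and your averaging remark correctly explains why a pure counting argument cannot locate such a vertex.
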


To support our conjecture, we prove that it follows from another well-known conjecture by C.-Q. Zhang (see \cite{Zhang1997}). To state this conjecture, we first define the depth of an edge $e$ in a cycle cover as the number of cycles containing $e$. Accordingly, we define the depth of a cycle cover as the maximum depth among all edges of $G$.

\begin{conjecture}
    \label{conj:12cover} (Conjecture 8.11.6 from \cite{Zhang1997}) Let $G$ be a $3$-edge-connected graph. Then, $G$ admits a shortest cycle cover of depth $2$.
\end{conjecture}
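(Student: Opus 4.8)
The statement is Zhang's Conjecture~8.11.6, a long-standing open problem, so what follows is the line of attack I would pursue rather than a complete proof. The plan is to recast the question in the language of eulerian weightings and faithful covers. First I would observe that any cycle cover $\mathcal{C}$ determines a depth function $d\colon E(G)\to\mathbb{Z}_{\geq 1}$ which is \emph{eulerian}: at every vertex the sum of the depths of the incident edges is even, because each individual cycle contributes an even amount at each vertex. Conversely, a faithful cover of such a weighting realizes the prescribed depths as a list of circuits. In these terms the conjecture reads: among all eulerian weightings $d\geq 1$ admitting a faithful cover, some minimizer of $\sum_{e}d(e)$---that is, some shortest cycle cover---can be chosen with $d\leq 2$.

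The engine would be an extremal-and-exchange argument. Among all shortest cycle covers I would fix one minimizing a secondary quantity such as the number of edges of depth at least three, or $\sum_{e}\max\{d(e)-2,0\}$. If some edge $e$ still has $d(e)\geq 3$, it lies in at least three cycles, and I would take two of them, $C_1$ and $C_2$. Their symmetric difference $C_1\triangle C_2$ is again a cycle, and since $e\in E(C_1)\cap E(C_2)$ we have $|E(C_1\triangle C_2)|=|E(C_1)|+|E(C_2)|-2|E(C_1\cap C_2)|$. Replacing $\{C_1,C_2\}$ by $C_1\triangle C_2$ drops $d(e)$ by two (so $e$ stays covered, as $d(e)\geq 3$) and does not increase the total length; the only edges that may lose coverage are those of the even subgraph $E(C_1\cap C_2)$, which itself splits into circuits. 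The task is then to re-cover those circuits while keeping every depth at most two and not lengthening the cover, and it is precisely here that the $3$-edge-connectivity of $G$ must be invoked to reroute coverage away from the already heavily used edges.

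The hard part is exactly this re-covering step. Re-adding the uncovered even subgraph can create new depth-three edges elsewhere or force the length up, so the exchange need not decrease the secondary parameter monotonically, and no purely local argument closes the induction. Viewed through the first paragraph, the obstruction is that realizing the minimum $\{1,2\}$-valued eulerian weighting by an honest list of circuits is a faithful-cover problem, and the governing result---the Alspach--Goddyn--Zhang characterization of graphs with the circuit cover property---requires the absence of a Petersen minor. Since the Petersen graph is itself $3$-edge-connected, the obstruction is present even under the hypothesis of the conjecture, which is what makes it resistant. I would therefore expect progress to come by first reducing to the cubic case and then splitting into Petersen-minor-free graphs, where faithful covers are guaranteed, versus graphs carrying a Petersen minor, where one would have to analyze how such a minor can sit inside a shortest cover; making this dichotomy uniform over all $3$-edge-connected graphs is the essential difficulty.
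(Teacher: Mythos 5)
You have correctly recognized that this statement is not a theorem of the paper at all: it is Conjecture 8.11.6 from \cite{Zhang1997}, quoted verbatim and left unproved. The paper never attempts a proof; it only uses the statement as a hypothesis, in Proposition \ref{thm:12coverimpliesequality}, to derive Conjecture \ref{conj:sccT(G)} (the equality $scc(G)=\frac{4}{3}|E(G)|+T(G)$ for $3$-edge-connected cubic graphs). So there is no proof in the paper to compare yours against, and declining to manufacture one was the right call.

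Since you nevertheless outline an attack, a brief assessment: your individual technical assertions are sound. The depth function of a cycle cover is indeed an eulerian weighting; $C_1\triangle C_2$ is again a cycle with $|E(C_1\triangle C_2)|=|E(C_1)|+|E(C_2)|-2|E(C_1\cap C_2)|$, so the exchange never lengthens the cover and only edges of $C_1\cap C_2$ can lose coverage; and the Alspach--Goddyn--Zhang characterization of the circuit cover property does fail exactly on graphs with a Petersen minor, an obstruction that $3$-edge-connectivity cannot remove, since the Petersen graph itself is $3$-edge-connected. But, as you acknowledge, the re-covering step is precisely where the argument stops: re-inserting the circuits of $C_1\cap C_2$ can create new edges of depth three elsewhere or force the length up, so the secondary parameter need not decrease and the induction does not close. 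This is a genuine gap, and it is the gap --- the statement remains open, and your proposal is a research programme rather than a proof; it should be presented as such, not as a completed argument.
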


\begin{proposition}
    \label{thm:12coverimpliesequality}
Conjecture \ref{conj:12cover} implies Conjecture \ref{conj:sccT(G)}.
\end{proposition}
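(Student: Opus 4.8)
The plan is to establish the reverse of the inequality in Proposition \ref{prop:sccT(G)}: assuming Conjecture \ref{conj:12cover}, I would show that $T(G)\le scc(G)-\frac{4}{3}|E(G)|$, which together with Proposition \ref{prop:sccT(G)} yields the claimed equality of Conjecture \ref{conj:sccT(G)}. So fix a $3$-edge-connected cubic graph $G$ and, using Conjecture \ref{conj:12cover}, take a shortest cycle cover $\mathcal{C}$ of $G$ in which every edge has depth $1$ or $2$. Writing $E_2$ for the set of edges of depth $2$ and $E_1$ for those of depth $1$, we have $scc(G)=|E_1|+2|E_2|=|E(G)|+|E_2|$.

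The key observation I would make is that $E_2$ is a parity subgraph of $G$. Indeed, each cycle of $\mathcal{C}$ meets every vertex in even degree, so at each vertex $v$ the sum of the depths of the three incident edges is even; since these depths lie in $\{1,2\}$, the number of depth-$1$ edges at $v$ is even, hence the number of depth-$2$ edges at $v$ is odd (either one or three). Thus $E_2$ has odd degree at every vertex, i.e.\ it is a parity subgraph $J$. Letting $U$ be the set of degree-$3$ vertices of $J$, Remark \ref{prop:ParitySubgraphCubic} tells us that $J$ is a perfect matching of $G_U$, and a short degree count gives $|U|=|E_2|-\frac{1}{2}|V(G)|=scc(G)-\frac{4}{3}|E(G)|$.

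It then remains to verify that $G_U$ is coverable by four perfect matchings, for which I would invoke Theorem \ref{thm:Steffen4Coverable} and prove $scc(G_U)=\frac{4}{3}|E(G_U)|$. The inequality $scc(G_U)\ge \frac{4}{3}|E(G_U)|$ is the classical lower bound for bridgeless cubic graphs, and $G_U$ is indeed bridgeless since expanding a vertex into a triangle preserves $3$-edge-connectivity. For the reverse inequality I would lift $\mathcal{C}$ to $G_U$: at each expanded vertex $v\in U$ all three incident edges have depth $2$, so exactly three cycles of $\mathcal{C}$ pass through $v$, and the pairs of edges that they use form a $2$-regular configuration on the three edges at $v$ — which is forced to be a single triangle. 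Rerouting these three passes through the expanded triangle therefore uses each of the three new edges exactly once and keeps every cycle an even subgraph. The resulting cover of $G_U$ has length $scc(G)+3|U|$, and substituting $|U|=scc(G)-\frac{4}{3}|E(G)|$ shows that this equals $\frac{4}{3}|E(G_U)|$.

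Finally I would conclude $T(G)\le |U|=scc(G)-\frac{4}{3}|E(G)|$ and combine this with Proposition \ref{prop:sccT(G)} to obtain equality. The main obstacle I anticipate is the lifting step: one must check carefully that the three cycles through each expanded vertex really pair its incident edges into a triangle, so that each of the three triangle edges is covered exactly once and no extra length is incurred, and that the rerouted families are still legitimate cycles. Once this local rerouting is justified, the remainder is the parity observation together with routine bookkeeping of lengths and degrees.
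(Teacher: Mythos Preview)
Your proposal is correct and follows essentially the same route as the paper's proof: both take a depth-$2$ shortest cycle cover, identify $U$ as the set of vertices where all three incident edges have depth $2$, expand these to triangles, and reroute the cycles so that the resulting cover of $G_U$ has length exactly $\tfrac{4}{3}|E(G_U)|$, whence Theorem~\ref{thm:Steffen4Coverable} gives $T(G)\le |U|=scc(G)-\tfrac{4}{3}|E(G)|$. Your explicit framing of $E_2$ as a parity subgraph (via Remark~\ref{prop:ParitySubgraphCubic}) and your verification that the three cycle-passes at each $v\in U$ are forced to pair the incident edges as $\{a,b\},\{b,c\},\{c,a\}$ are more detailed than the paper's treatment, but the underlying argument is the same; the ``obstacle'' you flag is not a real gap, and your $2$-regular configuration observation resolves it cleanly.
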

\begin{proof}
By Proposition \ref{prop:sccT(G)}, it suffices to prove that for any $3$-edge-connected cubic graph $G$, we have 
\[scc(G)\geq \frac{4}{3}|E(G)|+T(G),\]
or equivalently,
\[T(G)\leq scc(G)-\frac{4}{3}|E(G)|.\]
Consider a shortest cycle cover $\cal C$ of depth $2$ in $G$, meaning that each edge of $G$ belongs to either one or two cycles in $\cal C$. The vertex-set of $G$ is partitioned in two subsets: the set $U$ of vertices incident to three edges of depth $2$ and the one of vertices incident to exactly one edge of depth $2$. Clearly, 
\[ scc(G)=\frac{4}{3}|E(G)|+|U|.\]
Next, we expand every vertex $u$ in $U$ into a triangle, thus obtaining a bridgeless cubic graph $G_U$. We then extend every cycle of $\cal C$ passing through $u$ to a cycle in $G_U$ by adding exactly one edge of the corresponding triangle. The resulting set of cycles forms a cycle cover of $G_U$ with length $\frac{4}{3}|E(G_U)|$, since the set of edges covered twice forms a perfect matching of $G_U$, while every other edge is covered once.
Since $G_U$ admits a cycle cover of length $\frac{4}{3}|E(G_U)|$, it follows from Theorem \ref{thm:Steffen4Coverable} that its edge-set can be covered with four perfect matchings. Hence, 
\[T(G)\leq |U|=scc(G)-\frac{4}{3}|E(G)|.\]
   The proof is complete.
\end{proof}

We leave as an open problem determining whether there exists a bridgeless cubic graph such that $scc(G)<\frac{4}{3}|E(G)|+T(G)$.

\subsection{Some Upper Bounds for $T(G)$}

We have already observed that $5$-Cycle Double Cover Conjecture implies $T(G) \leq |V(G)|.$ In this section, we propose some stronger upper bounds for the parameter $T(G)$ in terms of the order of the graph.

\begin{theorem}\label{thm:25|V|UpperBound}
    If Conjecture \ref{conj:52CDC} holds true, then $T(G)\leq \frac{2}{5}|V(G)|$.
\end{theorem}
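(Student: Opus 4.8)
The plan is to deduce the bound directly from the $5$-Cycle Double Cover Conjecture, reusing the vertex-expansion construction that already appears in the proof of Proposition~\ref{thm:12coverimpliesequality}. Assume Conjecture~\ref{conj:52CDC} holds, so $G$ carries five cycles $C_1,\dots,C_5$ with every edge lying in exactly two of them; their total length is $\sum_{j=1}^{5}|E(C_j)|=2|E(G)|=3|V(G)|$. The first step is to discard a \emph{longest} cycle: say $|E(C_5)|=\max_j|E(C_j)|$, so that $|E(C_5)|\ge\tfrac15\cdot 3|V(G)|=\tfrac35|V(G)|$ by averaging. I claim $\{C_1,\dots,C_4\}$ is still a cover of $G$ of depth at most $2$: an edge outside $C_5$ was covered twice among all five cycles and hence twice here, while an edge of $C_5$ is now covered exactly once. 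Its length is therefore $L=3|V(G)|-|E(C_5)|\le\tfrac{12}{5}|V(G)|$.

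The second step is to isolate, from the proof of Proposition~\ref{thm:12coverimpliesequality}, the general principle that \emph{any} depth-$2$ cycle cover of length $L$ forces $T(G)\le L-\tfrac43|E(G)|$. Given such a cover, at every vertex the three incident depths sum to an even number, since each cycle (an even subgraph) meets a vertex of the cubic graph in $0$ or $2$ edges; hence each vertex is incident with either exactly one or exactly three edges of depth $2$. Let $U$ be the set of vertices of the latter type and expand them into triangles. Exactly as in Proposition~\ref{thm:12coverimpliesequality}, rerouting each cycle through one edge of every triangle it passes through yields a cover of $G_U$ in which the depth-$2$ edges form a perfect matching, so the cover has length $\tfrac43|E(G_U)|$; since expanding vertices of a bridgeless cubic graph keeps it bridgeless, Theorem~\ref{thm:Steffen4Coverable} shows $G_U$ is coverable by four perfect matchings, whence $T(G)\le|U|$. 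Writing $D$ for the set of depth-$2$ edges, $D$ is a parity subgraph whose degree-$3$ vertices are precisely $U$, so $L=|D|+|E(G)|$ and $|U|=|D|-\tfrac12|V(G)|=L-2|V(G)|=L-\tfrac43|E(G)|$.

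Combining the two steps gives the desired estimate: $T(G)\le L-\tfrac43|E(G)|=\bigl(3|V(G)|-|E(C_5)|\bigr)-2|V(G)|=|V(G)|-|E(C_5)|\le|V(G)|-\tfrac35|V(G)|=\tfrac25|V(G)|$.

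The step I expect to require the most care is not the arithmetic but the verification that the construction of Proposition~\ref{thm:12coverimpliesequality} genuinely applies to the (non-shortest) depth-$2$ cover $\{C_1,\dots,C_4\}$. Concretely, at a vertex $u\in U$ all three incident edges have depth $2$, so exactly three of the four cycles pass through $u$, each using a pair of its incident edges; because every incident edge must be used twice, these three pairs are forced to be the three distinct pairs, so each added triangle edge receives depth $1$ and the three spokes retain depth $2$. This is exactly what makes $D$ a perfect matching of $G_U$ and pins the cover length at $\tfrac43|E(G_U)|$. Once this local check and the fact that vertex expansion preserves bridgelessness are in hand, the pigeonhole choice of $C_5$ and the final computation are routine.
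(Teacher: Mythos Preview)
Your proof is correct. Both your argument and the paper's arrive at the same intermediate inequality $T(G)\le |V(G)|-|E(C_{\max})|$, where $C_{\max}$ is a longest cycle in a $5$-CDC, but they certify it differently. The paper keeps the longest cycle $C_0$, forms the four parity subgraphs $\overline{C_0\triangle C_j}$, observes that their degree-$3$ vertices all lie outside $V(C_0)$, and invokes Remark~\ref{prop:ParitySubgraphCubic} to turn these into four perfect matchings of $G_U$. You instead \emph{discard} the longest cycle to obtain a depth-$2$ cover by four cycles, then reuse the expansion-and-rerouting construction from Proposition~\ref{thm:12coverimpliesequality} to produce a cycle cover of $G_U$ of length exactly $\tfrac{4}{3}|E(G_U)|$, and finish with Theorem~\ref{thm:Steffen4Coverable}. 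Your route has the advantage of being more self-contained: the local check that the three cycles through a vertex of $U$ use the three distinct edge-pairs (so each new triangle edge gets depth~$1$) makes it transparent that all edges of $G_U$ are covered, whereas the paper's appeal to Remark~\ref{prop:ParitySubgraphCubic} leaves the coverage of the new triangle edges implicit. The paper's route, on the other hand, ties the argument directly to the parity-subgraph framework of \cite{Hou2016,Steffen2015} and avoids re-deriving the expansion construction.
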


\begin{proof} We follow the approach of \cite{Hou2016} and \cite{Steffen2015}. Let $G$ be a bridgeless cubic graph. Let $\{C_0,...,C_4\}$ be a 5-CDC of $G$. Observe that \[|C_0|+...+|C_4|=2|E|,\] where $|C_j|$ is the length of $C_j$. We can assume that \[|C_0|\geq \frac{2}{5}|E|.\] Consider the even cover \[\mathcal{C}_0=\{C_0\bigtriangleup C_1,...,C_0\bigtriangleup C_4\}.\] As in \cite{Hou2016}, we have if $e\in C_0$, then $\mathcal{C}_0$ covers $e$ three times, and if $e\notin C_0$, then $\mathcal{C}_0$ covers $e$ two times. Let \[\mathcal{J}_0=\{\overline{C_0\bigtriangleup C_1},...,\overline{C_0\bigtriangleup C_4}\}\] be the set of complements of the subgraphs $C_0\bigtriangleup C_1,...,C_0\bigtriangleup C_4$. Observe that all of them are parity subgraphs. Moreover,  if $e\in C_0$, then $\mathcal{J}_0$ covers $e$ once, and if $e\notin C_0$, then $\mathcal{J}_0$ covers $e$ twice. Observe that if an edge $e$ is covered once in $\mathcal{J}_0$, then no parity subgraph of $\mathcal{J}_0$ has degree three on endpoints of $e$. This means that the number of vertices of $G$ which have degree three in one of parity subgraphs of $\mathcal{J}_0$ is at most \[T(G)\leq |V|-|C_0|\leq |V|-\frac{2}{5} |E|=|V|-\frac{3}{5}|V|=\frac{2}{5}|V|.\]
Here, we used Remark \ref{prop:ParitySubgraphCubic}. Thus, $T(G)\leq \frac{2}{5}|V(G)|$. The proof is complete.
\end{proof}

One may wonder what could be the best upper bound for $T(G)$. The following conjecture  tries to answer this question.

\begin{conjecture}
\label{conj:T(G)upperbound} Let $G$ be a bridgeless cubic graph. Then \[T(G)\leq \frac{|V(G)|}{10}.\]
\end{conjecture}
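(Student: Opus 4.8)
The final statement is Conjecture \ref{conj:T(G)upperbound}, which asserts $T(G) \leq \frac{1}{10}|V(G)|$ for every bridgeless cubic graph $G$. Since this is stated as a \emph{conjecture} rather than a theorem, it is not meant to be proven outright in the paper. However, a plan for attacking it can be outlined.

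\textbf{My proof strategy.}

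The plan is to derive the bound as a consequence of the Petersen Coloring Conjecture (Conjecture \ref{conj:P10conj}), exactly as the abstract promises. The key observation is that the bound $\frac{1}{10}|V(G)|$ is tight for the Petersen graph $P_{10}$ itself: $P_{10}$ has $10$ vertices and is not $3$-edge-colorable, so $T(P_{10}) \geq 1$, and expanding a single vertex to a triangle should make the resulting $12$-vertex graph $4$-coverable, giving $T(P_{10}) = 1 = \frac{10}{10}$. This suggests that a $P_{10}$-coloring is the right tool, since it would let us transfer covering structure from $P_{10}$ to $G$ at a ``cost'' proportional to the number of vertices.

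First I would recall that a $P_{10}$-coloring $\phi: E(G) \to E(P_{10})$ partitions $V(G)$ according to which vertex of $P_{10}$ realizes the color-class of each vertex's incident edges. The Petersen graph has a natural covering by four parity subgraphs (equivalently, four cycles covering each edge at most twice, coming from its structure relative to Conjecture \ref{conj:52CDC} and Theorem \ref{thm:ParityGraphs}). I would pull back such a cover of $P_{10}$ along $\phi$ to obtain parity subgraphs of $G$. By Remark \ref{prop:ParitySubgraphCubic}, the vertices that need expansion are precisely those of degree three in one of the pulled-back parity subgraphs. The main work is a careful accounting: I want to show that for the optimal choice of cover of $P_{10}$, the fraction of ``bad'' vertices (those forcing an expansion) is at most $\frac{1}{10}$.

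The hard part will be the counting step. Pulling back a parity subgraph cover of $P_{10}$ does \emph{not} automatically control how many vertices of $G$ become degree-three; the number of such vertices depends on the distribution of $\phi$ over the ten vertices of $P_{10}$, which is not uniform and cannot be controlled a priori. In particular, one must either find a single cover of $P_{10}$ in which the ``bad'' vertex-type occurs for at most one vertex of $P_{10}$, or average over a family of covers (using the symmetry group of $P_{10}$, which acts transitively on vertices) so that each vertex of $G$ is ``bad'' for at most a $\frac{1}{10}$ fraction of the covers, then pick the best one. I would pursue the averaging argument: if there are $k$ covers such that each vertex of $G$ is forced to expand in at most $\frac{k}{10}$ of them, then some cover forces at most $\frac{1}{10}|V(G)|$ expansions. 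Establishing that such a balanced family exists---essentially showing the ``defect'' is spread evenly across the vertex-orbit of $P_{10}$---is the crux, and it is precisely why the statement remains a conjecture rather than a theorem: the pullback of a fixed cover need not distribute evenly, and reconciling the combinatorics of $\phi$ with the symmetry of $P_{10}$ is the genuine obstacle.
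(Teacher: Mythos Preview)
Your strategy matches the paper's: the implication from the Petersen Coloring Conjecture is exactly what is established there (Corollary \ref{cor:P10T(G)110Bound}, via Theorems \ref{thm:35EvenSubgraphImpliesT(G)10} and \ref{thm:P10implies35EvenSubgraph}). However, you overstate the difficulty of the counting step. No averaging over a family of covers is needed; a single well-chosen cover already works. By pigeonhole, some vertex $z$ of $P_{10}$ is the image (under the vertex-map induced by $\phi$) of at most $\frac{|V(G)|}{10}$ vertices of $G$. The Petersen graph has a $9$-cycle $C$ avoiding $z$, and this $C$ extends to a $5$-CDC of $P_{10}$. Pulling the $5$-CDC back along $\phi$ yields a $5$-CDC $(C_0,\ldots,C_4)$ of $G$ with $C_0=\phi^{-1}(C)$ and $|C_0|\geq \frac{9}{10}|V(G)|$. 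The four parity subgraphs $\overline{C_0\triangle C_i}$ ($i=1,\ldots,4$) then cover $E(G)$, and a direct degree count shows that a vertex of $G$ has degree $3$ in one of them if and only if it lies off $C_0$, i.e., if and only if it maps to $z$. Hence $T(G)\leq |V(G)|-|C_0|\leq \frac{|V(G)|}{10}$.

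So the reduction from Petersen Coloring to Conjecture \ref{conj:T(G)upperbound} is a theorem in the paper, not the obstacle you describe; the conjecture remains open solely because the Petersen Coloring Conjecture itself is open.
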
 Note that the upper bound in Conjecture \ref{conj:T(G)upperbound} is going to be tight: indeed, there exist infinitely many cubic graphs reaching such a bound (see for instance \cite{MacSkovSIDMA21} for more details), the smallest of them is the Petersen graph (Figure \ref{fig:Petersen10}), then $|V|=10$ and $T(G)=1$ since $P_{12}$ can be covered with four perfect matchings (Figure \ref{fig:Petersen12}).

\begin{figure}[ht]
\centering

  	\begin{center}
	
	\tikzstyle{every node}=[circle, draw, fill=black!50,
                        inner sep=0pt, minimum width=4pt]
                        
		\begin{tikzpicture}[scale=0.75]
																								
			\node[circle,fill=black,draw] at (0,0) (n1) {};
																								
			\node[circle,fill=black,draw] at (-2.5, 0) (n2) {};
																								
			\node[circle,fill=black,draw] at (2.5,0) (n3) {};
																								
			\node[circle,fill=black,draw] at (-1.5,-0.75) (n4) {};
																								
			\node[circle,fill=black,draw] at (1.5, -0.75) (n5) {};
																								
			\node[circle,fill=black,draw] at (-1,-2) (n6) {};
																								
			\node[circle,fill=black,draw] at (1,-2) (n7) {};
																								
			\node[circle,fill=black,draw] at (-2, -3) (n8) {};
																								
			\node[circle,fill=black,draw] at (2,-3) (n9) {};
			
			
			\node[circle,fill=black,draw] at (-1,2) (n01) {};
			
			\node[circle,fill=black,draw] at (0,1) (n02) {};
			
			\node[circle,fill=black,draw] at (1,2) (n03) {};

			\path[every node]
			(n1) edge  (n6)
																								    
			edge  (n7)
																								   	
			(n2) edge (n4)
			edge (n8)
																								       
			(n3) edge (n5)
			edge (n9)
			(n4) edge (n5)
			edge (n7)

			(n5) edge (n6)
																								   
			(n6)edge (n8)
																								   
			(n7) edge (n9)

			(n8) edge (n9)
			
			(n01) edge (n02)
			      edge (n03)
			      edge (n2)
			 (n02) edge (n03)
			        edge (n1)
			 (n03) edge (n3)
			;

		\end{tikzpicture}
																
	\end{center}
	\caption{The graph $P_{12}$.}\label{fig:Petersen12}

\end{figure}
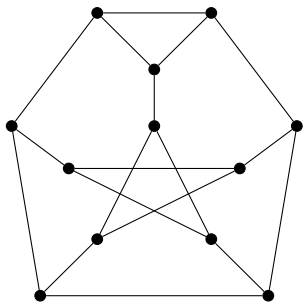

{Also, observe that if Conjecture \ref{conj:T(G)upperbound} is true, then by Proposition \ref{prop:sccT(G)} for any bridgeless cubic graph $G$ we will have

\[scc(G) \leq \frac{4}{3} |E(G)|+T(G)\leq \frac{4}{3} |E(G)|+\frac{|V(G)|}{10}=\frac{7}{5} |E(G)|,\]

which matches $\frac{7}{5}|E|$-bound from Conjecture \ref{conj:75conj} for cubic graphs. Thus, if we can show that Conjecture \ref{conj:T(G)upperbound} follows from Conjecture \ref{conj:52CDC}, then it would mean that Conjecture \ref{conj:52CDC} implies the restriction of Conjecture \ref{conj:75conj} to cubic graphs, something that was not previously known.

Unfortunately, we are not able to derive  Conjecture \ref{conj:T(G)upperbound} directly from 
Conjecture \ref{conj:52CDC}. However, this becomes possible if we start from a stronger version of the $5$-CDC Conjecture.
First of all, note that Conjecture \ref{conj:52CDC} can be reformulated in the following equivalent way.

\begin{conjecture}
\label{conj:5CDC25Bound} Let $G$ be any bridgeless graph. Then $G$ admits a $5$-CDC ${\cal C}=(E_0,...,E_4)$ such that $|E_0|\geq \frac{2}{5}|E(G)|$.
\end{conjecture}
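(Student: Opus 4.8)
The plan is to establish the conjecture as a direct refinement of the ordinary $5$-CDC Conjecture: starting from any $5$-CDC of $G$ I will extract one that can be listed so that its first member is long enough. Concretely, let $G$ be a bridgeless graph and invoke Conjecture \ref{conj:52CDC} to obtain a $5$-CDC $\{C_0,\dots,C_4\}$, that is, a family of five cycles in which every edge of $G$ lies in exactly two members. The crucial observation is a counting one: since each edge is counted exactly twice across the family,
\[
\sum_{i=0}^{4}|C_i| \;=\; 2|E(G)|.
\]

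First I would feed this identity into an averaging step. The five lengths $|C_0|,\dots,|C_4|$ sum to $2|E(G)|$, so their average is $\tfrac{2}{5}|E(G)|$, and hence at least one cycle, say $C_j$, satisfies $|C_j|\ge \tfrac{2}{5}|E(G)|$. Relabelling the family so that this longest cycle is listed first and the remaining four in any order yields an ordered $5$-CDC $\mathcal{C}=(E_0,\dots,E_4)$ of $G$ with $E_0=C_j$, and therefore with $|E_0|\ge \tfrac{2}{5}|E(G)|$. This is precisely the asserted existence statement, and note that no property of $G$ beyond possessing a $5$-CDC is used: the size bound on the largest cycle is obtained for free from the pigeonhole principle once the double cover is in hand.

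The main (and only genuine) obstacle is therefore the existence of the $5$-CDC itself, which is exactly the content of Conjecture \ref{conj:52CDC} and is what must be assumed; all the real difficulty lives in producing the cover, while the lower bound $\tfrac{2}{5}|E(G)|$ on the largest member is an immediate corollary of double counting. The one point requiring care is to verify that it is the word \emph{exactly} (each edge covered exactly twice) that licenses the equality $\sum_i|C_i|=2|E(G)|$; should one only demand coverage at least twice, the same averaging would still produce a cycle of length at least $\tfrac{2}{5}|E(G)|$, so the argument is robust to that distinction. Finally, since any ordered $5$-CDC as in the statement is in particular a $5$-CDC, the refinement is no stronger a hypothesis than Conjecture \ref{conj:52CDC}, which is why the statement is an equivalent reformulation of it; I would record this converse in a single sentence and not dwell on it.
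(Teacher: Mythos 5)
Your argument is correct and is essentially the paper's own reasoning: the paper treats this statement as an equivalent reformulation of Conjecture \ref{conj:52CDC} precisely because of the averaging step you give (the five cycle lengths sum to $2|E(G)|$ by the exactly-twice condition, so some cycle has length at least $\frac{2}{5}|E(G)|$ and can be listed first), which is the same step used without further comment in the proof of Theorem \ref{thm:25|V|UpperBound}. Nothing more is needed, and your closing remark that the converse direction is trivial matches why the paper calls the two conjectures equivalent.
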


We offer the following seemingly stronger version of the $5$-Cycle Double Cover Cover Conjecture.

\begin{conjecture}
\label{conj:5CDCstrenghtened} Let $G$ be any bridgeless graph. Then $G$ admits a $5$-CDC ${\cal C}=(E_0,...,E_4)$ such that $|E_0|\geq \frac{3}{5}|E(G)|$.
\end{conjecture}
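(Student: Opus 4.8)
The plan is to reduce the general bridgeless statement to cubic graphs, dispatch the $3$-edge-colorable case unconditionally, and then derive the essential (snark) case from the Petersen Coloring Conjecture by a pullback-and-averaging argument; the main obstacle will be removing this dependence. First I would apply the standard cycle-cover reductions to pass from an arbitrary bridgeless graph to a cubic one: suppress degree-two vertices and split each vertex of degree $\geq 4$ into a tree of cubic vertices. The only delicate point is that splitting inflates $|E|$, so one must verify that projecting a $5$-CDC of the cubic inflation back to the original graph (contracting the auxiliary edges) does not degrade the $\tfrac{3}{5}$-ratio of the longest member; this bookkeeping is routine but needs to be carried out carefully. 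For a cubic graph $G$ that is $3$-edge-colorable the claim is immediate: fixing a proper $3$-edge-coloring with matchings $M_1,M_2,M_3$, the three $2$-factors $E_0=M_1\cup M_2$, $E_1=M_1\cup M_3$, $E_2=M_2\cup M_3$ cover every edge exactly twice, so together with two edgeless even subgraphs $E_3,E_4$ they form a $5$-CDC, and $E_0$ is a $2$-factor with $|E_0|=|V(G)|=\tfrac{2}{3}|E(G)|\geq\tfrac{3}{5}|E(G)|$. Thus the bound holds with room to spare for Class~I graphs, and the value $\tfrac{3}{5}$ is forced only by the snarks.

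For the remaining (and crucial) case I would route the argument through the Petersen Coloring Conjecture (Conjecture \ref{conj:P10conj}). Assume $P_{10}\prec G$ via an $H$-coloring $\phi:E(G)\to E(P_{10})$. The key structural fact is that $\phi$ pulls back even subgraphs to even subgraphs: at a vertex $v$ of $G$ mapped to $w$ of $P_{10}$, the three edges of $\partial_G(v)$ are sent bijectively onto $\partial_{P_{10}}(w)$, so for any even subgraph $D$ of $P_{10}$ the degree of $v$ in $\phi^{-1}(D)$ equals the (even) degree of $w$ in $D$. Consequently, if $(D_0,\dots,D_4)$ is a $5$-CDC of $P_{10}$, then $(\phi^{-1}(D_0),\dots,\phi^{-1}(D_4))$ is a $5$-CDC of $G$, because each edge $e$ of $G$ has $\phi(e)$ covered exactly twice in $P_{10}$.

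To obtain a long member I would start from a concrete $5$-CDC of $P_{10}$ one of whose members $D_0$ is a circuit of length $9$; such a cover exists and can be exhibited directly (for instance, the $9$-circuit on all vertices but one, together with three $5$-circuits and one $6$-circuit through the remaining vertex). Note $\tfrac{3}{5}\,|E(P_{10})|=9$, which is exactly why the Petersen graph is the extremal example. Now $\phi$ partitions $E(G)$ into the $15$ color classes $\phi^{-1}(f)$, so $|\phi^{-1}(D_0)|=\sum_{f\in D_0}|\phi^{-1}(f)|$, and this need not reach $\tfrac{3}{5}|E(G)|$ for a fixed $\phi$. I would therefore average over $\mathrm{Aut}(P_{10})$: for $\sigma\in\mathrm{Aut}(P_{10})$ the map $\sigma\circ\phi$ is again a Petersen coloring, and $(\sigma\phi)^{-1}(D_0)=\phi^{-1}(\sigma^{-1}D_0)$. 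Since $\mathrm{Aut}(P_{10})$ acts transitively on $E(P_{10})$, the average of $|\phi^{-1}(\sigma^{-1}D_0)|$ over all $\sigma$ equals $|D_0|\cdot\tfrac{|E(G)|}{15}=\tfrac{3}{5}|E(G)|$. Hence some automorphism achieves at least the average, yielding a $5$-CDC of $G$ whose first member has length $\geq\tfrac{3}{5}|E(G)|$, as required.

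The main obstacle is that the argument above is only \emph{conditional}: it rests on Conjecture \ref{conj:P10conj}, which is strictly stronger than the $5$-CDC Conjecture (Conjecture \ref{conj:52CDC}) that the present statement already refines. An unconditional proof seems out of reach, since even producing a $5$-CDC for an arbitrary snark is open, and the additional demand of a member of relative length $\tfrac{3}{5}$ superimposes a layer of length control that no current unconditional method supplies. A natural intermediate target would be to weaken ``Petersen-colorable'' to colorability by some edge-transitive cubic graph carrying a $5$-CDC with a member of relative length $\tfrac{3}{5}$, or instead to begin from an arbitrary $5$-CDC and rebalance it through symmetric-difference exchanges so as to merge several short circuits into one long member without disturbing the double-cover property; deciding which circuits may be bundled into a single $E_0$ is precisely where the difficulty concentrates.
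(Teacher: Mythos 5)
First, a framing point: the statement you set out to prove is posed in the paper only as a conjecture (Conjecture \ref{conj:5CDCstrenghtened}); the paper offers no unconditional proof, and its only supporting result is Theorem \ref{thm:P10implies35EvenSubgraph}, which derives the \emph{restriction of the conjecture to cubic graphs} from the Petersen Coloring Conjecture (Conjecture \ref{conj:P10conj}). Your proposal lands in exactly the same place --- a conditional proof of the cubic case --- and your closing assessment that an unconditional proof is out of reach matches the paper's own framing. Where you differ is the counting device inside the conditional step. The paper uses the vertex map induced by a Petersen coloring $f$: by pigeonhole some vertex $z$ of $P_{10}$ is the image of at most $\frac{1}{10}|V(G)|$ vertices of $G$; taking a $9$-circuit $C$ of $P_{10}$ avoiding $z$ that lies in a $5$-CDC of $P_{10}$, every vertex of $G$ mapped into $V(C)$ has degree $2$ in $f^{-1}(C)$, whence $|f^{-1}(C)|\geq \frac{9}{10}|V(G)|=\frac{3}{5}|E(G)|$. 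You instead fix one $9$-circuit $D_0$ in a $5$-CDC of $P_{10}$ and average $|\phi^{-1}(\sigma^{-1}D_0)|$ over $\sigma\in\mathrm{Aut}(P_{10})$, using edge-transitivity to compute the average as $\frac{9}{15}|E(G)|=\frac{3}{5}|E(G)|$. Both arguments are correct and rest on the same two unproved-but-standard facts (preimages of a $5$-CDC under an $H$-coloring form a $5$-CDC, and $P_{10}$ has a $5$-CDC one of whose members is a $9$-circuit); the paper's pigeonhole is more elementary, while your averaging trades vertex bookkeeping for a group action and would generalize verbatim to colorings by any edge-transitive cubic graph admitting the analogous cover.

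There is, however, a genuine gap in your first step, and it is precisely the gap that explains why the paper claims only the cubic restriction. The reduction from arbitrary bridgeless graphs to cubic ones is \emph{not} routine bookkeeping for this strengthened statement. If splitting the vertices of degree at least four introduces $A$ auxiliary edges, then the cubic inflation $G'$ has $|E(G')|=|E(G)|+A$, and projecting a member $E_0'$ with $|E_0'|\geq\frac{3}{5}|E(G')|$ back to $G$ by contracting the auxiliary edges gives, in the worst case, only $|E_0'\cap E(G)|\geq \frac{3}{5}\bigl(|E(G)|+A\bigr)-A=\frac{3}{5}|E(G)|-\frac{2}{5}A$, strictly below the target whenever $A>0$. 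Nothing in your construction forces the long member to avoid the auxiliary edges (or compensates when it uses them), so the $\frac{3}{5}$ ratio can genuinely degrade under projection; repairing this requires controlling where the auxiliary edges fall in the cover, which is an open problem rather than bookkeeping. Two minor points: under the paper's definitions a cycle has positive even degree at every vertex, so your padding with ``two edgeless even subgraphs'' in the $3$-edge-colorable case needs the standard convention that members of a $5$-CDC may be empty (otherwise even $K_4$ admits no $5$-CDC); and that case is in any event subsumed by your main argument, since every $3$-edge-colorable cubic graph is Petersen-colorable by sending each color class to the edge set around a single vertex of $P_{10}$.
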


Now, we show that this strengthened version of Conjecture \ref{conj:52CDC}  is enough to derive Conjecture \ref{conj:T(G)upperbound}. }

\begin{theorem}
    \label{thm:35EvenSubgraphImpliesT(G)10} Conjecture \ref{conj:5CDCstrenghtened} implies Conjecture \ref{conj:T(G)upperbound}.
\end{theorem}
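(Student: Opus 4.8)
The plan is to mirror the structure of the proof of Theorem~\ref{thm:25|V|UpperBound}, but start from the stronger hypothesis that there is a $5$-CDC $\mathcal{C}=(C_0,\ldots,C_4)$ with a large distinguished cycle $|C_0|\geq \frac{3}{5}|E(G)|$, rather than merely $\frac{2}{5}|E(G)|$. First I would fix such a $5$-CDC and form the same family of parity subgraphs $\mathcal{J}_0=\{\overline{C_0\bigtriangleup C_1},\ldots,\overline{C_0\bigtriangleup C_4}\}$. As recalled in the earlier proof, each $C_0\bigtriangleup C_j$ covers every edge of $C_0$ three times and every edge outside $C_0$ twice, so each complement in $\mathcal{J}_0$ covers the edges of $C_0$ once and the edges outside $C_0$ twice. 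The key observation to reuse is that an edge covered exactly once by $\mathcal{J}_0$ forces neither endpoint to have degree three in any single parity subgraph of the family; hence only vertices incident to some edge \emph{outside} $C_0$ can become degree-three vertices, and by Remark~\ref{prop:ParitySubgraphCubic} these are precisely the vertices we must expand.

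The next step is the counting argument that converts the size of $C_0$ into a bound on the number of ``bad'' vertices. Since $G$ is cubic we have $3|V(G)|=2|E(G)|$, i.e. $|E(G)|=\frac{3}{2}|V(G)|$. A vertex fails to be safe only if it is missed by $C_0$, i.e. if not all three of its incident edges lie in $C_0$; equivalently it is incident to at least one of the $|E(G)|-|C_0|$ edges outside $C_0$. The cleanest way to bound the number of such vertices is to note that $\sum_{v}(\text{number of edges of }C_0\text{ at }v)=2|C_0|$, and a safe vertex contributes $3$ to this sum while every vertex contributes at most $3$; writing $s$ for the number of safe vertices gives $2|C_0|\leq 3s+ (\text{something})$, which I would turn into $s\geq 2|C_0|-2|E(G)|+|V(G)|$ after substituting $|E(G)|=\frac32|V(G)|$, so the number of unsafe vertices is at most $2|E(G)|-2|C_0|$. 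Plugging $|C_0|\geq\frac{3}{5}|E(G)|$ then yields $T(G)\leq 2|E(G)|-\frac{6}{5}|E(G)|=\frac{4}{5}|E(G)|=\frac{6}{5}|V(G)|$, which is far too weak, so the naive double-counting is not enough and the estimate must be sharpened.

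The real work, and the step I expect to be the main obstacle, is getting the constant down to $\frac{1}{10}$. The bound $T(G)\leq |V(G)|-|C_0|$ used in Theorem~\ref{thm:25|V|UpperBound} gives, with $|C_0|\geq\frac{3}{5}|E(G)|=\frac{9}{10}|V(G)|$, exactly $T(G)\leq |V(G)|-\frac{9}{10}|V(G)|=\frac{1}{10}|V(G)|$. So the crux is to re-derive the inequality $T(G)\leq |V(G)|-|C_0|$ in this setting: one must argue that the number of vertices \emph{not} spanned by the distinguished cycle $C_0$ is at most $|V(G)|-|C_0|$. This is where I would be careful, since $C_0$ is a cycle in the broad sense (a union of circuits with even-degree vertices), so $|C_0|$ counts edges, not vertices, and a circuit of length $k$ uses $k$ vertices. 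I would argue that the vertices incident to three edges of $C_0$ are the safe ones and that the remaining vertices number at most $|V(G)|-|C_0|$; concretely, if $C_0$ is a $2$-regular-on-its-support subgraph with $c$ edges spanning $c$ vertices of degree two plus the safe degree-three vertices, a local count of edge-endpoints shows the count of vertices missed by $C_0$ equals $|V(G)|-|C_0|$ when every vertex of $C_0$ has degree exactly two or three there. The technical care needed to justify this exact identity---rather than an inequality with a worse constant---is the heart of the argument, and I would verify it by the handshake relation $2|C_0|=\sum_{v\in V(G)}\deg_{C_0}(v)$ together with $\deg_{C_0}(v)\in\{0,2\}$ at every vertex (since $G$ is cubic and $C_0$ is a cycle, no vertex has $C_0$-degree $3$), giving $|C_0|$ equal to the number of vertices of positive $C_0$-degree; the safe vertices are exactly those whose three incident edges all lie in $C_0$, and I would reconcile these two descriptions to conclude $T(G)\leq |V(G)|-|C_0|\leq\frac{1}{10}|V(G)|$.
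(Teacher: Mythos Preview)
Your overall strategy and the final arithmetic are exactly what the paper does: take a $5$-CDC with $|C_0|\geq\frac{3}{5}|E(G)|$, form the four parity subgraphs $\overline{C_0\bigtriangleup C_j}$, and invoke the bound $T(G)\leq |V(G)|-|C_0|$ from Theorem~\ref{thm:25|V|UpperBound} to get $T(G)\leq |V(G)|-\frac{9}{10}|V(G)|=\frac{1}{10}|V(G)|$.

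However, your justification of the key inequality $T(G)\leq |V(G)|-|C_0|$ is confused, and the detour in your second paragraph stems from that confusion. You repeatedly state that a vertex is ``safe'' when \emph{all three} of its incident edges lie in $C_0$. This is backwards: a vertex is safe as soon as it is incident to \emph{at least one} edge of $C_0$, since such an edge is covered exactly once by $\mathcal{J}_0$, and hence that vertex cannot have degree three in any member of $\mathcal{J}_0$. You yourself note that $\deg_{C_0}(v)\in\{0,2\}$ for every $v$ (because $G$ is cubic and $C_0$ is a cycle), so \emph{no} vertex has all three incident edges in $C_0$; under your stated criterion there would be no safe vertices at all, and the ``two descriptions'' you propose to reconcile are flatly contradictory.

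With the correct criterion the step is immediate and needs no reconciliation: the safe vertices are precisely those with $\deg_{C_0}(v)=2$, i.e.\ the vertices of $C_0$; since $C_0$ is $2$-regular on its support, $|V(C_0)|=|C_0|$, so the number of bad vertices is exactly $|V(G)|-|C_0|$. That is the one-line argument the paper uses, verbatim from the proof of Theorem~\ref{thm:25|V|UpperBound}; your second paragraph's double-counting is unnecessary.
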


\begin{proof}  Let $G$ be a bridgeless cubic graph. Let $(C_0,...,C_4)$ be a $5$-CDC of $G$ with $|C_0|\geq \frac{3|E|}{5}$. Consider the even cover \[\mathcal{C}_0=\{C_0\bigtriangleup C_1,...,C_0\bigtriangleup C_4\}.\] As in \cite{Hou2016}, we have if $e\in C_0$, then $\mathcal{C}_0$ covers $e$ three times, and if $e\notin C_0$, then $\mathcal{C}_0$ covers $e$ two times. Let \[\mathcal{J}_0=\{\overline{C_0\bigtriangleup C_1},...,\overline{C_0\bigtriangleup C_4}\}\] be the set of complements of the subgraphs $C_0\bigtriangleup C_1,...,C_0\bigtriangleup C_4$. Observe that all of them are parity subgraphs. Moreover,  if $e\in C_0$, then $\mathcal{J}_0$ covers $e$ once, and if $e\notin C_0$, then $\mathcal{J}_0$ covers $e$ twice. If an edge $e$ is covered once in $\mathcal{J}_0$, then no parity subgraph of $\mathcal{J}_0$ has degree three on endpoints of $e$. This means that the number of vertices of $G$ which have degree three in one of parity subgraphs of $\mathcal{J}_0$ is at most \[T(G)\leq |V|-|C_0|\leq |V|-\frac{3}{5}|E|=|V|-\frac{9}{10}|V|=\frac{1}{10}|V|.\]
Here we used Remark \ref{prop:ParitySubgraphCubic}. Thus, $T(G)\leq \frac{1}{10}|V|$. The proof is complete.
\end{proof}

One may wonder how realistic is Conjecture \ref{conj:5CDCstrenghtened}: it is not difficult to show that it is a consequence of Petersen Coloring Conjecture.
\begin{theorem}
    \label{thm:P10implies35EvenSubgraph} Petersen Coloring Conjecture implies the restriction of Conjecture \ref{conj:5CDCstrenghtened} to cubic graphs.
\end{theorem}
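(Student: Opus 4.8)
The plan is to combine the pull-back of cycle covers along a $P_{10}$-coloring with the edge-transitivity of $P_{10}$. Since $G$ is bridgeless and cubic, the Petersen Coloring Conjecture supplies a $P_{10}$-coloring $\phi\colon E(G)\to E(P_{10})$. For each $f\in E(P_{10})$ set $c_f=|\phi^{-1}(f)|$, so $\sum_{f\in E(P_{10})}c_f=|E(G)|$. First I would prove the pull-back lemma: if $D\subseteq E(P_{10})$ is a cycle (even subgraph), then $\phi^{-1}(D)$ is a cycle of $G$. Indeed, for $v\in V(G)$ the three edges at $v$ map onto the three edges at some $w\in V(P_{10})$, so the degree of $v$ in $\phi^{-1}(D)$ equals the degree of $w$ in $D$, which is even. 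Consequently, if $(D_0,\dots,D_4)$ is a $5$-CDC of $P_{10}$ then $(\phi^{-1}(D_0),\dots,\phi^{-1}(D_4))$ is a $5$-CDC of $G$: each colour $f$ lies in exactly two of the $D_i$, hence each edge of $G$ is covered exactly twice. Moreover $|\phi^{-1}(D)|=\sum_{f\in D}c_f$.

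This reduces the theorem to a finite problem on $P_{10}$: it suffices to exhibit a $5$-CDC $(D_0,\dots,D_4)$ of $P_{10}$ with $\sum_{f\in D_0}c_f\ge\frac35|E(G)|$. The key structural ingredient I would establish is that $P_{10}$ admits a $5$-CDC one of whose cycles is a circuit of length $9$. Because $P_{10}$ is hypohamiltonian, for any vertex $v^{*}$ the graph $P_{10}-v^{*}$ has a Hamiltonian cycle $D_0$, which is a $9$-circuit of $P_{10}$; a short explicit computation then completes $D_0$ to a full $5$-CDC $\mathcal{D}^{*}=(D_0,\dots,D_4)$ with length profile $(9,5,5,5,6)$. (Concretely, the complement of $D_0$ is a parity subgraph with a single degree-three vertex, namely $v^{*}$, and the multigraph of still-uncovered incidences decomposes into four even subgraphs.)

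The decisive idea is then to move this single cover around by $\mathrm{Aut}(P_{10})$. For every $\alpha\in\mathrm{Aut}(P_{10})$ the tuple $\alpha(\mathcal{D}^{*})=(\alpha(D_0),\dots,\alpha(D_4))$ is again a $5$-CDC of $P_{10}$ whose first cycle $\alpha(D_0)$ is a $9$-circuit, and pulling it back yields a $5$-CDC of $G$ with distinguished cycle $\phi^{-1}(\alpha(D_0))$ of size $\sum_{f\in\alpha(D_0)}c_f$. I would then average this size over the group. Since $P_{10}$ is edge-transitive, the number of $\alpha$ with $f\in\alpha(D_0)$ is independent of $f$, so each edge lies in $\alpha(D_0)$ for the proportion $|D_0|/|E(P_{10})|=9/15=\frac35$ of automorphisms. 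Hence
\[
\frac{1}{|\mathrm{Aut}(P_{10})|}\sum_{\alpha\in\mathrm{Aut}(P_{10})}\ \sum_{f\in\alpha(D_0)}c_f=\frac35\sum_{f\in E(P_{10})}c_f=\frac35|E(G)|.
\]
As the average equals $\frac35|E(G)|$, some $\alpha$ satisfies $\sum_{f\in\alpha(D_0)}c_f\ge\frac35|E(G)|$; for that $\alpha$ the pulled-back $5$-CDC of $G$ has its distinguished cycle $E_0=\phi^{-1}(\alpha(D_0))$ with $|E_0|\ge\frac35|E(G)|$, which is the conclusion of Conjecture~\ref{conj:5CDCstrenghtened} for cubic $G$. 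Note this argument uses only edge-transitivity and $\sum_f c_f=|E(G)|$, not the perfect-matching constraints on $\phi$.

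The main obstacle is the structural ingredient of the second paragraph: showing $P_{10}$ has a $5$-CDC containing a length-$9$ circuit, and recognizing that $9$ is exactly the right length. The superficially attractive alternative of taking the large cycle to be a $2$-factor (the complement of a perfect matching, of relative size $\frac23$) fails, and ruling it out is the crux. If a $2$-factor $\bar M$ were one cycle of a $5$-CDC, the remaining four cycles would total $30-10=20$ edges; since $P_{10}$ has girth $5$, each would have to be a single $5$-circuit. But the five edges of $M$ appear in none of $\bar M$, so they must be covered twice ($10$ incidences) by these four $5$-circuits, and a matching meets a circuit of length $5$ in at most $2$ edges, giving at most $4\times 2=8<10$ incidences — a contradiction. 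Thus $9$ is simultaneously the largest circuit length achievable in any $5$-CDC of $P_{10}$ and precisely the value making the averaging reach the threshold $\frac35$, a tightness consistent with $P_{10}$ being extremal for Conjecture~\ref{conj:T(G)upperbound}. Once the length-$9$ cover is secured, the averaging step is routine.
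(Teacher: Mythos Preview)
Your proof is correct and shares the paper's core strategy: pull back to $G$ a $5$-CDC of $P_{10}$ whose distinguished cycle is a $9$-circuit. The difference is only in how the right $9$-circuit is chosen. The paper exploits the induced \emph{vertex} map: for each $v\in V(G)$ there is a unique $w\in V(P_{10})$ with $\phi(\partial_G(v))=\partial_{P_{10}}(w)$, and by pigeonhole some vertex $z\in V(P_{10})$ has at most $|V(G)|/10$ preimages; taking $C$ to be the $9$-circuit of $P_{10}-z$, a vertex of $G$ has degree $2$ in $\phi^{-1}(C)$ exactly when it does not map to $z$, so $|\phi^{-1}(C)|\ge\tfrac{9}{10}|V(G)|=\tfrac35|E(G)|$ directly. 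Your averaging over $\mathrm{Aut}(P_{10})$ via edge-transitivity reaches the identical bound but invokes more structure than necessary; the paper's vertex pigeonhole is the more economical route to the same $9$-circuit. Your digression ruling out a $2$-factor as the distinguished cycle is a nice sharpness observation not present in the paper.
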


\begin{proof} Assume that $G$ is a bridgeless cubic graph. Then by Petersen Coloring Conjecture, $G$ admits a Petersen coloring $f$. Since $P_{10}$ has ten vertices, one of its vertices, say $z$, is an image of at most $\frac{|V|}{10}$ vertices under $f$. Let $C$ be a 9-cycle of the Petersen graph that does not pass through $z$. $P_{10}$ admits a 5-CDC such that one of the even subgraphs in it is $C$. Observe that by definition of $z$, at least $\frac{9|V|}{10}$ vertices of $G$ map to vertices of $C$ under $f$. Thus, $f^{-1}(C)$ is an even subgraph of $G$ such that it is part of a 5-CDC of $G$ and its size is at least $\frac{9|V|}{10}=\frac{3}{5}|E|$. The proof is complete.
\end{proof}

\begin{corollary}
    \label{cor:P10T(G)110Bound} Petersen Coloring Conjecture implies Conjecture \ref{conj:T(G)upperbound}.
\end{corollary}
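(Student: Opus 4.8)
The plan is to chain together the two implications that are already in place. Corollary \ref{cor:P10T(G)110Bound} asserts that the Petersen Coloring Conjecture implies Conjecture \ref{conj:T(G)upperbound}, and at this point in the paper both of the intermediate steps have been established: Theorem \ref{thm:P10implies35EvenSubgraph} shows that the Petersen Coloring Conjecture implies the restriction of Conjecture \ref{conj:5CDCstrenghtened} to cubic graphs, and Theorem \ref{thm:35EvenSubgraphImpliesT(G)10} shows that Conjecture \ref{conj:5CDCstrenghtened} implies Conjecture \ref{conj:T(G)upperbound}. So the proof is essentially a transitivity argument, and the only subtlety I would watch for is that Conjecture \ref{conj:T(G)upperbound} is stated only for bridgeless cubic graphs, so the cubic restriction of the intermediate conjecture is exactly what is needed.

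Concretely, I would proceed as follows. First, assume the Petersen Coloring Conjecture holds, and let $G$ be an arbitrary bridgeless cubic graph; the goal is to verify $T(G)\leq \frac{|V(G)|}{10}$. By Theorem \ref{thm:P10implies35EvenSubgraph}, the Petersen Coloring Conjecture implies the restriction of Conjecture \ref{conj:5CDCstrenghtened} to cubic graphs, so $G$ admits a $5$-CDC $\mathcal{C}=(C_0,\ldots,C_4)$ with $|C_0|\geq \frac{3}{5}|E(G)|$. Since $G$ is cubic, this is precisely the hypothesis consumed in the proof of Theorem \ref{thm:35EvenSubgraphImpliesT(G)10}, and applying that theorem's argument to $G$ yields $T(G)\leq \frac{1}{10}|V(G)|$. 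As this holds for every bridgeless cubic graph $G$, Conjecture \ref{conj:T(G)upperbound} follows.

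I would expect the writeup to be very short, since no new construction is required; the substantive work was already done in the two preceding theorems. The one step worth stating explicitly is the matching of quantifiers between the conjectures: Conjecture \ref{conj:5CDCstrenghtened} is phrased for all bridgeless graphs, whereas Theorem \ref{thm:P10implies35EvenSubgraph} only delivers its cubic restriction, and Conjecture \ref{conj:T(G)upperbound} is in turn a statement purely about cubic graphs. Because $T(G)$ is only defined for bridgeless cubic graphs, the cubic restriction is exactly the right amount of strength to feed into Theorem \ref{thm:35EvenSubgraphImpliesT(G)10}, so there is no gap. The main obstacle, if any, is purely presentational: making sure the reader sees that the two implications compose cleanly over the same class of graphs rather than over the larger class of all bridgeless graphs.

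\begin{proof}
This follows by combining Theorem \ref{thm:P10implies35EvenSubgraph} and Theorem \ref{thm:35EvenSubgraphImpliesT(G)10}. Let $G$ be a bridgeless cubic graph and assume that the Petersen Coloring Conjecture holds. By Theorem \ref{thm:P10implies35EvenSubgraph}, $G$ admits a $5$-CDC $\mathcal{C}=(C_0,\ldots,C_4)$ with $|C_0|\geq \frac{3}{5}|E(G)|$. Hence the hypothesis of Theorem \ref{thm:35EvenSubgraphImpliesT(G)10} is satisfied for $G$, and its proof yields $T(G)\leq \frac{1}{10}|V(G)|$. Since $G$ was an arbitrary bridgeless cubic graph, Conjecture \ref{conj:T(G)upperbound} holds. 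The proof is complete.
\end{proof}
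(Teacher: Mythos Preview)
Your proposal is correct and matches the paper's intent: the corollary is stated without an explicit proof precisely because it follows immediately by composing Theorem~\ref{thm:P10implies35EvenSubgraph} with Theorem~\ref{thm:35EvenSubgraphImpliesT(G)10}. Your observation that only the cubic restriction of Conjecture~\ref{conj:5CDCstrenghtened} is needed (since the proof of Theorem~\ref{thm:35EvenSubgraphImpliesT(G)10} is carried out for a bridgeless cubic graph) is exactly the right justification for why the two implications compose without a gap.
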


We have derived the restriction of Conjecture \ref{conj:5CDCstrenghtened} to cubic graphs as a consequence of Conjecture \ref{conj:P10conj}. The authors suspect that one should be able to derive Conjecture \ref{conj:5CDCstrenghtened} as a consequence of Conjecture \ref{conj:5CDC25Bound}. In other words, they would like to offer:

\begin{conjecture}
\label{thm:5CDCequivalence} Conjecture \ref{conj:5CDCstrenghtened} is equivalent to Conjecture \ref{conj:5CDC25Bound}.
\end{conjecture}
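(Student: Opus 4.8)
The plan is to prove the two implications separately. One direction is immediate: Conjecture~\ref{conj:5CDCstrenghtened} trivially implies Conjecture~\ref{conj:5CDC25Bound}, since a $5$-CDC with $|E_0|\ge \tfrac{3}{5}|E(G)|$ is in particular a $5$-CDC with $|E_0|\ge\tfrac{2}{5}|E(G)|$. Hence the entire content lies in the converse. Here it is worth recording first that Conjecture~\ref{conj:5CDC25Bound} is really nothing more than the assertion that every bridgeless graph admits a $5$-CDC: given any $5$-CDC $(E_0,\dots,E_4)$ one has $\sum_{i=0}^4|E_i|=2|E(G)|$, so the largest even subgraph already satisfies $|E_0|\ge\tfrac{2}{5}|E(G)|$ automatically. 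Thus the converse amounts to showing that the \emph{mere existence} of a $5$-CDC for every bridgeless graph forces the existence of a $5$-CDC in which one even subgraph reaches the higher threshold $\tfrac{3}{5}|E(G)|$.

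To attack the hard direction I would work, as in Theorems~\ref{thm:35EvenSubgraphImpliesT(G)10} and \ref{thm:P10implies35EvenSubgraph}, with cubic graphs, and reformulate a $5$-CDC combinatorially. Identifying the five cycles with the ground set $[5]=\{0,1,2,3,4\}$, a $5$-CDC corresponds to labelling each edge $e$ by the $2$-subset $\lambda(e)\subseteq[5]$ of cycles through $e$, subject to the local rule that the three labels meeting at a vertex are the three $2$-subsets of a common $3$-subset of $[5]$; this is exactly the Kneser description of $P_{10}$. Under this encoding $|C_i|=\#\{e:\ i\in\lambda(e)\}$, so that $\sum_i|C_i|=2|E(G)|$ and the goal becomes: choose the labelling so that some element $i\in[5]$ appears in the labels of at least $\tfrac{3}{5}|E(G)|$ edges. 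Averaging only furnishes a ``popular'' element occurring in $\tfrac{2}{5}|E(G)|$ labels, and the Petersen graph, whose $9$-cycle saturates the ratio $\tfrac{9}{15}=\tfrac{3}{5}$, shows that $\tfrac{3}{5}$ is precisely the correct target and that no larger constant can be expected.

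The mechanism I would then try is amplification. From $G$ one would construct an auxiliary bridgeless graph $\widehat G$ containing many correlated copies of $G$, glued so that any $5$-CDC of $\widehat G$ restricts to $5$-CDCs on the copies that share a common dominant element of $[5]$; feeding the assumed $5$-CDC of $\widehat G$ (guaranteed by Conjecture~\ref{conj:5CDC25Bound}) into this correlation would push the dominant element past the $\tfrac{3}{5}$ threshold on at least one copy, and the corresponding even subgraph, pulled back via Remark~\ref{prop:ParitySubgraphCubic} and Theorem~\ref{thm:ParityGraphs}, would give the required $5$-CDC of $G$. A cleaner alternative is the minimal counterexample method: assume $G$ is edge-minimal admitting no $5$-CDC with a $\tfrac{3}{5}$-cycle, normalise it through the standard reductions (bridgelessness, suppression of parallel edges, $3$-edge-connectivity and cyclic connectivity), and then locate a reducible configuration whose contraction yields either a smaller counterexample or a bridgeless graph with no $5$-CDC at all, contradicting Conjecture~\ref{conj:5CDC25Bound}.

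I expect the main obstacle to be exactly this concentration step. The identity $\sum_i|C_i|=2|E(G)|$ pins the average popularity of an element at $\tfrac{2}{5}$, while the automorphism group $S_5$ of $P_{10}$ merely permutes the labels and hence permutes the sizes $(|C_0|,\dots,|C_4|)$ without increasing their maximum; so no symmetry argument applied to a single $5$-CDC can lift the maximum from $\tfrac{2}{5}$ to $\tfrac{3}{5}$. A proof must genuinely exploit the freedom to pass between different $5$-CDCs, and in the amplification route the delicate issue is to design the gluing interface of $\widehat G$ so that it simultaneously enforces alignment of the dominant elements across copies and still admits a $5$-CDC, since otherwise the reduction is vacuous. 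Controlling the labels at that interface is, in my view, the crux on which the conjecture currently rests.
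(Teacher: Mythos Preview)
The statement you are addressing is labelled in the paper as a \emph{conjecture}, not a theorem: the authors merely offer it as something they ``suspect'' and ``would like to offer'', and they provide no proof whatsoever. So there is no proof in the paper to compare your proposal against.

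Your proposal is not a proof either, and you say as much yourself. You correctly observe that one implication is trivial and that Conjecture~\ref{conj:5CDC25Bound} is equivalent to the plain $5$-CDC Conjecture by an averaging argument. But for the substantive direction you only sketch two strategies (an amplification/gadget construction and a minimal-counterexample reduction) without carrying either through, and you explicitly identify the ``concentration step'' as an unresolved obstacle. In particular, your amplification idea requires designing a gluing of copies of $G$ so that \emph{every} $5$-CDC of the glued graph aligns the dominant colour across copies; you give no such gadget, and there is no reason to believe one exists. Likewise, the minimal-counterexample route needs a concrete reducible configuration, which you do not supply. These are not minor details to be filled in later; they are exactly the content of the conjecture. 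As it stands, your write-up is an honest discussion of why the problem is hard, not a proof, and that matches the status the paper assigns to it.
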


\section{Expanding Vertices to Obtain a Graph with a Perfect Matching}
\label{CubicResultsSection}

In this section, {we consider cubic graphs that may contain bridges}. For a cubic graph $G$ let $t(G)$ be the size of a smallest subset $U$ of $V(G)$, such that $G_U$ has a perfect matching. Note that $t(G)$ is well-defined and $t(G)\leq |V|$, since if we replace all vertices of $G$ with a triangle then {the set of edges not included in the introduced triangles forms} a perfect matching in $G_V$.

We start by obtaining a Gallai type equality for $t(G)$. If $G$ is an arbitrary cubic graph, then let $\ell(G)$ be the number of edges in a cycle of $G$ having maximum length.

\begin{theorem}\label{thm:GallaiEqualityt(G)}
    Let $G$ be a cubic graph. Then,
    \[|V(G)|=t(G)+\ell(G).\]
\end{theorem}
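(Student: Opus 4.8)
The plan is to prove the two inequalities $|V(G)|\le t(G)+\ell(G)$ and $|V(G)|\ge t(G)+\ell(G)$ separately, exploiting the correspondence between even subgraphs of $G$ and perfect matchings of expansions $G_U$ that is already implicit in Remark \ref{prop:ParitySubgraphCubic}. The structural fact I would lean on throughout is that in a cubic graph a cycle (in the broad sense) is a disjoint union of circuits, so any even subgraph $H$ has exactly $|E(H)|$ vertices of degree two and $|V(G)|-|E(H)|$ isolated vertices. Dually, the complement $E(G)\setminus E(H)$ is a parity subgraph in which every vertex has degree one or three, and its degree-three vertices are precisely the isolated vertices of $H$.

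For the inequality $t(G)\le |V(G)|-\ell(G)$, I would start from a maximum even subgraph $H$, so that $|E(H)|=\ell(G)$. Let $J=E(G)\setminus E(H)$ be the associated parity subgraph and let $U$ be the set of its degree-three vertices; as noted above these are exactly the isolated vertices of $H$, hence $|U|=|V(G)|-|E(H)|=|V(G)|-\ell(G)$. By Remark \ref{prop:ParitySubgraphCubic}, $J$ is a perfect matching of $G_U$, so $G_U$ admits a perfect matching and therefore $t(G)\le |U|=|V(G)|-\ell(G)$.

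The reverse inequality is the substantive direction and runs the correspondence backwards. Let $U$ be a minimum set with $|U|=t(G)$ such that $G_U$ has a perfect matching $M$. The key local observation is how $M$ meets each triangle $T_v$ introduced for $v\in U$: since $T_v$ has three (an odd number of) vertices, $M$ contains at most one edge of $T_v$, and accordingly either exactly one or all three of the original edges incident to $v$ lie in $M$. For a non-expanded vertex, exactly one incident edge lies in $M$. Hence, letting $M'$ denote the set of original edges of $G$ that lie in $M$, every vertex of $G$ has degree one or three in $M'$, so $M'$ is a parity subgraph of $G$ and $H:=E(G)\setminus M'$ is an even subgraph. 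A degree count, $2|M'|=\sum_v \deg_{M'}(v)=|V(G)|+2a$ where $a$ is the number of expanded vertices meeting $M$ in all three original edges, gives $|M'|=\tfrac{|V(G)|}{2}+a$ and hence $|E(H)|=\tfrac{3}{2}|V(G)|-|M'|=|V(G)|-a$. Since $a\le |U|=t(G)$, this produces an even subgraph with $|E(H)|\ge |V(G)|-t(G)$, whence $\ell(G)\ge |V(G)|-t(G)$. Combining the two inequalities yields the identity.

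The main care point lies entirely in the reverse direction: the triangle case analysis must be airtight (that a matching uses at most one triangle edge, and that the remaining triangle vertex or vertices force exactly the right original edges into $M$), after which the bookkeeping converting vertex degrees of $M'$ into the edge count $|E(H)|=|V(G)|-a$ is routine. Everything else is the essentially formal translation between even subgraphs, their complementary parity subgraphs, and perfect matchings of $G_U$ provided by Remark \ref{prop:ParitySubgraphCubic}.
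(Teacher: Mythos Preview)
Your proof is correct and follows the same even-subgraph/parity-subgraph duality as the paper. One small difference in the reverse inequality is worth noting: the paper works with a $2$-factor of $G_U$ and asserts (without justification) that minimality of $U$ forces every expansion triangle to lie entirely in it, so that deleting the triangles leaves a cycle on exactly $|V(G)|-t(G)$ vertices. You instead work with the perfect matching directly, carry out the triangle case analysis, and only use the trivial bound $a\le |U|$ to produce an even subgraph with at least $|V(G)|-t(G)$ edges. This sidesteps the paper's unproved minimality claim and is arguably cleaner, at the cost of a short explicit computation.
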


\begin{proof} We use an argument similar to the one used to prove standard Gallai equalities, see \cite{harary}. Let $C$ be a longest cycle of $G$.  Let us expand the vertices of $G$ in $V(G) \setminus V(C)$ to triangles. Then $C$ together with these new triangles will form a $2$-factor in $G$. The complement of this $2$-factor will be a perfect matching in the expanded graph. Thus:
\[t(G)\leq |V(G)|-|V(C)|=|V(G)|-\ell(G),\]
or
\[\ell(G)+t(G)\leq |V(G)|.\]
For the proof of the converse inequality, let $U\subseteq V(G)$ be a smallest subset of vertices whose expansion to triangles leads to a cubic graph $G_U$ with a perfect matching. Since $G_U$ is cubic, this is equivalent to $G_U$ having a $2$-factor. Since $U$ is minimum, hence $|U|=t(G)$ we have these new triangles will be a part of any $2$-factor of the cubic graph $G_U$. Let $C$ be a cycle of $G$ resulting from the $2$-factor by removing the new triangles. Then it contains
\[|V(G)|-t(G)\]
vertices and that many edges. Hence,
\[\ell(G)+t(G)\geq |V(G)|.\]
    The proof is complete.
\end{proof}

Next, we prove a lemma that will be used later in order to obtain the main result of this section.

\begin{lemma}
    \label{lem:BridgelessMatchingSubdivide} Let $G$ be a bridgeless cubic graph and let $E_0\subseteq E(G)$. Consider the cubic graph $H$ obtained from $G$ by subdividing every edge $e\in E_0$ and adding a copy of $W$ to it. Then,
    \[t(H) \leq \min_{M} |E_0 \cap M|\]
    where $M$ is a perfect matching of $G$.  
\end{lemma}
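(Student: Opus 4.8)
The plan is to exhibit, for a perfect matching $M$ of $G$ attaining $\min_M |E_0\cap M|$, an explicit set $U\subseteq V(H)$ of expansions with $|U|=|E_0\cap M|$ together with an explicit perfect matching of $H_U$; by the definition of $t$ this yields $t(H)\le |E_0\cap M|$. Since $G$ is bridgeless cubic, a perfect matching exists at all by Theorem \ref{thm:Petersen}, so such a minimizing $M$ makes sense.

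First I would record the forced behaviour of each gadget. Write, for $e=uv\in E_0$, the attached copy of $W$ as having apex $a_e$ (its unique degree-two vertex, joined to $w_e$) and two further vertices $b_e,c_e$ linked by the duplicated edge. In any perfect matching of $H$, or of any expansion $H_U$ that does not alter this block, the vertices $b_e$ and $c_e$ can only be matched to each other or to $a_e$; matching either of them to $a_e$ would leave the other unmatched, so the duplicated edge $b_ec_e$ must be used, and consequently $a_e$ must be saturated by its external neighbour, namely $w_e$ (or, if $w_e$ has been expanded, the triangle-vertex attached to $a_e$). Thus every gadget forces the edge leaving the block at $a_e$ into the matching.

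Next I would set $U=\{\,w_e : e\in M\cap E_0\,\}$, so that $|U|=|M\cap E_0|$, and expand each such $w_e$ into a triangle whose three vertices $w_e^u,w_e^v,w_e^a$ inherit the edges to $u$, to $v$, and to $a_e$, respectively. The perfect matching of $H_U$ is then built as follows. On the original vertices I keep the edges of $M$ avoiding $E_0$, which are genuine (unsubdivided) edges of $H$. For an edge $e=uv\in M\cap E_0$ I instead take $uw_e^u$, $vw_e^v$ and $w_e^a a_e$, which saturates all three triangle vertices externally and uses no triangle edge. For $e\in E_0\setminus M$ the vertex $w_e$ is not expanded and I match it to $a_e$. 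Finally, on every gadget I add $b_ec_e$. A direct check shows each vertex is covered exactly once: each original vertex by its unique $M$-edge (routed through the triangle precisely when that edge lies in $E_0\cap M$), each expanded triangle by its three external edges, each non-expanded $w_e$ by $w_ea_e$, each apex $a_e$ by its external edge, and each pair $b_e,c_e$ by the duplicated edge.

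Since $H_U$ therefore has a perfect matching with $|U|=|M\cap E_0|=\min_M|E_0\cap M|$, we conclude $t(H)\le \min_M|E_0\cap M|$. I expect the only delicate point to be the gadget analysis of the second paragraph: one must notice that expanding $w_e$ is exactly what unlocks the ``all three external'' matching of its triangle, letting both $u$ and $v$ be saturated toward the subdivided edge and thereby recovering the role of the lost matching edge $uv\in M$, whereas for $e\notin M$ the gadget harmlessly absorbs $w_e$ into $a_e$. No Tutte--Berge existence argument is needed, since the matching is written down explicitly.
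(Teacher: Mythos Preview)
Your proof is correct and follows essentially the same approach as the paper: you choose $U=\{w_e:e\in E_0\cap M\}$ and exhibit the same explicit perfect matching of $H_U$ (parallel edge $b_ec_e$ in each gadget, bridge to the apex, and for $e\in E_0\cap M$ the three external edges of the expanded triangle). The forced-behaviour analysis in your second paragraph is correct but unnecessary for the upper bound being proved, since you only need to exhibit one perfect matching of $H_U$, not argue anything about all of them.
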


\begin{proof} 
Let $M$ be a perfect matching of $G$ (see Theorem \ref{thm:Petersen}) such that $|E_0 \cap M|$ is minimum. 
Let $U$ be the subset of $V(H)$ consisting of all roots of a copy of $W$ which subdivide an edge of $G$ in $E_0 \cap M$. It suffices to prove the existence of a perfect matching in $H_U$. We construct a perfect matching $N$ of $H_U$ arising from the perfect matching $M$ of $G$ as follows:
if an edge $e\notin E_0$, then the corresponding edge in $H_U$ belongs to $N$ only if $e$ belongs to $M$.
If an edge $e=xy\in E_0$, then the perfect matching $N$ of $H_U$ contains one of the two parallel edges of the copy of $W$ corresponding to $e$, and the bridge joining such a copy of $W$ to the rest of the graph $H_U$: moreover, if $xy \in M$ then $N$ contains also the two edges incident with $x$ and $y$ and with the triangle obtained by the expansion of the root of $W$ in $H$. The assertion follows.
\end{proof}

Next lemma is a direct consequence of previous one and the observation that for every $h, g\in E_0$, $G-h-g$ has a perfect matching ((a) of Theorem \ref{thm:Petersen}).

\begin{lemma}\label{lem:E2}
    Let $G$ be a bridgeless cubic graph and let $E_0\subseteq E(G)$ consisting of at least two edges. Consider the cubic graph $H$ obtained from $G$ by subdividing every edge $e\in E_0$ and adding a copy of $W$ to it. Then, 
    \[ t(H) \leq |E_0|-2.\]
\end{lemma}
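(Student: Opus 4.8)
The plan is to feed a well-chosen perfect matching of $G$ into Lemma \ref{lem:BridgelessMatchingSubdivide}. That lemma gives
\[
t(H)\;\le\;\min_{M}|E_0\cap M|,
\]
the minimum being taken over all perfect matchings $M$ of $G$. Consequently, to prove $t(H)\le|E_0|-2$ it suffices to exhibit a single perfect matching $M$ of $G$ that misses at least two edges of $E_0$, since then $|E_0\cap M|\le|E_0|-2$, and a fortiori $\min_{M}|E_0\cap M|\le|E_0|-2$.

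To produce such a matching, I would invoke part (a) of Theorem \ref{thm:Petersen}. Because $|E_0|\ge 2$, I can fix two \emph{distinct} edges $h,g\in E_0$. Since $G$ is a bridgeless cubic graph, Petersen's theorem guarantees that $G-h-g$ has a perfect matching $M$. Any perfect matching of $G-h-g$ is also a perfect matching of $G$, and by construction it contains neither $h$ nor $g$. Hence $M$ is a perfect matching of $G$ with $h,g\notin M$, so $|E_0\cap M|\le|E_0|-2$.

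Combining the two observations yields
\[
t(H)\;\le\;\min_{M}|E_0\cap M|\;\le\;|E_0\cap M|\;\le\;|E_0|-2,
\]
which is the desired bound. There is no serious obstacle here: the only points requiring care are that the hypothesis $|E_0|\ge 2$ is exactly what permits selecting two distinct edges to delete, and that a perfect matching of $G-h-g$ really is a perfect matching of $G$ avoiding $h$ and $g$ (rather than some auxiliary object). Both are immediate, which is precisely why the statement is genuinely a corollary of Lemma \ref{lem:BridgelessMatchingSubdivide} together with Theorem \ref{thm:Petersen}(a).
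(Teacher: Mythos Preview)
Your proposal is correct and matches the paper's own argument exactly: the paper states this lemma as a direct consequence of Lemma~\ref{lem:BridgelessMatchingSubdivide} together with the observation from Theorem~\ref{thm:Petersen}(a) that for any two edges $h,g\in E_0$ the graph $G-h-g$ has a perfect matching.
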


In our next theorem, we obtain asymptotically tight upper bounds for $t(G)$ in the classes of cubic graphs and simple cubic graphs.

\begin{theorem}
    \label{thm:t(G)upperboundTight} Let $G$ be a cubic graph. Then, the following holds:
    \begin{enumerate}
        \item [(i)] $t(G)< \frac{|V|}{4}$;
        
        \item [(ii)] if $G$ is simple, then  $t(G)< \frac{|V|}{6}$;

    \end{enumerate}

Moreover, previous bounds are asymptotically tight.

\end{theorem}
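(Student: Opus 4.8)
The plan is to derive both bounds from the Gallai-type identity of Theorem~\ref{thm:GallaiEqualityt(G)}, which lets me replace $t(G)=|V|-\ell(G)$ by the dual task of exhibiting a large cycle, i.e.\ a vertex-disjoint union of circuits. So it suffices to cover more than $\frac34|V|$ of the vertices by disjoint circuits in general, and more than $\frac56|V|$ when $G$ is simple. The structural starting point is that no circuit uses a bridge, and that in a cubic graph every vertex is incident to $0$, $1$, or exactly $3$ bridges: if $v$ met two bridges, its third edge $f$ would lie on a circuit forced to enter and leave $v$ through two non-bridge edges, which is impossible, so $f$ is a bridge too. Hence the bridges form a forest; the vertices incident to three bridges (let $d_3$ be their number) are forced to stay uncovered; and deleting all bridges splits the rest of $G$ into bridgeless \emph{pieces} with all degrees in $\{2,3\}$, the degree-$2$ vertices being exactly the bridge endpoints. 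I record the global shape by the bridge tree $\mathcal T$ whose nodes are the pieces together with the $d_3$ three-bridge vertices and whose edges are the bridges.

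Next I would cover each piece as fully as possible. For a piece $P$ I suppress its degree-$2$ vertices to obtain a bridgeless cubic (multi)graph $P'$ and pick a $2$-factor of $P'$ missing as few degree-$2$ vertices of $P$ as possible. Using the classical fact (Edmonds) that the all-$\frac13$ vector lies in the perfect matching polytope of a bridgeless cubic graph---a quantitative sharpening of the estimate behind Lemma~\ref{lem:E2}---there is a perfect matching of $P'$ meeting the suppressed paths in total weight at most $\frac13\,p_2(P)$, where $p_2(P)$ is the number of bridges attached to $P$; its complementary $2$-factor leaves at most $\frac13\,p_2(P)$ vertices of $P$ uncovered. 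Being an integer, this deficiency $u(P)$ vanishes as soon as $p_2(P)\le 2$, so every leaf piece is covered in full (consistently with Corollary~\ref{cor:Endblock2factor} and Theorem~\ref{thm:Petersen}), and a piece that is a single circuit is covered trivially. Taking the disjoint union over all pieces, the number of uncovered vertices of $G$ is at most $d_3+\sum_P u(P)$.

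Writing $|V|=d_3+\sum_P|V(P)|$, the two target inequalities become
\[
\sum_P\bigl(|V(P)|-4\,u(P)\bigr)>3\,d_3 \qquad\text{and}\qquad \sum_P\bigl(|V(P)|-6\,u(P)\bigr)>5\,d_3
\]
in the general and simple cases respectively. The main obstacle is exactly this global count. A naive per-piece estimate is hopeless, since a \emph{fat theta} (two vertices joined by three long subdivided paths) can have $|V(P)|-4u(P)<0$, and yet it is always harmless because each of its bridges drags along a fully covered cap: the deficiencies must be paid for across $\mathcal T$, not locally. My plan is a discharging/flow argument on $\mathcal T$: because the $d_3$ three-bridge vertices are internal nodes of degree $3$, the tree has at least $d_3+2$ leaves, each a cap with at least $2$ interior vertices in general and at least $4$ when $G$ is simple (the minimal caps being $W$ and $W'$). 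I would route a demand of $3$ (resp.\ $5$) from every uncovered vertex toward the leaves and let the surplus interior vertices of the leaf-caps absorb it, feasibility following from the fact that every subtree detached from the rest still contains enough leaves. Making this amortization watertight is the real work.

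Finally, for asymptotic tightness I would use the tree-of-caps construction. Start from a tree with all internal vertices of degree $3$; by Proposition~\ref{prop:31degreeTrees} it has $I$ internal vertices and $L=I+2$ leaves. Graft a copy of $W$ (resp.\ $W'$) onto each leaf. The result is a cubic graph, and a simple cubic graph in the $W'$ case, in which every internal vertex is incident to three bridges while every cap is fully coverable by the circuit through its vertices. Thus all $I$ internal vertices are forced uncovered and everything else is covered, giving $t(G)=I$ with $|V|=I+3L=4I+6$ (resp.\ $|V|=I+5L=6I+10$); hence $t(G)/|V|\to\frac14$ (resp.\ $\to\frac16$), matching both bounds in the limit and showing that neither constant can be lowered.
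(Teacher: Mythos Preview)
Your approach is sound and genuinely different from the paper's. The paper argues by minimal counterexample: it forces every end-block to be $W$ (resp.\ $W'$), shows that end-block roots form an independent set so that $G$ arises from a smaller cubic $G_0$ by subdividing some edges and attaching caps, and then splits at a carefully chosen non-trivial bridge, playing the inductive bound on one side against Lemma~\ref{lem:E2} on the other. You instead decompose $G$ in one shot along all its bridges and bound each $2$-edge-connected piece directly. The discharging you flag as ``the real work'' is in fact immediate, and you do not even need the full Edmonds polytope: Theorem~\ref{thm:Petersen}(a) already lets the perfect matching of the suppressed graph $P'$ avoid any two prescribed edges, so choosing the two suppressed paths carrying the most degree-$2$ vertices gives $u(P)\le\max(0,\,p_2(P)-2)$, with $u(P)=0$ when $P$ is a bare circuit. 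Feeding this and the leaf bound $|V(P)|\ge3$ (resp.\ $\ge5$) into the tree identity $L=2+d_3+\sum_{p_2(P)\ge2}(p_2(P)-2)$ yields
\[
\sum_P\bigl(|V(P)|-4u(P)\bigr)\ \ge\ 6+3d_3+\sum_{p_2(P)\ge2}\Bigl[3(p_2(P)-2)+|V(P)|-4u(P)\Bigr]\ \ge\ 6+3d_3\ >\ 3d_3,
\]
since each bracketed term is at least $p_3(P)+2\ge2$; the simple case is identical with $(4,3)$ replaced by $(6,5)$. Your route thus avoids the paper's induction and structural case analysis at the cost of a slightly heavier matching tool, and the tightness constructions coincide with the paper's.
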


\begin{proof} For the proof of statement $(i)$, assume that $G$ is a counterexample of minimum order. Observe that $t(G)\geq \frac{1}{4}|V(G)|$ implies $\frac{\ell(G)}{|V(G)|} \leq \frac34$ by Theorem \ref{thm:GallaiEqualityt(G)}. Clearly, $G$ is connected. By Theorem \ref{thm:Petersen}, we can assume that $G$ contains at least three bridges. We proceed by proving a series of intermediate statements that ultimately yield a contradiction, disproving the existence of such a counterexample.

\begin{claim}
    \label{claim:TreeEndblocks3} Every end-block of $G$ is isomorphic to $W$.
\end{claim}

\begin{proof} Suppose some end-block $B$ has $h\geq 5$ vertices. Define a smaller cubic graph $G'$ obtained from $G$ by removing $B$ from $G$ and attaching a copy of $W$ (Figure \ref{fig:claim1}). 

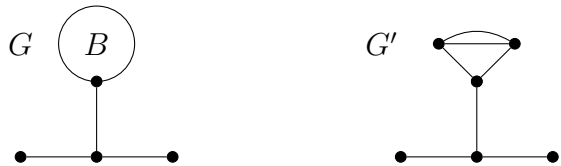
\begin{figure}[ht]
  
  \begin{center}

		\begin{tikzpicture}




            \draw (-10.5,-0.5) circle (0.5cm);
            \node at (-10.5, -0.5) {$B$};
            \node at (-11.5, -0.5) {$G$};
             \node at (-6.75, -0.5) {$G'$};

            \tikzstyle{every node}=[circle, draw, fill=black!50,
                        inner sep=0pt, minimum width=4pt]
																							
			\node[circle,fill=black,draw] at (-5.5,-1) (n1) {};
			
			
			
			

                \node[circle,fill=black,draw] at (-5.5,-1) (n1) {};
                \node[circle,fill=black,draw] at (-10.5,-1) (n11) {};
																								
			\node[circle,fill=black,draw] at (-6, -0.5) (n2) {};
																								
			\node[circle,fill=black,draw] at (-5,-0.5) (n3) {};

                \node[circle,fill=black,draw] at (-5.5,-2) (n4) {};
                \node[circle,fill=black,draw] at (-10.5,-2) (n44) {};

                \node[circle,fill=black,draw] at (-6.5,-2) (n5) {};
                \node[circle,fill=black,draw] at (-11.5,-2) (n55) {};
                
                \node[circle,fill=black,draw] at (-4.5,-2) (n6) {};
                \node[circle,fill=black,draw] at (-9.5,-2) (n66) {};


			\path[every node]
			(n1) edge  (n2)
                edge (n4)

			edge  (n3)
			
																								   	
			(n2) edge (n3)
			edge [bend left] (n3)
                (n4) edge (n5)
                (n4) edge (n6)

                (n11) edge (n44)  
                (n44) edge (n55)
                (n44) edge (n66)

			;
		\end{tikzpicture}
																
	\end{center}
								
	\caption{Obtaining the cubic graph $G'$ from $G$.}
	\label{fig:claim1}

\end{figure}

We have:
\[|V(G')|=|V(G)|-(h-3),\]
and
\[\ell(G')=\ell(G)-(h-3).\]
The latter follows from observation that the end-block has a 2-factor (see Corollary \ref{cor:Endblock2factor}), hence every maximum even subgraph covers all these $h$ vertices. Since $h\geq 5$, we have $|V(G')|<|V(G)|$. Thus,  by minimality of $G$, we have
\[\frac{\ell(G)}{|V(G)|}=\frac{\ell(G')+(h-3)}{|V(G')|+(h-3)}\geq \frac{\ell(G')}{|V(G')|}> \frac{3}{4}\]
contradicting that $G$ is a counterexample. Here we also used the fact that $\ell(G')\leq |V(G')|$.
The proof of Claim \ref{claim:TreeEndblocks3} is complete.
\end{proof}

\begin{claim}
    \label{claim:AdjacentEndBlocks} There is no vertex $w$ of $G$, such that two end-blocks of $G$ are joined to $w$ with two bridges. In other words, different end-blocks have different roots.
\end{claim}

\begin{proof} On the opposite assumption, assume that $w$ is incident to vertices $x$ and $y$ such that $x$ and $y$ lie in end-blocks (Figure \ref{fig:claim2}). By Claim \ref{claim:TreeEndblocks3}, these end-blocks have three vertices. Consider the cubic graph $H$ obtained from $G$ by removing these two end-blocks and attaching a copy of $W$ to $w$ (Figure \ref{fig:claim2}). 

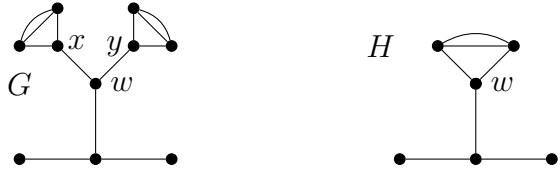
\begin{figure}[ht]
  
  \begin{center}

		\begin{tikzpicture}




            \node at (-11.5, -1) {$G$};
             \node at (-6.75, -0.5) {$H$};

             \node at (-10.15, -1) {$w$};
              \node at (-10.75, -0.45) {$x$};
              \node at (-10.25, -0.5) {$y$};
              
             \node at (-5.15, -1) {$w$};

            \tikzstyle{every node}=[circle, draw, fill=black!50,
                        inner sep=0pt, minimum width=4pt]
																							
			\node[circle,fill=black,draw] at (-5.5,-1) (n1) {};
			
			
			
			

                \node[circle,fill=black,draw] at (-5.5,-1) (n1) {};
                \node[circle,fill=black,draw] at (-10.5,-1) (n11) {};
                \node[circle,fill=black,draw] at (-10,-0.5) (n111) {};
                \node[circle,fill=black,draw] at (-10,0) (n1111) {};
                \node[circle,fill=black,draw] at (-9.5,-0.5) (n1112) {};
                
                \node[circle,fill=black,draw] at (-11,-0.5) (n112) {};
                \node[circle,fill=black,draw] at (-11.5,-0.5) (n1121) {};
                \node[circle,fill=black,draw] at (-11,0) (n1122) {};
                
			\node[circle,fill=black,draw] at (-6, -0.5) (n2) {};
																								
			\node[circle,fill=black,draw] at (-5,-0.5) (n3) {};

                \node[circle,fill=black,draw] at (-5.5,-2) (n4) {};
                \node[circle,fill=black,draw] at (-10.5,-2) (n44) {};

                \node[circle,fill=black,draw] at (-6.5,-2) (n5) {};
                \node[circle,fill=black,draw] at (-11.5,-2) (n55) {};
                
                \node[circle,fill=black,draw] at (-4.5,-2) (n6) {};
                \node[circle,fill=black,draw] at (-9.5,-2) (n66) {};


			\path[every node]
			(n1) edge  (n2)
                edge (n4)

			edge  (n3)
			
																								   	
			(n2) edge (n3)
			edge [bend left] (n3)
                (n4) edge (n5)
                (n4) edge (n6)

                (n11) edge (n44)  
                (n44) edge (n55)
                (n44) edge (n66)

                (n11) edge (n111)
                (n11) edge (n112)

                (n111) edge (n1111) 
                (n111) edge (n1112) 
                
                (n1111) edge (n1112)
                edge [bend left] (n1112)

                (n1121) edge (n1122)
                edge [bend left] (n1122)

                (n112) edge (n1121)
                edge (n1122)

			;
		\end{tikzpicture}
																
	\end{center}
								
	\caption{Obtaining the cubic graph $H$ from $G$.}
	\label{fig:claim2}

\end{figure}

We have
\[|V(G)|=|V(H)|+4,\]
and
\[\ell(G)=\ell(H)+3.\]
Since $|V(H)|<|V(G)|$, by minimality of $G$, we have
\[\frac{\ell(H)}{|V(H)|}> \frac{3}{4}.\]
Hence, 
\[\frac{\ell(G)}{|V(G)|}=\frac{\ell(H)+3}{|V(H)|+4}> \frac{3}{4},\]
which contradicts our choice of $G$ as a counterexample. The proof of Claim \ref{claim:AdjacentEndBlocks} is complete.
\end{proof}

Our next claim states that roots of end-blocks form an independent set.

\begin{claim}
    \label{claim:AdjacentRootsEndBlocks} There are no two roots $w_1$ and $w_2$ of end-blocks in $G$, such that $w_1w_2\in E(G)$.
\end{claim}

\begin{proof} Suppose two roots $w_1$ and $w_2$ are adjacent in $G$ (Figure \ref{fig:claim3}). 
First of all, let us observe that $w_1w_2$ cannot be a double edge since $G$ has more than two bridges. Moreover, by previous claim there are no two end-blocks of $G$ joined in $w_2$. Then, we denote by $w_3$ the neighbour of $w_2$ distinct from $w_1$ and such that $w_2w_3$ is not the bridge in an end-block.  

Consider the cubic graph $H$ obtained from $G$ by removing the end-block having $w_2$ as a root and adding a new edge $w_1w_3$ (Figure \ref{fig:claim3}). Note that $w_1w_3$ could be a double edge in $H$.

\begin{figure}[ht]
  
  \begin{center}

		\begin{tikzpicture}





             \node at (-9, -2) {$G$}; \node at (-0.5, -2) {$H$};

             \node at (-8, -2) {$w_1$}; \node at (-2.35, -1.9) {$w_1$};
             \node at (-5, -2) {$w_2$};
             \node at (-4,-3) {$w_3$}; \node at (0.35,-3) {$w_3$};
             \draw (-7.5,-2) -- (-9,-3) ;
             \draw (-5.5,-2) -- (-4.5,-3) ;
             \draw (-2,-2)--(-3.5,-3);

             \draw[dashed] (-4.5,-3) -- (-7.5,-2);

            \tikzstyle{every node}=[circle, draw, fill=black!50,
                        inner sep=0pt, minimum width=4pt]
																							
			
			
			
			

                \node[circle,fill=black,draw] at (-5.5,-1) (n1) {};
                \node[circle,fill=black,draw] at (-7.5,-1) (n11) {};
                \node[circle,fill=black,draw] at (-4.5,-3) (n111) {};

            \node[circle,fill=black,draw] at (-2,-2) (n1H) {};
            \node[circle,fill=black,draw] at (0,-3) (n3H) {};
            \node[circle,fill=black,draw] at (-2,-1) (w1H) {};
            \node[circle,fill=black,draw] at (-2.5,-0.5) (w11H) {};
            \node[circle,fill=black,draw] at (-1.5,-0.5) (w12H) {};

			\node[circle,fill=black,draw] at (-6, -0.5) (n2) {};
                \node[circle,fill=black,draw] at (-8, -0.5) (n22) {};
																								
			\node[circle,fill=black,draw] at (-5,-0.5) (n3) {};
                \node[circle,fill=black,draw] at (-7,-0.5) (n33) {};

                \node[circle,fill=black,draw] at (-5.5,-2) (n4) {};
                \node[circle,fill=black,draw] at (-7.5,-2) (n44) {};

                


			\path[every node]
                 (w1H) edge (w11H)
                  (w1H) edge (w12H)
                (w11H) edge (w12H)
                edge [bend left] (w12H)
                (w1H) edge (n1H)
                (n1H) edge (n3H)
			(n1) edge  (n2)
                edge (n4)

			edge  (n3)
			
																								   	
			(n2) edge (n3)
			edge [bend left] (n3)
                (n4) edge (n44)

                
				(n44) edge (n11)

                (n11) edge (n22)
                (n11) edge (n33)
                (n22) edge (n33)
                edge [bend left] (n33)

			;
		\end{tikzpicture}
																
	\end{center}
								
	\caption{Obtaining the cubic graph $H$ from $G$.}
	\label{fig:claim3}

\end{figure}
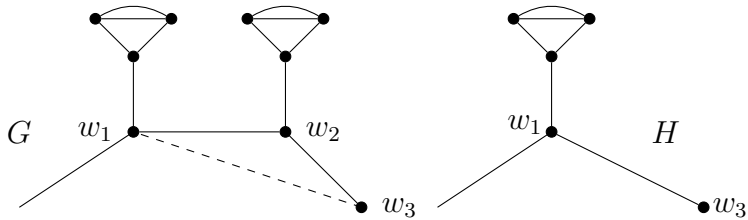

We have:
\[|V(G)|=|V(H)|+4,\]
and
\[\ell(G)\geq \ell(H)+3.\]
Since $|V(H)|<|V(G)|$, we have
\[\frac{\ell(H)}{|V(H)|}> \frac{3}{4}\]
by minimality of $G$. Hence
\[\frac{\ell(G)}{|V(G)|}\geq \frac{\ell(H)+3}{|V(H)|+4} > \frac{3}{4},\]
which contradicts our choice of $G$ as a counterexample. The proof of Claim \ref{claim:AdjacentRootsEndBlocks} is complete.
\end{proof}

%

The proved claims allow us to view our counterexample $G$ as one obtained from a cubic graph $G_0$ by taking a subset $E_0\subseteq E(G_0)$, subdividing edges of $E_0$ once and attaching a copy of $W$ to them. 

Let us say that a bridge $e$ of $G$ is trivial if \[\min\{|V(G_1)|, |V(G_2)|\}=3,\] where $G_1$ and $G_2$ denote the components of $G-e$.

\begin{claim}
    \label{claim:NonTrivialBridge} $G$ contains a non-trivial bridge.
\end{claim}

\begin{proof} Assume that all bridges in $G$ are trivial. Then the graph $G_0$ discussed above is a bridgeless cubic graph. By Lemma \ref{lem:BridgelessMatchingSubdivide},
\[t(G)\leq |E_0|\leq \frac{|V(G)|-|V(G_0)|}{4} < \frac{|V(G)|}{4},\]
since for every edge $e\in E_0$ we have four vertices in $G$. This contradicts the assumption that $G$ is a counterexample. The proof of Claim \ref{claim:NonTrivialBridge} is complete.
\end{proof}

We are ready to complete the proof of statement $(i)$. Let $f$ be a non-trivial bridge in $G$. Let $G_1$ and $G_2$ be the components of $G-f$. Consider two cubic graphs $H_1$ and $H_2$ obtained from $G_1$ and $G_2$, respectively, by attaching copies of $W$ to the end-vertices of $f$ (Figure \ref{fig:proofend}). 

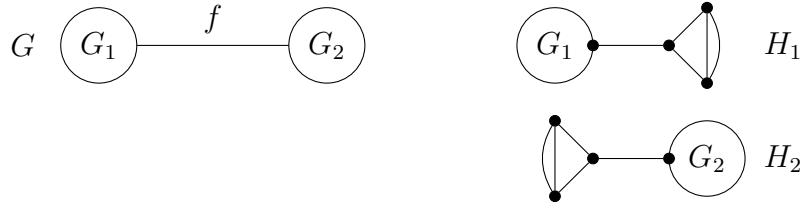
\begin{figure}[ht]
  
  \begin{center}

		\begin{tikzpicture}




            \draw (-10.5,-0.5) circle (0.5cm);
            \draw (-7.5,-0.5) circle (0.5cm);
            \node at (-10.5, -0.5) {$G_1$}; 
            \node at (-4.5, -0.5) {$G_1$}; 
            \draw (-4.5,-0.5) circle (0.5cm);

            \node at (-1.5, -0.5) {$H_1$};
            \node at (-1.5, -2) {$H_2$};
             \draw (-2.5,-2) circle (0.5cm);
             \node at (-2.5, -2) {$G_2$}; 
            
            \node at (-7.5, -0.5) {$G_2$};

            \node at (-11.5, -0.5) {$G$};
             \node at (-9, -0.15) {$f$};
            \draw (-10,-0.5)--(-8,-0.5);

            \tikzstyle{every node}=[circle, draw, fill=black!50,
                        inner sep=0pt, minimum width=4pt]
																							
			\node[circle,fill=black,draw] at (-4,-0.5) (n1) {};
                 \node[circle,fill=black,draw] at (-3,-0.5) (n2) {};
                 \node[circle,fill=black,draw] at (-2.5,0) (n21) {};
                 \node[circle,fill=black,draw] at (-2.5,-1) (n22) {};

                 \node[circle,fill=black,draw] at (-3,-2) (n1a) {};
                 \node[circle,fill=black,draw] at (-4,-2) (n2a) {};
                  \node[circle,fill=black,draw] at (-4.5,-1.5) (n21a) {};
                  \node[circle,fill=black,draw] at (-4.5,-2.5) (n22a) {};
			
			
			
			

																								
																								


                


			\path[every node]
			(n1) edge  (n2)
                (n2) edge (n21)
                (n2) edge (n22)
                (n21) edge (n22)
                edge [bend left] (n22)

                (n1a) edge (n2a)

                (n2a) edge (n21a)
                (n2a) edge (n22a)
                (n21a) edge (n22a)
                edge [bend right] (n22a)

			
																								   	

			;
		\end{tikzpicture}
																
	\end{center}
								
	\caption{Obtaining the cubic graphs $H_1$ and $H_2$ from $G$.}
	\label{fig:proofend}

\end{figure}

Since $f$ is non-trivial, we have $|V(H_1)|, |V(H_2)|<|V(G)|$. Note that we can always choose the non-trivial bridge $f$ such that

\begin{enumerate}
    \item [(a)] $H_1$ contains at least two end-blocks,

    \item [(b)] all bridges in $H_1$ are trivial.
\end{enumerate}

We have:
\[|V(H_1)|=|V(G_1)|+3,|V(H_2)|=|V(G_2)|+3,\]
and since $f$ is a bridge
\[\ell(H_1)=\ell(G_1)+3,\ell(H_2)=\ell(G_2)+3.\]
Since $|V(H_2)|<|V(G)|$, we have
\[\ell(G_2)+3=\ell(H_2)> \frac{3}{4}|V(H_2)|=\frac{3}{4} \left(|V(G_2)|+3\right)\]
by minimality of $G$. This is equivalent to
\begin{equation}
    \label{eq:G2} \ell(G_2)\geq \frac{3}{4} |V(G_2)|-\frac{3}{4}.
\end{equation} On the other hand, since in $H_1$ all bridges are trivial, by Lemma \ref{lem:E2} and minimality of $G$
\[t(H_1)\leq |E_0(H_1)|-2\leq \frac{|V(H_1)|}{4}-2<\frac{|V(H_1)|}{4}-\frac{3}{2}.\]
By Theorem \ref{thm:GallaiEqualityt(G)}, the latter means
\[\ell(H_1)>\frac{3|V(H_1)|}{4}+\frac{3}{2}.\]
Since $\ell(H_1)=\ell(G_1)+3$ and $|V(H_1)|=|V(G_1)|+3$, the last inequality is equivalent to
\begin{equation}
    \label{eq:G1} \ell(G_1)> \frac{3}{4}|V(G_1)|+\frac{3}{4}.
\end{equation} (\ref{eq:G2}) and (\ref{eq:G1}) together imply
\[\ell(G)=\ell(G_1)+\ell(G_2)> \frac{3}{4}|V(G_1)|+\frac{3}{4}+\frac{3}{4}|V(G_2)|-\frac{3}{4}=\frac{3}{4}|V(G)|\]
which contradicts our assumption that $G$ is a counterexample. In the last equation, we used equalities
\[|V(G)|=|V(G_1)|+|V(G_2)|,\]
and
\[\ell(G)=\ell(G_1)+\ell(G_2).\]
The latter follows from our choice of $f$ as a bridge. Proof of $(i)$ is complete.

\medskip

Now, we prove that the bound in $(i)$ is asymptotically tight. Let $T$ be a tree on $n$ vertices in which every vertex is of degree three or one. Let $k_1$ be the number of vertices with degree one, and let $k_3$ be the number of vertices of degree three. Attach a copy of $W$ to every vertex of degree one in $T$ so that we get a cubic graph $G$ (if $T=K_{1,3}$ is the claw, then $G=S_{10}$, see Figure \ref{fig:SylvGraph}).

By Proposition \ref{prop:31degreeTrees},
\[|V(G)|=k_3+3k_1=3k_1+(k_1-2)=4k_1-2=4\left(\frac{n}{2}+1\right)-2=2n+2.\]
Since $T$ is a tree, all its edges are bridges in $G$. Thus, these edges cannot lie on a cycle, hence on even subgraphs of $G$. Thus, in order to get an even subgraph, we can take just triangles in copies of $W$ attached to degree one vertices. Thus:
\[\ell(G)=3k_1.\]
Hence, by Theorem \ref{thm:GallaiEqualityt(G)}
\[t(G)=k_1-2=\left(\frac{n}{2}+1\right)-2=\frac{n}{2}-1.\]
Thus:
\[\lim_{n\rightarrow +\infty}\frac{t(G)}{|V(G)|}=\lim_{n\rightarrow +\infty}\frac{\frac{n}{2}-1}{2n+2}=\lim_{n\rightarrow +\infty}\frac{n-2}{4n+4}=\frac{1}{4}.\]

\medskip

The proof of point $ii)$ proceeds in exactly the same way as in point $i)$, replacing $W$ with $W'$. The only difference lies in Claim \ref{claim:AdjacentRootsEndBlocksSimple}, which corresponds to Claim \ref{claim:AdjacentRootsEndBlocks}, where an additional argument is required to ensure that no parallel edges are created. For this reason, we defer the detailed proof of this point to the Appendix.

\medskip

In order to prove that the bound in $(ii)$ is the best possible, again, we start with a tree $T$ in which all degrees are either one or three. Then we attach to every vertex of degree one a copy of $W'$. For example, if $T=K_{1,3}$ is the claw, then we get the cubic graph from Figure \ref{fig:S16}.

\begin{figure}[ht]
  \begin{center}
	
	\tikzstyle{every node}=[circle, draw, fill=black!50,
                        inner sep=0pt, minimum width=4pt]
	
		\begin{tikzpicture}
																							
			\node[circle,fill=black,draw] at (-5.5,-1) (n1) {};
			
			\node[circle,fill=black,draw] at (-6, -0.5) (n2) {};
			\node[circle,fill=black,draw] at (-6, 0.5) (n22) {};
																								
			\node[circle,fill=black,draw] at (-5,-0.5) (n3) {};
			\node[circle,fill=black,draw] at (-5,0.5) (n33) {};
																								
			\node[circle,fill=black,draw] at (-3.5,-1) (n4) {};

			\node[circle,fill=black,draw] at (-4, -0.5) (n5) {};
			\node[circle,fill=black,draw] at (-4, 0.5) (n55) {};
																								
			\node[circle,fill=black,draw] at (-3,-0.5) (n6) {};
			\node[circle,fill=black,draw] at (-3, 0.5) (n66) {};
																								
			\node[circle,fill=black,draw] at (-1.5,-1) (n7) {};
																								
			\node[circle,fill=black,draw] at (-2, -0.5) (n8) {};
			\node[circle,fill=black,draw] at (-2, 0.5) (n88) {};
																								
			\node[circle,fill=black,draw] at (-1,-0.5) (n9) {};
			\node[circle,fill=black,draw] at (-1, 0.5) (n99) {};
																								
			\node[circle,fill=black,draw] at (-3.5,-2) (n10) {};

			\path[every node]
			(n1) edge  (n2)

			edge  (n3)
			edge (n10)
																								   	
			(n2) edge (n22)
			    edge (n33)
																								       
			(n3) edge (n22)
			    edge (n33)
			(n4) edge (n5)
			edge (n6)
			edge (n10)
			
			(n22) edge (n33)
																								    
			(n5) edge (n55)
			    edge (n66)
			(n6) edge (n55)
			    edge (n66)
			    
			 (n55) edge (n66)
																								   
			(n7) edge (n8)
			edge (n9)
			edge (n10)
																								    
			(n8) edge (n88)
			    edge (n99)
		(n9) edge (n88)
		    edge (n99)
		   
		 (n88) edge (n99)
																								  
			;
		\end{tikzpicture}
																
	\end{center}
								
	\caption{The graph $S_{16}$.}
	\label{fig:S16}

\end{figure}
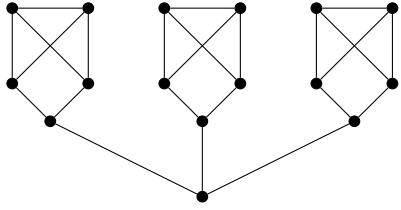

We will have:
\[|V(G)|=k_1-2+5k_1=6k_1-2=6\left(\frac{n}{2}+1\right)-2=3n+4,\]
and
\[\ell(G)=5k_1.\]
Thus, by Theorem \ref{thm:GallaiEqualityt(G)},
\[t(G)=k_1-2=\frac{n}{2}-1,\]
and
\[\lim_{n\rightarrow +\infty}\frac{t(G)}{|V(G)|}=\lim_{n\rightarrow +\infty}\frac{\frac{n}{2}-1}{3n+4}=\lim_{n\rightarrow +\infty}\frac{n-2}{6n+8}=\frac{1}{6}.\]
\end{proof}

\section{Computational complexity of introduced parameters}\label{sec:complexity}

In this section, we discuss the computational complexity of computing our parameters when on the input one is given a cubic graph. We start with the discussion of $t(G)$. 

If $J$ is a parity subgraph of a cubic graph $G$, then let $V_1(J)$ and $V_3(J)$ be the sets of vertices of degree one and three in $J$, respectively. Note that by Remark \ref{prop:ParitySubgraphCubic} and Theorem \ref{thm:GallaiEqualityt(G)},
\begin{equation}
    \label{eq:t(G)minimum} t(G)=\min_{J}|V_3(J)|.
\end{equation}
By counting the sum of degrees in $J$, we have
\[|V_1(J)|+3|V_3(J)|=2|E(J)|.\]
Taking into account that $J$ is a spanning subgraph, we have
\[|V_1(J)|+|V_3(J)|=|V(G)|.\]
Thus,
\[2|E(J)|=|V_1(J)|+3|V_3(J)|=|V(G)|+2|V_3(J)|,\]
or
\begin{equation}
    \label{eq:E(G)V3(G)} 2|E(J)|=|V(G)|+2|V_3(J)|.
\end{equation}
The problem of computing $t(G)$, by (\ref{eq:t(G)minimum}) is equivalent to minimization of $|V_3(J)|$. The latter is equivalent to minimization of $|E(J)|$ by (\ref{eq:E(G)V3(G)}). Thus, we need to understand the computational complexity of finding a parity subgraph with minimum number of edges in a given cubic graph $G$. The latter is polynomial time solvable as it is stated on pages 233--234 of \cite{Lovasz}. The authors of \cite{Lovasz} refer to \cite{EJ1973} as a source for the polynomial time solvability of the problem of finding a smallest parity subgraph in arbitrary graphs which may not be cubic. Let us note that in \cite{Lovasz}, the authors use the word ``join" in order to refer to parity subgraphs.

Finally, let us turn to the parameter $T(G)$ that we defined for all bridgeless cubic graphs. Note that $T(G)= 0$ is equivalent to the statement that $E(G)$ can be covered with four perfect matchings. The problem of checking this property is NP-complete in the class of bridgeless cubic graphs as it is proved in \cite{4CoverabilityNPhardness}. Note that the problem remains NP-complete in the class of cyclically 4-edge-connected cubic graphs as \cite{SkovConf4coverability} demonstrates. Recall that a connected cubic graph is cyclically 4-edge-connected if it does not contain 2-edge-cuts and all 3-edge-cuts in it are trivial.

\section*{Acknowledgement} Vahan Mkrtchyan would like to thank Hrant Khachatryan for suggesting him to work with trivial/non-trivial bridges in cubic graphs.

\bigskip



\bibliographystyle{elsarticle-num}



\section{Appendix}

For the proof of statement $(ii)$ of Theorem \ref{thm:t(G)upperboundTight}, assume that $G$ is a counterexample of minimum order. Observe that $t(G)\geq \frac{1}{6}|V(G)|$ implies $\frac{\ell(G)}{|V(G)|} \leq \frac{5}{6}$ by Theorem \ref{thm:GallaiEqualityt(G)}. Clearly, $G$ is connected. By Theorem \ref{thm:Petersen}, we can assume that $G$ contains at least three bridges. We proceed by proving a series of intermediate statements that ultimately yield a contradiction, disproving the existence of such a counterexample.

\begin{claimprime}
    \label{claim:TreeEndblocks3Simple} Every end-block of $G$ is isomorphic to $W'$.
\end{claimprime}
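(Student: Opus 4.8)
The plan is to mirror the proof of Claim \ref{claim:TreeEndblocks3}, systematically replacing the minimal (non-simple) end-block $W$ by its simple counterpart $W'$, and to argue by the minimality of the assumed counterexample $G$, which satisfies $\frac{\ell(G)}{|V(G)|} \le \frac{5}{6}$.

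First I would pin down the structure of a simple end-block $B$. If $y$ is its cut-vertex, then inside $B$ the vertex $y$ has degree $2$ and every other vertex has degree $3$, so a handshake count on $B$ shows that $|V(B)|$ is odd. Since a $3$-vertex block with degree sequence $(2,3,3)$ necessarily contains a parallel edge, the smallest admissible value for a simple $B$ is $|V(B)| = 5$. A short case analysis then shows that the unique simple $2$-connected graph on five vertices with degree sequence $(2,3,3,3,3)$ is $W'$ (one checks that the two neighbours of $y$ are forced to be non-adjacent, and the rest of the adjacencies are then determined, reproducing $K_4$ with one edge subdivided). Hence, if $B \not\cong W'$, then $|V(B)| = h \ge 7$. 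This identification of the minimal simple end-block, together with the resulting size threshold, is the only genuine point of departure from Claim \ref{claim:TreeEndblocks3}.

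Assuming for contradiction that some end-block $B$ with $h \ge 7$ is not isomorphic to $W'$, I would delete $B$ and attach a fresh copy of $W'$ to the root through a new bridge (the operation of Figure \ref{fig:claim1} with $W$ replaced by $W'$). Because this bridge runs to the brand-new degree-two vertex of $W'$, no parallel edge is introduced, so $G'$ is again a simple, connected, cubic graph with $|V(G')| = |V(G)| - (h-5) < |V(G)|$. Using Corollary \ref{cor:Endblock2factor}, every end-block carries a $2$-factor, and the connecting bridge is never used by a cycle; an exchange argument (replacing the restriction of a maximum even subgraph to the end-block by a $2$-factor) then shows that a maximum even subgraph restricts to a $2$-factor on the end-block. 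This yields the exact equality $\ell(G') = \ell(G) - (h-5)$.

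Finally, the minimality of $G$ forces $\frac{\ell(G')}{|V(G')|} > \frac{5}{6}$, and combining this with $\ell(G') \le |V(G')|$ and the mediant inequality gives
\[
\frac{\ell(G)}{|V(G)|} = \frac{\ell(G') + (h-5)}{|V(G')| + (h-5)} \ge \frac{\ell(G')}{|V(G')|} > \frac{5}{6},
\]
which contradicts $\frac{\ell(G)}{|V(G)|} \le \frac{5}{6}$. I expect the arithmetic to be routine; the step that requires actual care --- and where the simple case honestly differs from the multigraph case --- is verifying that $W'$ is the unique minimal simple end-block, since this is exactly what raises the reduction threshold from $h \ge 5$ to $h \ge 7$ and keeps the replacement within the class of simple graphs.
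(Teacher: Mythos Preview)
Your proposal is correct and follows essentially the same approach as the paper: assume an end-block $B$ with $h\ge 7$ vertices, replace it by a copy of $W'$, use Corollary \ref{cor:Endblock2factor} to obtain $\ell(G')=\ell(G)-(h-5)$, and derive a contradiction via the mediant inequality and minimality. The one place you go beyond the paper is in explicitly verifying that the unique simple end-block on five vertices is $W'$ (so that $B\not\cong W'$ forces $h\ge 7$); the paper simply asserts $h\ge 7$ without this check.
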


\begin{proof} Suppose some end-block $B$ has $h\geq 7$ vertices. Define a smaller simple cubic graph $G'$ obtained from $G$ by removing $B$ from $G$ and attaching a copy of $W'$ (Figure \ref{fig:claim1Simple}). 

\begin{figure}[ht]
  
  \begin{center}

		\begin{tikzpicture}




            \draw (-10.5,-0.5) circle (0.5cm);
            \node at (-10.5, -0.5) {$B$};
            \node at (-11.5, -0.5) {$G$};
             \node at (-6.75, -0.5) {$G'$};

            \tikzstyle{every node}=[circle, draw, fill=black!50,
                        inner sep=0pt, minimum width=4pt]
																							
			\node[circle,fill=black,draw] at (-5.5,-1) (n1) {};
			
			
			
			

                \node[circle,fill=black,draw] at (-5.5,-1) (n1) {};
                \node[circle,fill=black,draw] at (-10.5,-1) (n11) {};
																								
			\node[circle,fill=black,draw] at (-6, -0.5) (n2) {};
                \node[circle,fill=black,draw] at (-6, 0.5) (n22) {};
																								
			\node[circle,fill=black,draw] at (-5,-0.5) (n3) {};
                \node[circle,fill=black,draw] at (-5,0.5) (n33) {};

                \node[circle,fill=black,draw] at (-5.5,-2) (n4) {};
                \node[circle,fill=black,draw] at (-10.5,-2) (n44) {};

                \node[circle,fill=black,draw] at (-6.5,-2) (n5) {};
                \node[circle,fill=black,draw] at (-11.5,-2) (n55) {};
                
                \node[circle,fill=black,draw] at (-4.5,-2) (n6) {};
                \node[circle,fill=black,draw] at (-9.5,-2) (n66) {};


			\path[every node]
			(n1) edge  (n2)
                edge (n4)

			edge  (n3)
			
																								   	
                (n4) edge (n5)
                (n4) edge (n6)

                (n22) edge (n2)
                (n22) edge (n3)

                (n22) edge (n33)
                
                (n33) edge (n3)
                (n33) edge (n2)

                (n11) edge (n44)  
                (n44) edge (n55)
                (n44) edge (n66)

			;
		\end{tikzpicture}
																
	\end{center}
								
	\caption{Obtaining the cubic graph $G'$ from $G$.}
	\label{fig:claim1Simple}

\end{figure}
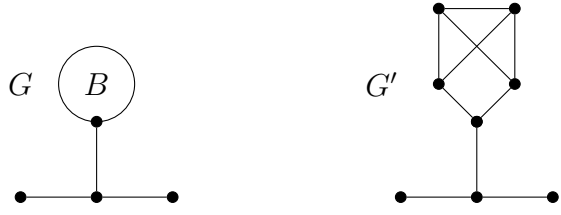

We have:
\[|V(G')|=|V(G)|-(h-5),\]
and
\[\ell(G')=\ell(G)-(h-5).\]
The latter follows from observation that the end-block has a 2-factor (see Corollary \ref{cor:Endblock2factor}), hence every maximum even subgraph covers all these $h$ vertices. Since $h\geq 7$, we have $|V(G')|<|V(G)|$. Thus,  by minimality of $G$, we have
\[\frac{\ell(G)}{|V(G)|}=\frac{\ell(G')+(h-5)}{|V(G')|+(h-5)}\geq \frac{\ell(G')}{|V(G')|}> \frac{5}{6}\]
contradicting that $G$ is a counterexample. Here we also used the fact that $\ell(G')\leq |V(G')|$.
The proof of Claim \ref{claim:TreeEndblocks3Simple} is complete.
\end{proof}

\begin{claimprime}
    \label{claim:AdjacentEndBlocksSimple} There is no vertex $w$ of $G$, such that two end-blocks of $G$ are joined to $w$ with two bridges. In other words, different end-blocks have different roots.
\end{claimprime}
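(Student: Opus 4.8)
The plan is to imitate the proof of Claim~\ref{claim:AdjacentEndBlocks} almost verbatim, replacing the end-block $W$ by $W'$ and the density threshold $\frac34$ by $\frac56$. Suppose for contradiction that some vertex $w$ is joined by two bridges to vertices $x$ and $y$ lying in two distinct end-blocks. By Claim~\ref{claim:TreeEndblocks3Simple} each of these end-blocks is a copy of $W'$, hence has five vertices. I would then build a smaller simple cubic graph $H$ by deleting both end-blocks and turning $w$ into the degree-two apex of a single fresh copy of $W'$: one adds the four non-apex vertices of $W'$ together with the two edges joining them to $w$, while keeping the third edge of $w$ to the remainder of the graph. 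Since the four added vertices are new, no parallel edge is introduced and $H$ is again simple; in contrast with the simple-graph analogue of Claim~\ref{claim:AdjacentRootsEndBlocks}, here no extra care is needed to avoid multiple edges.

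Next I would record the two bookkeeping identities. Removing two copies of $W'$ (ten vertices) and re-using $w$ as the apex of one new copy (four new vertices) gives
\[ |V(G)| = |V(H)| + 6. \]
For the even-subgraph count, the crucial local fact is that a maximum even subgraph of a single $W'$ block has length exactly $5$: the five-cycle through the apex is an even spanning subgraph of the block, and a short case check shows that no even subgraph of $W'$ uses six or more edges. In $G$ all three edges at $w$ are bridges, so $w$ lies on no cycle and is isolated in every even subgraph; hence the two $W'$ blocks contribute $5+5$ independently of the rest, whereas in $H$ the single $W'$ block (now containing $w$) contributes $5$, the remainder being unchanged because the edge from $w$ to it is a bridge in both graphs. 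This yields
\[ \ell(G) = \ell(H) + 5. \]

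Finally I would invoke minimality of the counterexample. Since $|V(H)| < |V(G)|$ we have $\ell(H)/|V(H)| > \frac56$, and the weighted-mediant estimate
\[ \frac{\ell(G)}{|V(G)|} = \frac{\ell(H)+5}{|V(H)|+6} > \frac56 \]
follows at once, because the inequality is equivalent to $6\ell(H) > 5|V(H)|$ (the increments $5$ and $6$ sit in exactly the ratio $5:6$). This contradicts the assumption that $G$ is a counterexample and completes the proof.

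The only genuinely new point compared with Claim~\ref{claim:AdjacentEndBlocks} is the determination that a maximum even subgraph of $W'$ has length $5$ rather than $3$, and that this value is attained using the apex so that it survives when $w$ plays that role in $H$. Since every constant in the argument is driven by this number, verifying it carefully is the step I expect to be the main obstacle; the vertex count and the final mediant inequality are then routine.
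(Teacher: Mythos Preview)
Your proposal is correct and follows essentially the same approach as the paper's proof: the construction of $H$, the counts $|V(G)|=|V(H)|+6$ and $\ell(G)=\ell(H)+5$, and the final mediant inequality all match exactly. You supply more justification than the paper does (the paper simply asserts $\ell(G)=\ell(H)+5$ without the detailed analysis of the even subgraphs of $W'$), but the argument is the same.
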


\begin{proof} On the opposite assumption, assume that $w$ is incident to vertices $x$ and $y$ such that $x$ and $y$ lie in end-blocks (Figure \ref{fig:claim2Simple}). By Claim \ref{claim:TreeEndblocks3Simple}, these end-blocks have five vertices. Consider the simple cubic graph $H$ obtained from $G$ by removing these two end-blocks and attaching a copy of $W'$ to $w$ (Figure \ref{fig:claim2Simple}). 

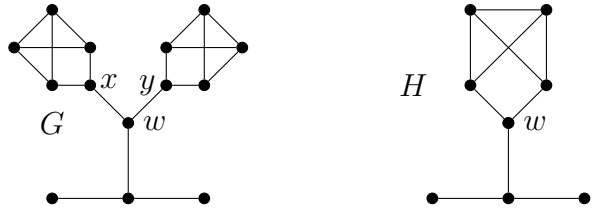
\begin{figure}[ht]
  
  \begin{center}

		\begin{tikzpicture}




            \node at (-11.5, -1) {$G$};
             \node at (-6.75, -0.5) {$H$};

             \node at (-10.15, -1) {$w$};
              \node at (-10.75, -0.45) {$x$};
              \node at (-10.25, -0.5) {$y$};
              
             \node at (-5.15, -1) {$w$};

            \tikzstyle{every node}=[circle, draw, fill=black!50,
                        inner sep=0pt, minimum width=4pt]
																							
			\node[circle,fill=black,draw] at (-5.5,-1) (n1) {};
			
			
			
			

                \node[circle,fill=black,draw] at (-5.5,-1) (n1) {};
                \node[circle,fill=black,draw] at (-10.5,-1) (n11) {};
                \node[circle,fill=black,draw] at (-10,-0.5) (n111) {};
                \node[circle,fill=black,draw] at (-10,0) (n1111) {};
                \node[circle,fill=black,draw] at (-9.5,-0.5) (n1112) {};

                \node[circle,fill=black,draw] at (-12, 0) (n11111) {};
                \node[circle,fill=black,draw] at (-11.5, 0.5) (n11112) {};

                \node[circle,fill=black,draw] at (-9, 0) (n11121) {};
                \node[circle,fill=black,draw] at (-9.5, 0.5) (n11122) {};
                
                \node[circle,fill=black,draw] at (-11,-0.5) (n112) {};
                \node[circle,fill=black,draw] at (-11.5,-0.5) (n1121) {};
                \node[circle,fill=black,draw] at (-11,0) (n1122) {};
                
			\node[circle,fill=black,draw] at (-6, -0.5) (n2) {};
                \node[circle,fill=black,draw] at (-6, 0.5) (n22) {};
																								
			\node[circle,fill=black,draw] at (-5,-0.5) (n3) {};
                \node[circle,fill=black,draw] at (-5, 0.5) (n33) {};

                \node[circle,fill=black,draw] at (-5.5,-2) (n4) {};
                \node[circle,fill=black,draw] at (-10.5,-2) (n44) {};

                \node[circle,fill=black,draw] at (-6.5,-2) (n5) {};
                \node[circle,fill=black,draw] at (-11.5,-2) (n55) {};
                
                \node[circle,fill=black,draw] at (-4.5,-2) (n6) {};
                \node[circle,fill=black,draw] at (-9.5,-2) (n66) {};


			\path[every node]
			(n1) edge  (n2)
                edge (n4)

			edge  (n3)
			
																								   	
            
                (n4) edge (n5)
                (n4) edge (n6)

                (n11) edge (n44)  
                (n44) edge (n55)
                (n44) edge (n66)

                (n11) edge (n111)
                (n11) edge (n112)

                (n111) edge (n1111) 
                (n111) edge (n1112) 
                


                (n112) edge (n1121)
                edge (n1122)


                (n11111) edge (n11112)
                        
                (n11111) edge (n1121)

                (n11112) edge (n1122)

                (n11121) edge (n11122)

                (n1111) edge (n11121)

                (n1112) edge (n11122)

                (n1112) edge (n11121)
                (n11122) edge (n1111)

                (n11112) edge (n1121)
                (n11111) edge (n1122)

                (n2) edge (n22)
                (n2) edge (n33)

                (n3) edge (n22)
                (n3) edge (n33)

                (n22) edge (n33)

			;
		\end{tikzpicture}
																
	\end{center}
								
	\caption{Obtaining the cubic graph $H$ from $G$.}
	\label{fig:claim2Simple}

\end{figure}

We have
\[|V(G)|=|V(H)|+6,\]
and
\[\ell(G)=\ell(H)+5.\]
Since $|V(H)|<|V(G)|$, by minimality of $G$, we have
\[\frac{\ell(H)}{|V(H)|}> \frac{5}{6}.\]
Hence, 
\[\frac{\ell(G)}{|V(G)|}=\frac{\ell(H)+5}{|V(H)|+6}> \frac{5}{6},\]
which contradicts our choice of $G$ as a counterexample. The proof of Claim \ref{claim:AdjacentEndBlocksSimple} is complete.
\end{proof}

Our next claim states that roots of end-blocks form an independent set.

\begin{claimprime}
    \label{claim:AdjacentRootsEndBlocksSimple} There are no two roots $w_1$ and $w_2$ of end-blocks in $G$, such that $w_1w_2\in E(G)$.
\end{claimprime}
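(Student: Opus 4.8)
The plan is to mirror the argument for Claim \ref{claim:AdjacentRootsEndBlocks}, but to work harder to keep the reduced graph simple. Suppose for contradiction that $w_1w_2\in E(G)$ with $w_1,w_2$ roots of end-blocks $B_1,B_2$; by Claim \ref{claim:TreeEndblocks3Simple} each $B_i$ is a copy of $W'$, and by Claim \ref{claim:AdjacentEndBlocksSimple} the vertex $w_2$ is the root of a single end-block $B_2$. Write $y_2$ for the cut-vertex of $B_2$ and let $w_3$ be the third neighbour of $w_2$ (the one distinct from $w_1$ and from $y_2$). The natural reduction is to delete $V(B_2)\cup\{w_2\}$ and suppress $w_2$ by inserting the edge $w_1w_3$, producing a cubic graph $H$ of smaller order.

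I would first treat the generic case, in which $w_1w_3\notin E(G)$ so that $H$ is again a simple cubic graph. Here $|V(G)|=|V(H)|+6$, since $B_2$ contributes five vertices and $w_2$ one more. Using that $W'$ has a Hamiltonian circuit of length $5$ confined to the block (so, by Corollary \ref{cor:Endblock2factor}, every maximum even subgraph meets $B_2$ in $5$ edges), I would argue $\ell(G)\ge\ell(H)+5$: a longest cycle of $H$ either avoids $w_1w_3$, in which case it survives in $G$, or uses $w_1w_3$, in which case it is rerouted along the path $w_1w_2w_3$; in both cases the $5$-circuit inside $B_2$ can be added disjointly. Minimality of $G$ gives $\ell(H)/|V(H)|>5/6$, whence
\[
\frac{\ell(G)}{|V(G)|}\ \ge\ \frac{\ell(H)+5}{|V(H)|+6}\ >\ \frac{5}{6},
\]
the last inequality because $6a>5b$ is equivalent to $6(a+5)>5(b+6)$. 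This contradicts the choice of $G$.

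The step I expect to be the real obstacle is exactly the case excluded above, $w_1w_3\in E(G)$, which is where simplicity genuinely bites and where the argument departs from Claim \ref{claim:AdjacentRootsEndBlocks}. Inspecting the neighbours $y_1,w_2,w_0$ of $w_1$ (where $y_1$ is the cut-vertex of $B_1$ and $w_0$ its third neighbour), one checks that $w_3$ can equal neither $y_1$ nor $w_2$, so $w_1w_3\in E(G)$ forces $w_3=w_0$. Then $\{w_1,w_2,w_3\}$ induces a triangle, and the set $R=\{w_1,w_2,w_3\}\cup V(B_1)\cup V(B_2)$ ($13$ vertices) is attached to the rest of $G$ through the single bridge $w_3z$, where $z$ is the third neighbour of $w_3$.

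In this situation I would not suppress $w_2$ but instead excise the whole branch $R$ and attach a fresh copy of $W'$ to $z$, obtaining a smaller simple cubic graph $H$ with $|V(G)|=|V(H)|+8$. The largest even subgraph inside $R$ is the disjoint union of the triangle $w_1w_2w_3$ and the two $5$-circuits of $B_1,B_2$, of total length $13$, while the new block contributes a single $5$-circuit; since the bridges $w_3z$, $w_1y_1$, $w_2y_2$ lie on no cycle, this yields $\ell(G)=\ell(H)+8$. Minimality again gives $\ell(H)/|V(H)|>5/6$, and adding $8$ to numerator and denominator (with $8/8=1>5/6$) preserves the strict inequality
\[
\frac{\ell(G)}{|V(G)|}=\frac{\ell(H)+8}{|V(H)|+8}>\frac{5}{6},
\]
the desired contradiction. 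The two cases together rule out adjacent roots, completing the claim. The crux is recognizing that the forbidden parallel edge forces the rigid triangle-plus-two-blocks branch, which can then be swapped wholesale for a single $W'$.
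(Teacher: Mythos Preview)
Your argument is correct. Case 1 is exactly the paper's Case 1. In Case 2 you take a genuinely different route: once $w_1w_3\in E(G)$ forces the triangle $K=\{w_1,w_2,w_3\}$, the paper simply contracts $K$ to obtain the simple cubic graph $H=G/K$, with $|V(G)|=|V(H)|+2$ and $\ell(G)=\ell(H)+3$ (all three edges leaving $K$ are bridges, so the only contribution of $K$ to a largest even subgraph is the triangle itself), and then argues $\tfrac{\ell(H)+3}{|V(H)+2}>\tfrac{\ell(H)}{|V(H)}>\tfrac{5}{6}$. You instead excise the whole $13$-vertex branch $R$ and re-attach a fresh $W'$ at $z$, giving $|V(G)|=|V(H)|+8$ and $\ell(G)=\ell(H)+8$. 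Both reductions are valid; the contraction is more economical and exploits that contracting a triangle in a simple cubic graph (with three distinct external neighbours, guaranteed here) stays simple, while your branch-replacement keeps the proof uniform with the other claims (everything is a $W'$-swap) and avoids having to analyse how even subgraphs interact with triangle contraction.
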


\begin{proof} Suppose two roots $w_1$ and $w_2$ are adjacent in $G$ (Figure \ref{fig:claim3Simple}). 
First of all, let us observe that $w_1w_2$ cannot be a double edge since $G$ has more than two bridges. Moreover, by previous claim there are no two end-blocks of $G$ joined in $w_2$. Then, we denote by $w_3$ the neighbour of $w_2$ distinct from $w_1$ and such that $w_2w_3$ is not the bridge in an end-block (Figure \ref{fig:claim3Simple}).

\begin{figure}[ht]
  
  \begin{center}

		\begin{tikzpicture}





             \node at (-9, -2) {$G$}; \node at (-0.5, -2) {$H$};

             \node at (-8, -2) {$w_1$}; \node at (-2.35, -1.9) {$w_1$};
             \node at (-5, -2) {$w_2$};
             \node at (-4,-3) {$w_3$}; \node at (0.35,-3) {$w_3$};
             \draw (-7.5,-2) -- (-9,-3) ;
             \draw (-5.5,-2) -- (-4.5,-3) ;
             \draw (-2,-2)--(-3.5,-3);

             \draw[dashed] (-4.5,-3) -- (-7.5,-2);

            \tikzstyle{every node}=[circle, draw, fill=black!50,
                        inner sep=0pt, minimum width=4pt]
																							
			
			
			
			

                \node[circle,fill=black,draw] at (-5.5,-1) (n1) {};
                \node[circle,fill=black,draw] at (-7.5,-1) (n11) {};
                \node[circle,fill=black,draw] at (-4.5,-3) (n111) {};

            \node[circle,fill=black,draw] at (-2,-2) (n1H) {};
            \node[circle,fill=black,draw] at (0,-3) (n3H) {};
            \node[circle,fill=black,draw] at (-2,-1) (w1H) {};
            \node[circle,fill=black,draw] at (-2.5, -0.5) (w11H) {};
             \node[circle,fill=black,draw] at (-2.5, 0.5) (w111H) {};
            
            \node[circle,fill=black,draw] at (-1.5,-0.5) (w12H) {};
            \node[circle,fill=black,draw] at (-1.5, 0.5) (w121H) {};

			\node[circle,fill=black,draw] at (-6, -0.5) (n2) {};
                \node[circle,fill=black,draw] at (-6, 0.5) (n21) {};
                
                \node[circle,fill=black,draw] at (-8, -0.5) (n22) {};
                 \node[circle,fill=black,draw] at (-8, 0.5) (n221) {};
																								
			\node[circle,fill=black,draw] at (-5,-0.5) (n3) {};
            \node[circle,fill=black,draw] at (-5, 0.5) (n31) {};
                \node[circle,fill=black,draw] at (-7,-0.5) (n33) {};
                \node[circle,fill=black,draw] at (-7, 0.5) (n331) {};

                \node[circle,fill=black,draw] at (-5.5,-2) (n4) {};
                \node[circle,fill=black,draw] at (-7.5,-2) (n44) {};

                


			\path[every node]
                 (w1H) edge (w11H)
                  (w1H) edge (w12H)
                
                
                (w1H) edge (n1H)
                (n1H) edge (n3H)
			(n1) edge  (n2)
                edge (n4)

			edge  (n3)

                (n2) edge (n21)
                (n2) edge (n31)
                (n3) edge (n21)
                (n3) edge (n31)
                (n21) edge (n31)
			
																								   	
                (n4) edge (n44)

                
				(n44) edge (n11)

                (n11) edge (n22)
                (n11) edge (n33)

                (w11H) edge (w111H)
                (w11H) edge (w121H)
                (w12H) edge (w111H)
                (w12H) edge (w121H)
                (w111H) edge (w121H)

                (n22) edge (n221)
                (n22) edge (n331)
                (n33) edge (n221)
                (n33) edge (n331)
                (n221) edge (n331)

			;
		\end{tikzpicture}
																
	\end{center}
								
	\caption{Obtaining the cubic graph $H$ from $G$.}
	\label{fig:claim3Simple}

\end{figure}
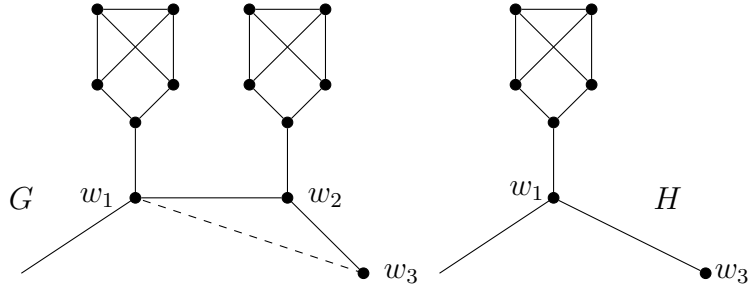

We consider two cases.

Case 1: $w_1w_3\notin E(G)$. Consider the cubic graph $H$ obtained from $G$ by removing the end-block having $w_2$ as a root and adding a new edge $w_1w_3$ (Figure \ref{fig:claim3Simple}). Note that $w_1w_3$ is not a double edge in $H$. Hence, $H$ is a simple cubic graph.

We have:
\[|V(G)|=|V(H)|+6,\]
and
\[\ell(G)\geq \ell(H)+5.\]
Since $|V(H)|<|V(G)|$, we have
\[\frac{\ell(H)}{|V(H)|}> \frac{5}{6}\]
by minimality of $G$. Hence
\[\frac{\ell(G)}{|V(G)|}\geq \frac{\ell(H)+5}{|V(H)|+6} > \frac{5}{6},\]
which contradicts our choice of $G$ as a counterexample. 

Case 2: $w_1w_3\in E(G)$. Note that $w_1, w_2, w_3$ form a triangle $K$ in $G$. Moreover, $H=G/K$ is a simple cubic graph with $|V(H)|<|V(G)|$. Thus,
\[\frac{\ell(H)}{|V(H)|}>\frac{5}{6}.\]
Since $|V(G)|=|V(H)|+2$ and $\ell(G)=\ell(H)+3$, we get
\[\frac{\ell(G)}{|V(G)|}=\frac{\ell(H)+3}{|V(H)|+2}>\frac{\ell(H)+2}{|V(H)|+2}\geq \frac{\ell(H)}{|V(H)|}>\frac{5}{6}.\]
We used the trivial inequality $\ell(H)\leq |V(H)|$ above. The proof of Claim \ref{claim:AdjacentRootsEndBlocksSimple} is complete.
\end{proof}

The proved claims allow us to view our counterexample $G$ as one obtained from a cubic graph $G_0$ by taking a subset $E_0\subseteq E(G_0)$, subdividing edges of $E_0$ once and attaching a copy of $W'$ to them. 

Let us say that a bridge $e$ of $G$ is trivial if \[\min\{|V(G_1)|, |V(G_2)|\}=5,\] where $G_1$ and $G_2$ denote the components of $G-e$.

\begin{claimprime}
    \label{claim:NonTrivialBridgeSimple} $G$ contains a non-trivial bridge.
\end{claimprime}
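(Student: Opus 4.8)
The plan is to mirror the proof of Claim \ref{claim:NonTrivialBridge} from the non-simple case, adjusting the vertex bookkeeping to reflect that each attached copy of $W'$ carries five vertices rather than three. I would argue by contradiction: suppose every bridge of $G$ is trivial, that is, each bridge cuts off exactly a copy of $W'$ (five vertices). The gadget bridges are precisely the edges joining each $W'$-copy to its root, and these are exactly the trivial ones. Hence, under the assumption that all bridges are trivial, the reduced graph $G_0$ (obtained by deleting all the $W'$ end-blocks and undoing the subdivisions) must be bridgeless; a surviving bridge of $G_0$ would appear in $G$ with substantially more than five vertices on each side, contradicting triviality.

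With $G_0$ bridgeless, I would invoke the $W'$-analogue of Lemma \ref{lem:BridgelessMatchingSubdivide}. Taking a perfect matching $M$ of $G_0$ and expanding to triangles only the roots of those $W'$-gadgets sitting on edges of $E_0\cap M$, one builds a perfect matching of the expanded graph: along each gadget the unique degree-two vertex of $W'$ is matched across its bridge, and the four remaining vertices of the $K_4$-part are matched internally (for instance by the pair $ac,bd$). This yields $t(G)\le |E_0|$. Since each edge of $E_0$ is replaced in $G$ by a subdivision vertex together with the five vertices of $W'$, we have $|V(G)|=|V(G_0)|+6|E_0|$, and therefore
\[
t(G)\le |E_0| = \frac{|V(G)|-|V(G_0)|}{6} < \frac{|V(G)|}{6},
\]
using $|V(G_0)|>0$. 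This contradicts the assumption that $G$ is a counterexample, completing the proof.

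The main obstacle is not the counting, which is routine, but securing the $W'$-version of Lemma \ref{lem:BridgelessMatchingSubdivide}. One must verify that the matching construction used for $W$ still closes up for $W'$: the crucial point is that $W'$ has \emph{five} vertices, so its degree-two vertex cannot be matched internally and is forced to use the connecting bridge, after which the residual $K_4$-part (which is missing the subdivision vertex, so that $a$ and $b$ are non-adjacent) still admits a perfect matching on its four vertices. Checking this matching argument, together with confirming that triviality of all bridges genuinely forces $G_0$ to be bridgeless, are the two points requiring care; everything else transfers verbatim from the non-simple argument, with the threshold $\tfrac14$ and the count $4$ replaced by $\tfrac16$ and $6$.
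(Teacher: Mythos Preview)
Your proposal is correct and follows essentially the same route as the paper: assume all bridges are trivial, deduce that $G_0$ is bridgeless, apply the $W'$-analogue of Lemma~\ref{lem:BridgelessMatchingSubdivide} to get $t(G)\le |E_0|=\tfrac{|V(G)|-|V(G_0)|}{6}<\tfrac{|V(G)|}{6}$, and reach a contradiction. Your extra paragraph justifying the $W'$-version of the lemma (matching the degree-two vertex of $W'$ across the bridge and then pairing the remaining four $K_4$-vertices via $ac,bd$) is sound and in fact supplies more detail than the paper, which simply cites Lemma~\ref{lem:BridgelessMatchingSubdivide} and relies on the blanket remark that the argument for $(ii)$ proceeds ``in exactly the same way as in point $(i)$, replacing $W$ with $W'$.''
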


\begin{proof} Assume that all bridges in $G$ are trivial. Then the graph $G_0$ discussed above is a bridgeless cubic graph. By Lemma \ref{lem:BridgelessMatchingSubdivide},
\[t(G)\leq |E_0|\leq \frac{|V(G)|-|V(G_0)|}{6} < \frac{|V(G)|}{6},\]
since for every edge $e\in E_0$ we have six vertices in $G$. This contradicts the assumption that $G$ is a counterexample. The proof of Claim \ref{claim:NonTrivialBridgeSimple} is complete.
\end{proof}

We are ready to complete the proof of statement $(ii)$. Let $f$ be a non-trivial bridge in $G$. Let $G_1$ and $G_2$ be the components of $G-f$. Consider two simple cubic graphs $H_1$ and $H_2$ obtained from $G_1$ and $G_2$, respectively, by attaching copies of $W'$ to the end-vertices of $f$ (Figure \ref{fig:proofendSimple}). 

\begin{figure}[ht]
  
  \begin{center}

		\begin{tikzpicture}




            \draw (-10.5,-0.5) circle (0.5cm);
            \draw (-7.5,-0.5) circle (0.5cm);
            \node at (-10.5, -0.5) {$G_1$}; 
            \node at (-4.5, -0.5) {$G_1$}; 
            \draw (-4.5,-0.5) circle (0.5cm);

            \node at (-1, -0.5) {$H_1$};
            \node at (-1, -2) {$H_2$};
             \draw (-2.5,-2) circle (0.5cm);
             \node at (-2.5, -2) {$G_2$}; 
            
            \node at (-7.5, -0.5) {$G_2$};

            \node at (-11.5, -0.5) {$G$};
             \node at (-9, -0.15) {$f$};
            \draw (-10,-0.5)--(-8,-0.5);

            \tikzstyle{every node}=[circle, draw, fill=black!50,
                        inner sep=0pt, minimum width=4pt]
																							
			\node[circle,fill=black,draw] at (-4,-0.5) (n1) {};
                 \node[circle,fill=black,draw] at (-3,-0.5) (n2) {};
                 \node[circle,fill=black,draw] at (-2.5,0) (n21) {};
                 \node[circle,fill=black,draw] at (-1.5,0) (n211) {};
                 
                 \node[circle,fill=black,draw] at (-2.5,-1) (n22) {};
                 \node[circle,fill=black,draw] at (-1.5,-1) (n221) {};

                 \node[circle,fill=black,draw] at (-3,-2) (n1a) {};
                 \node[circle,fill=black,draw] at (-4,-2) (n2a) {};
                  \node[circle,fill=black,draw] at (-4.5,-1.5) (n21a) {};
                  \node[circle,fill=black,draw] at (-5.5,-1.5) (n21b) {};
                  
                  \node[circle,fill=black,draw] at (-4.5,-2.5) (n22a) {};
                  \node[circle,fill=black,draw] at (-5.5,-2.5) (n22b) {};
			
			
			
			

																								
																								


                


			\path[every node]
			(n1) edge  (n2)
                (n2) edge (n21)
                (n2) edge (n22)

                (n1a) edge (n2a)

                (n2a) edge (n21a)
                (n2a) edge (n22a)

			
																								   	

                (n21) edge (n211)
                (n21) edge (n221)
                (n22) edge (n211)
                (n22) edge (n221)
                (n211) edge (n221)

                (n21a) edge (n21b)
                (n21a) edge (n22b)
                (n22a) edge (n21b)
                (n22a) edge (n22b)
                (n21b) edge (n22b)

			;
		\end{tikzpicture}
																
	\end{center}
								
	\caption{Obtaining the cubic graphs $H_1$ and $H_2$ from $G$.}
	\label{fig:proofendSimple}

\end{figure}
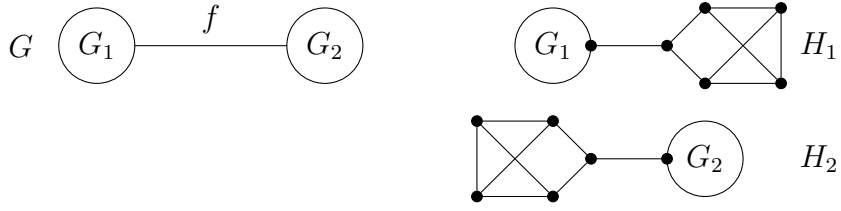

Since $f$ is non-trivial, we have $|V(H_1)|, |V(H_2)|<|V(G)|$. Note that we can always choose the non-trivial bridge $f$ such that

\begin{enumerate}
    \item [(a)] $H_1$ contains at least two end-blocks,

    \item [(b)] all bridges in $H_1$ are trivial.
\end{enumerate}

We have:
\[|V(H_1)|=|V(G_1)|+5,|V(H_2)|=|V(G_2)|+5,\]
and since $f$ is a bridge
\[\ell(H_1)=\ell(G_1)+5,\ell(H_2)=\ell(G_2)+5.\]
Since $|V(H_2)|<|V(G)|$, we have
\[\ell(G_2)+5=\ell(H_2)> \frac{5}{6}|V(H_2)|=\frac{5}{6} \left(|V(G_2)|+5\right)\]
by minimality of $G$. This is equivalent to
\begin{equation}
    \label{eq:G2Simple} \ell(G_2)> \frac{5}{6} |V(G_2)|-\frac{5}{6}.
\end{equation} On the other hand, since in $H_1$ all bridges are trivial, by Lemma \ref{lem:E2} and minimality of $G$
\[t(H_1)\leq |E_0(H_1)|-2\leq \frac{|V(H_1)|}{6}-2<\frac{|V(H_1)|}{6}-\frac{5}{3}.\]
By Theorem \ref{thm:GallaiEqualityt(G)}, the latter means
\[\ell(H_1) >\frac{5|V(H_1)|}{6}+\frac{5}{3}.\]
Since $\ell(H_1)=\ell(G_1)+5$ and $|V(H_1)|=|V(G_1)|+5$, the last inequality is equivalent to
\begin{equation}
    \label{eq:G1Simple} \ell(G_1) >  \frac{5}{6}|V(G_1)|+\frac{5}{6}.
\end{equation} (\ref{eq:G2Simple}) and (\ref{eq:G1Simple}) together imply
\[\ell(G)=\ell(G_1)+\ell(G_2)> \frac{5}{6}|V(G_1)|+\frac{5}{6}+\frac{5}{6}|V(G_2)|-\frac{5}{6}=\frac{5}{6}|V(G)|\]
which contradicts our assumption that $G$ is a counterexample. In the last equation, we used equalities
\[|V(G)|=|V(G_1)|+|V(G_2)|,\]
and
\[\ell(G)=\ell(G_1)+\ell(G_2).\]
The latter follows from our choice of $f$ as a bridge. Proof of $(ii)$ is complete.

\end{document}